\theoremstyle{plain}
\newtheorem{theorem}{Theorem}[section]
\newtheorem{corollary}[theorem]{Corollary}
\newtheorem{lemma}[theorem]{Lemma}
\newtheorem{proposition}[theorem]{Proposition}
\newtheorem*{claim*}{Claim}
\newtheorem*{problem*}{Problem}
\newtheorem*{conjecture*}{Conjecture}
\theoremstyle{definition}
\newtheorem{definition}[theorem]{Definition}
\newtheorem{example}[theorem]{Example}
\newtheorem{remark}[theorem]{Remark}
\newtheorem{question}[theorem]{Question}
\newcommand\al{\alpha}
\newcommand\bt{\beta}
\newcommand\gm{\gamma}
\newcommand\dl{\delta}
\newcommand\lm{\lambda}
\newcommand\sg{\sigma}
\newcommand\cF{\mathcal{F}}
\newcommand\cJ{\mathcal{J}}
\newcommand\cM{\mathcal{M}}
\newcommand\la{\langle}
\newcommand\ra{\rangle}
\newcommand\lla{\langle\!\langle}
\newcommand\rra{\rangle\!\rangle}
\newcommand\ad{\mathrm{ad}}
\newcommand{\Sym}{\mathrm{Sym}}
\newcommand{\ch}{\mathrm{char}}
\newcommand\N{\mathbb{N}}
\newcommand\Z{\mathbb{Z}}
\newcommand\FF{\mathbb{F}}
\newcommand\Aut{\mathrm{Aut}}
\newcommand{\A}{\mathrm{A}}
\newcommand{\B}{\mathrm{B}}
\newcommand{\C}{\mathrm{C}}
\newcommand{\J}{\mathrm{J}}
\newcommand{\Y}{\mathrm{Y}}
\newcommand{\IY}{\mathrm{IY}}
\renewcommand{\L}{\mathrm{L}}
\newcommand{\Miy}{\mathrm{Miy}}
\newcommand{\AMiy}{\mathrm{AMiy}}
\newcommand{\Cl}{\widehat{S}(2)^\circ}
\newcommand{\cH}{\mathcal{H}}
\newcommand{\hatH}{\hat \cH}
\newcommand{\0}{\overline{0}}
\newcommand{\2}{\overline{2}}
\newcommand{\ii}{\overline{\imath}}
\newcommand{\jj}{\overline{\jmath}}
\renewcommand{\r}{\overline{r}}
\renewcommand{\t}{\overline{t}}
\newcommand{\sgn}{\mathrm{sgn}}
\newcommand{\im}{\mathrm{Im}}
\renewcommand{\phi}{\varphi}
\renewcommand{\epsilon}{\varepsilon}
\newcommand{\1}{{\bf 1}}
\newcommand{\ie}{i.e.\ }
\setlist[enumerate,1]{label={\upshape (\arabic*)}}
\setlist[enumerate,2]{label={\upshape (\alph*)}}
\setlist[enumerate,3]{label={\upshape (\roman*)}}
\newcolumntype{C}[1]{>{\centering\arraybackslash}m{#1}}
\newcolumntype{Y}{>{\centering\arraybackslash}X}
\title{From forbidden configurations to a classification of some axial algebras of Monster type}
\author{J.~M\textsuperscript{c}Inroy\footnote{Department of Physical, Mathematics and Engineering Sciences, University of Chester, Exton Park, Parkgate Rd, Chester, CH1 4BJ, UK, and School of Mathematics, University of Bristol, Fry Building, Woodland Road, Bristol, BS8 1UG, UK, email: j.mcinroy@chester.ac.uk}
 \and
 S.~Shpectorov\footnote{School of Mathematics, University of Birmingham, Edgbaston, Birmingham, B15 2TT, UK, email: S.Shpectorov@bham.ac.uk}}
\date{}
\begin{document}
\maketitle

\begin{abstract}
Ivanov introduced the shape of a Majorana algebra as a record of the $2$-generated subalgebras arising in that algebra.  As a broad generalisation of this concept and to free it from the ambient algebra, we introduce the concept of an axet and shapes on an axet.  A shape can be viewed as an algebra version of a group amalgam. Just like an amalgam, a shape leads to a unique algebra completion which may be non-trivial or it may collapse. Then for a natural family of shapes of generalised Monster type we classify all completion algebras and discover that a great majority of them collapse, confirming the observations made in an earlier paper \cite{3gen4trans}.
\end{abstract}

\section{Introduction}

Axial algebras are a class of non-associative algebras which have a strong link to groups.  The motivating example is the Griess algebra whose axial behaviour was noticed in the context of vertex operator algebras arising in quantum physics.  Recently examples of axial algebras have been found in other areas of mathematics and beyond.  One such example is the class of Hsiang algebras in the theory of non-linear PDEs \cite{Tkachev}. Axial behaviour is also speculated by Fox in the algebras of vector flows on manifolds \cite{Fox}.

An axial algebra is generated by a set of special elements called axes. One way to describe the structure of an axial algebra is by its \emph{shape} which identifies for each pair of axes the $2$-generated subalgebra they generate. The Miyamoto group, which is a group of automorphisms naturally associated with the algebra, acts by permuting the set of axes $X$, and conjugate pairs of axes must generate isomorphic subalgebras. So the shape is consistent with respect to the action of the Miyamoto group. On the other hand, given a group $G$ acting on a set $X$ and a consistent choice $\cal S$ of subalgebras to pairs of elements from $X$, we can ask whether there is some algebra with $X$ as axes, $G$ as its Miyamoto group, and shape $\cal S$.  This is analogous to group amalgams where an amalgam is a configuration of groups and we ask whether there is a group (called a completion) which contains this configuration.

Shapes were first introduced for Majorana algebras by Ivanov in \cite{IPSS}. Initially, Majorana algebras were constructed for small hand-picked groups of interest and there would normally be one algebra for each shape. However, in a recent project \cite{3gen4trans}, where we systematically looked at all groups in a naturally defined class and all possible shapes, it transpired that the overwhelming majority of shapes collapse. That is, there is no non-trivial axial algebra with that shape. In fact, there we had over $11,000$ shapes and only about a hundred did not collapse. In \cite{3gen4trans}, we proposed a practical solution for how to eliminate collapsing shapes quickly. Namely, we identified a number of specific small collapsing shapes that frequently appear as subshapes of larger shapes, causing those larger shapes to collapse too. We call these small collapsing shapes \emph{forbidden configurations}.

This revelation means that the structure of an axial algebra for a given shape is far more restricted than one might otherwise think. So we have the two following theoretical problems, the first of which was posed in \cite{3gen4trans}.

\begin{problem*}
\begin{enumerate}
\item What natural conditions or additional axioms can we impose on the shapes to ensure that they lead to non-trivial algebras?
\item For a given shape, can we classify the non-trivial axial algebras with this shape?
\end{enumerate}
\end{problem*}

In this paper, we set this programme of investigation in motion. We have two main themes. First we introduce axets which axiomatise the notion of a set of axes together with the Miyamoto group. This allows us to formalise shapes and talk about their completions in a rigorous way. Secondly, we generalise some small collapsing shapes found in \cite{3gen4trans} specifically for algebras of type $\cM(\frac{1}{4}, \frac{1}{32})$ to whole families of shapes for arbitrary algebras of Monster type $\cM(\al, \bt)$. Furthermore, we classify all the completions, showing that these are non-trivial only in very specific exceptional situations. Our result is the first ever classification of a family of axial algebras of Monster type $\cM(\al,\bt)$ outside the $2$-generated case.

As already mentioned, axets abstract the notion of a set of axes. In an axial algebra, \emph{axes} are semisimple idempotents whose eigenvectors multiply according to a so-called \emph{fusion law}. Whenever this fusion law is $T$-graded by a group $T$, we have a natural map $\tau\colon X \times T^* \to \Aut(A)$ (the Miyamoto map) which associates to each axis $a \in X$ a subgroup of automorphisms $T_a = \{\tau_a(\chi) : \chi \in T^* \}$.  (Here $T^*$ is the group of linear characters of $T$.) The Miyamoto group $\Miy(X)$ is generated by all the $T_a$, $a\in X$. In the algebra, automorphisms take axes to axes and so the Miyamoto group $\Miy(X)$ acts on $X$.

We introduce an \emph{$S$-axet} as a $G$-set $X$, together with a Miyamoto map $\tau\colon X \times S \to G$, for some abelian group $S$, which satisfies analogous properties to that of a set of axes.  The Miyamoto group of the axet is the subgroup of $G$ generated by the image of $\tau$.  So every axial algebra $A$ involves an axet $X(A)$ given by the action of $\Miy(X)$ on the set of axes $X$, but now we can talk about an axet without an ambient axial algebra.

In addition to introducing axets, we also introduce \emph{morphisms} between axets, \ie natural maps.  Thus, we define the category $S$-$\mathbf{Axet}$ of axets.  In particular, this gives us isomorphisms of axets and so we can classify them up to isomorphism.  We note that our concept of isomorphism of axets does not require an isomorphism between the corresponding groups $G$.  Essentially, two axets are isomorphic if the action of their Miyamoto groups is the same.

We also develop the basic theory for general morphisms of axets, including subaxets, factor axets and versions of the First Isomorphism Theorem and Correspondence Theorem for axets.

\medskip
We are particularly interested in axial algebras with a fusion law which is $C_2$-graded (such as the fusion law for the Griess algebra and the generalised Monster fusion law $\cM(\al, \bt)$), so here we focus on $C_2$-axets.  One example of a $C_2$-axet is $X(n)$ of size $n$ (we allow $n = \infty$) whose $G$-action is given by the action of the dihedral group $D_{2n}$ on the $n$-gon.  We identify an entirely new example $X'(3k)$ given by gluing two (dual) copies of $X(2k)$, one of which has been `folded'.  This is isomorphic to a factor axet of $X(4k)$.

Even though an axet is just a combinatorial group-theoretic object, it still carries much information about axial algebras.  In particular, we have the following.

\begin{theorem}
A $2$-generated $C_2$-axet is isomorphic to either
\begin{enumerate}
\item $X(n)$, or
\item $X'(n)$, where $n = 3k$, $k \in \mathbb{N}$.
\end{enumerate}
\end{theorem}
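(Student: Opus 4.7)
The plan is to extract a rigidity statement: for a $2$-generated $C_2$-axet, the Miyamoto group is forced to be dihedral and generated by the two Miyamoto involutions attached to the generating axes. Let $a$ and $b$ generate $X$ and put $t = \tau_a$, $s = \tau_b$, $\rho = ts$, and $H = \langle t, s\rangle$. The axet compatibility $\tau_{g(x)} = g\tau_x g^{-1}$ guarantees that the Miyamoto involution attached to any axis in the subaxet generated by $\{a,b\}$ is an $H$-conjugate of $t$ or $s$ and hence already lies in $H$. Therefore $\Miy(X) = H$, which is dihedral of order $2n$ for $n := |\rho| \in \N \cup \{\infty\}$, and as a set $X = H\cdot a \cup H\cdot b$.

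Next I would introduce the $\Z$-indexed sequence $a_0 = a$, $a_1 = b$, and $a_{i+1} = \tau_{a_i}(a_{i-1})$ for all $i \in \Z$. This sequence exhausts $X$, giving a surjection $\pi\colon \Z \to X$, $i \mapsto a_i$. Along the sequence one has $\tau_{a_{i+2}} = \rho\, \tau_{a_i}\, \rho^{-1}$ and hence $\tau_{a_{i+2n}} = \tau_{a_i}$, so every fibre of $\pi$ is a union of cosets of $2n\Z$ and is stable under a natural $H$-action on the index set. The classification of $2$-generated $C_2$-axets then reduces to the classification of the possible fibre patterns of $\pi$.

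The classification splits into two cases. In the \emph{unfolded} case, the only identifications are $a_i = a_{i+2n}$, possibly together with the single-orbit identification $a_i = a_{-i+c}$, and this recovers $X(n)$. In the \emph{folded} case some additional identification $a_i = a_j$ occurs with $i-j$ outside the forced relations. Such an equality forces $\tau_{a_i} = \tau_{a_j}$ in $H$, and $H$-equivariance propagates the identification across the whole sequence. A case analysis then shows that the only internally consistent identification pattern corresponds to $n$ being a multiple of $3$, say $n = 3k$, in which case the resulting axet is isomorphic to the $X'(3k)$ of the paper; this last identification can either be checked directly or, equivalently, via the factor-axet description through $X(4k)$.

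The main obstacle is the combinatorial analysis of the folded case. One has to rule out \emph{a priori} several types of periodicity (period $2$, period $4$ and various partial identifications) using simultaneously the shift constraint from $\rho$ and the reflection constraints from $t$ and $s$. A useful device is to pass to the quotient $H/\langle \rho\rangle \cong C_2$, grouping axes by the class of their associated Miyamoto involution and tracking how the possible identifications interact with this $C_2$-grading; the divisibility condition $3 \mid n$ should emerge as the unique solution of the resulting consistency system. The infinite case $n = \infty$ has to be handled separately, but there the absence of finite-order relations precludes folding and forces $X \cong X(\infty)$.
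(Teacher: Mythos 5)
Your overall strategy (reduce to the core, observe that $\Miy(X)=\langle\tau_a,\tau_b\rangle$ is dihedral, and parametrise $X$ by the $\Z$-indexed walk $a_{i+1}=\tau_{a_i}(a_{i-1})$) is sound and is a legitimate alternative to the paper's parametrisation, which instead analyses the conjugacy classes of involutions in $D_{2m}$ and the possible orbit lengths directly. However, there is a genuine gap: the entire content of the theorem sits in the ``folded case'' analysis, and you explicitly defer it (``the main obstacle\dots'', ``should emerge as the unique solution''). No argument is actually given for why only one non-trivial folding pattern survives, and this is precisely the step the paper spends most of its proof on (showing that when $m$ is even the two orbits are forced to be disjoint of lengths in $\{m,\tfrac m2\}$, that both orbits having length $\tfrac m2$ contradicts faithfulness because $(\tau_a\tau_b)^{m/2}$ would act trivially, and that the mixed case $\tfrac m2$ plus $m$ is exactly $X'(\tfrac32 m)$).

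Moreover, the conclusion you predict for the folded case is not correct as stated. You set $n:=|\rho|$ with $\rho=\tau_a\tau_b$ and claim the folding forces $3\mid n$ with resulting axet $X'(3k)$ for $n=3k$. But $\Miy(X'(3k))\cong D_{4k}$, so for the skew axet one has $|\rho|=2k$: the actual constraint on $|\rho|$ is that it is \emph{even}, one orbit folds to half-length, and the \emph{number of axes} $\tfrac32|\rho|=3k$ is what is divisible by $3$. Conflating $|\rho|$ with the axis count means the ``consistency system'' you propose to solve would not in fact have $3\mid|\rho|$ as its solution, which suggests the combinatorics has not been worked through. Finally, the degenerate cases where one or both of $\tau_a,\tau_b$ are trivial (giving $X(2)$ and $X'(3)$, the latter with $\Miy\cong C_2$) fall outside your ``dihedral of order $2n$'' framework and need separate treatment, as the paper provides.
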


In $X(n)$, there is either one orbit of axes, or two orbits of axes each of length $\tfrac{n}{2}$.  However, in $X'(3k)$, there are two orbits, one of length $k$ and the other of length $2k$.  We call the first type \emph{regular} and the second \emph{skew}.  Almost all $2$-generated axial algebras currently known have a regular axet.  However, very recently and after the initial version of this article appeared on the arXiv, the first examples of algebras with a skew axet (for $k=1$ only) have been found by Turner.

\begin{question}
Do there exist $2$-generated axial algebras of Monster type $\cM(\al, \bt)$ with a skew axet $X'(3k)$ for $k>1$?  Can one classify all such algebras?
\end{question}

A $2$-generated axial algebra $A$, generated by axes $a$ and $b$, is \emph{symmetric} if there exists an involutory automorphism $\phi$ of $A$ switching $a$ and $b$.  In the symmetric case, the orbits of $a$ and $b$ under the action of the Miyamoto group have equal size and so the axet must be regular.  The symmetric $2$-generated algebras of Monster type have been classified recently by Yabe \cite{yabe}, with some cases completed by Franchi and Mainardis \cite{highwater5} and by Franchi, Mainardis and M\textsuperscript{c}Inroy in \cite{HWquo}.  By the above, a skew axet can only occur in the non-symmetric case and the classification of such $2$-generated algebras of Monster type remains an open problem.

In this paper, we are interested in classifying shapes on small axets.  So it is natural to first consider $3$-generated axets.  However, even this is too broad without assuming extra conditions (indeed, any normal set of involutions in a group leads to a $C_2$-axet and there are infinitely many groups generated by three involutions).  We explore $3$-generated axets $X$ with one orbit under its Miyamoto group being a single axis $\{a \}$ and the remainder $X-\{ a\}$ forming a second orbit.  We find two families of examples and prove the following.

\begin{proposition}
Let $X$ be a $3$-generated axet with two orbits under its Miyamoto group $\{ a \}$ and $X-\{a \}$.  Then $X$ is isomorphic to either
\begin{enumerate}
\item $X_1(1+n)$, $n \geq 3$ finite and odd, or
\item $X_2(1+n)$, where $n = 2k$ and $k$ odd.
\end{enumerate}
\end{proposition}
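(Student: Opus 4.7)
\emph{Proof plan.} Let $G = \Miy(X)$ and $Y := X - \{a\}$ of size $n$. Since $\{a\}$ is a $G$-orbit, $G$ fixes $a$, hence $\tau_a = 1$, and $G$ is generated by the conjugacy class $T := \{\tau_b : b \in Y\}$ of involutions (equivariance of $\tau$ plus $G$-transitivity on $Y$ gives a single class). The strategy is to bootstrap from Theorem 1, applied to all $2$-generated subaxets of $X$.

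The first step is to analyse the subaxets $\langle a, b\rangle$ for $b \in Y$. Since $a$ is fixed by $G$, it must form an orbit of size $1$ in any subaxet containing it, and the list in Theorem 1 then collapses to two possibilities: a trivial instance of $X(m)$ in which $\tau_b$ fixes both $a$ and $b$ (the \emph{regular} case), or $\langle a, b\rangle \cong X'(3) = \{a, b, b'\}$ with $\tau_b$ fixing $a$ and swapping $b$ with a unique partner $b' \in Y$ (the \emph{skew} case). By $G$-equivariance of $\tau$ and $G$-transitivity on $Y$, which regime holds is determined globally by $X$, producing a dichotomy.

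The second step is to globalise. In the regular case, each $\tau_b$ fixes $b$; applying Theorem 1 to $\langle b, c\rangle$ for $b, c \in Y$ forces these subaxets to be of dihedral type $X(m)$ disjoint from $a$, and together with transitivity on $Y$ this forces $G$ to act on $Y$ as a dihedral group on the vertices of an $n$-gon. The requirement that $Y$ is a single $G$-orbit then forces $n$ to be odd (with $n \geq 3$ required for genuine $3$-generation), giving the first family $X_1(1+n)$. In the skew case, the pairing $b \mapsto b'$ is a $G$-equivariant fixed-point-free involution of $Y$, so $n = 2k$; a further analysis of $\langle b, c\rangle$ with $c \notin \{b, b'\}$ yields the parity condition $k$ odd, producing $X_2(1+n)$.

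The main obstacle will be this last parity argument, namely ruling out even $k$ in the skew case. The expected mechanism is that an even $k$ would produce a second $G$-invariant partition of $Y$ beyond the pairing $b \mapsto b'$, splitting $Y$ into two orbits and contradicting the hypothesis (or else forcing $X$ to be $2$-generated, contradicting the $3$-generation assumption). Existence of both families, and verification that they realise the stated orbit structure, is a direct construction handled separately.
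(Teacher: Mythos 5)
There is a genuine gap, and it occurs in your very first line. You assert that because $\{a\}$ is a $G$-orbit, ``$G$ fixes $a$, hence $\tau_a = 1$.'' This implication is false: axiom (1) of Definition \ref{axet} only requires $\tau_a \in G_a$, and since $G_a = G$ here, $\tau_a$ is merely constrained to be a central element of $G$ (Lemma \ref{axet basics}(2)); it can and does act nontrivially on $X - \{a\}$. Indeed the second family $X_2(1+n)$ in the statement is precisely the case $\tau_a \neq 1$: there $\tau_a$ is the $180^\circ$ rotation of the $n$-gon, which fuses the two $\la \tau_b, \tau_c \ra$-orbits on $Y$ into a single $\Miy(X)$-orbit. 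The paper's proof is organised around exactly the dichotomy $\tau_a = 1$ versus $\tau_a \neq 1$, which your opening move erases. Your later description of the ``skew case'' then compensates by an impossible mechanism: you say $\tau_b$ ``swaps $b$ with a unique partner $b'$,'' but $\tau_b \in G_b$ must fix $b$, so it cannot move $b$ anywhere. In the subaxet $\la a,b \ra \cong X'(3)$ it is $\tau_a$ that swaps $b$ and $b'$ while $\tau_b = \tau_{b'} = 1$; your two paragraphs are therefore mutually inconsistent, since the skew case requires $\tau_a \neq 1$, which you ruled out at the outset.

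The regular half of your argument ($\tau_a = 1$, forcing $Y \cong X(n)$ with $n$ odd by transitivity) does match the paper's case 1. For the skew half, the correct route is: $\tau_a$ is a nontrivial central involution, so $\la \tau_b, \tau_c \ra$ has index at most $2$ in $\Miy(X)$ and hence has one or two equal-length orbits on $Y$; a single orbit is impossible because the centraliser of $\Miy(X(n))$, $n$ odd, in the symmetric group on $Y$ is trivial, forcing $\tau_a=1$; with two orbits, $Y \cong X(2k)$ and $\tau_a$ must swap the two halves of the $2k$-gon while commuting with $D_{2k}$, i.e.\ it is the rotation $i \mapsto i+k$, which interchanges the two parity classes if and only if $k$ is odd. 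Your guessed mechanism for ruling out even $k$ (that the pairing $b \mapsto b'$ would then preserve a $G$-invariant splitting of $Y$, contradicting transitivity) is the right intuition, but as written it is only a sketch, and it cannot be completed until the roles of $\tau_a$ and $\tau_b$ are corrected.
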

Of these, the first family is regular and the second is skew.

Having laid the foundations by defining axets, we can now define shapes.  Roughly speaking, for a fusion law $\cF$, a shape $\Theta$ on an axet $X$ is a coherent assignment of a $2$-generated $\cF$-axial algebra $A_Y$ to every $2$-generated subaxet $Y \subset X$. Hence shapes are similar to group amalgams.

A \emph{morphism} of shapes is a morphism of the underlying axets which induces an algebra homomorphism on the algebras $A_Y$ in a consistent way.  Thus we also get a category of shapes.  We say that an algebra $A$ is a \emph{completion} of $\Theta$ if there is a surjective morphism of shapes from $\Theta$ to the shape $\Theta(A)$ of the algebra $A$.  A shape \emph{collapses} if it does not have any completions.

Our second main theme is classifying shapes, particularly those which collapse. We are most interested in the generalised Monster fusion law $\cM(\al, \bt)$, which is $C_2$-graded.  All previous collapsing examples were obtained computationally as individual ad-hoc examples for the $\cM(\frac{1}{4},\frac{1}{32})$ fusion law (which the Griess algebra satisfies), whereas here we exhibit families of collapsing examples in the class of $\cM(\al, \bt)$-axial algebras.

We begin by considering a $3$-generated axet $X$ with a fixed axis $\{ a \}$ as described above.  Since almost all known $2$-generated algebras for $\cM(\al,\bt)$ are regular, we only consider $X_1(1+n)$, which is regular.  As $\{ a\}$ is a single orbit, $\tau_a$ acts trivially on $X$ and so $a$ has no $\bt$-part in any completion $A$.  This means we can define an additional automorphism $\sg_a$ which inverts the $\al$-eigenspace of $a$.  By studying the action of $\sg_a$ on any completion $A$, we prove a powerful reduction statement (Proposition \ref{1+nshapeprop}).  Roughly speaking, we have two cases: either $\sg_a=1$, in which case we get a direct sum of algebras, or $\sg_a \neq 1$.  In this latter case, we show that we have a $2$-generated subalgebra $B$ with axet $X(2n)$, which is symmetric and in almost all cases $B = A$.  Moreover, $a \in B$ and $\sg_a$ switches the two orbits of axes in $B$.  We call this property (J).

Combining our reduction result and the classification of symmetric $2$-generated $\cM(\al, \bt)$-axial algebras, we are left only to check which of these $2$-generated algebras have a finite axet and which have property (J).  It is known in folklore that algebras of Jordan type $\bt=\frac{1}{2}$ (which are also algebras of Monster type $(\al,\frac{1}{2})$) generically have an infinite axet, but it can be finite for some parameters.  We make this precise in Subsection \ref{sec:jordanaxet}.  Since Yabe's classification is very recent, such behaviour for other families has not been studied yet; we determine in Section \ref{sec:2genalbt} the axets arising in the symmetric algebras. This is done case by case and the resulting statements are quite technical. We mention here a consequence that is quite unexpected\footnote{In a sense, this already follows from results from \cite{splitspin} and \cite{HWquo} (see Theorem \ref{HWideals}), but it was not stated in this form.}.

\begin{theorem}
For any $n \in \N \cup \{ \infty\}$, there exists a symmetric $2$-generated $\cM(\al, \bt)$-axial algebra with axet $X(n)$.
\end{theorem}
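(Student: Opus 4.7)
The plan is to combine Yabe's classification of symmetric $2$-generated $\cM(\al,\bt)$-axial algebras (as completed by Franchi--Mainardis and Franchi--Mainardis--M\textsuperscript{c}Inroy) with the case-by-case determination of axets carried out in Section \ref{sec:2genalbt}. Since the statement is quantified over every $n\in\N\cup\{\infty\}$, I would proceed by exhibition: for each $n$ I point to one concrete symmetric $2$-generated algebra whose axet is precisely $X(n)$. For $n=\infty$, the Highwater algebra $\cH$ is the natural witness, since its axes are indexed by $\Z$ and its Miyamoto group acts as the infinite dihedral group, making the axet manifestly $X(\infty)$.

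For finite $n$, the main input is Theorem \ref{HWideals} from \cite{HWquo}, which describes the ideal structure of $\cH$ and its cover $\hatH$. Quotienting by the appropriate ideal collapses the $\Z$-indexed orbit of axes onto a finite orbit, producing an infinite family of symmetric algebras with axet $X(n)$. Residues of $n$ not hit by Highwater quotients, together with small values of $n$, I would cover using the split spin factor algebras of \cite{splitspin} and the Jordan-type $\bt=\tfrac{1}{2}$ algebras analysed in Section \ref{sec:jordanaxet}; the degenerate cases $n=1,2$ are handled trivially by the one-dimensional algebra spanned by a single idempotent and the two-dimensional algebra spanned by a pair of orthogonal idempotents.

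The main obstacle is bookkeeping: verifying that the union of axet sizes realised by (i) Highwater quotients, (ii) split spin factor algebras, (iii) Jordan-type algebras and (iv) the small ad hoc examples truly exhausts $\N\cup\{\infty\}$ with no value omitted. For each candidate one must check that the algebra is non-trivial, is generated by two axes, admits the required involutory swap, and has axet exactly $X(n)$ rather than a proper factor axet arising from accidental coincidences among the Miyamoto translates of an axis. This is essentially a systematic recasting of the existing results of \cite{splitspin} and \cite{HWquo}, now expressed uniformly in the language of axets, which is why the footnote presents the statement as a known but previously unformulated consequence rather than a new construction.
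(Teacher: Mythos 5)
Your proposal is correct and follows essentially the same route as the paper, which justifies this theorem (via the footnote) by appealing to Theorem \ref{HWideals}: the Highwater algebra $\cH$ itself realises $X(\infty)$, and its universal quotients $\cH_n$ realise $X(n)$ for every finite $n$, so no supplementary families are actually needed. Your hedge about ``residues of $n$ not hit by Highwater quotients'' is unnecessary, since Theorem \ref{HWideals}(1) already covers all $n\in\N$ (with the degenerate cases $n=1,2$ included as the $1$- and $3$-dimensional quotients).
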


Most of the families of algebras of Monster type $\cM(\al, \bt)$ have a fixed finite axet.  However, two of the families generically have an infinite axet, but we list all values of the parameter where the axet is finite.  The result for quotients of the Highwater algebra follows from \cite{HWquo}.  We explore other properties of these algebras in Section \ref{sec:2genalbt}, including property (J).

Writing $\cJ^2(\al)$ for the set of $2$-generated axial algebras of Jordan type $\al$, we have the following (for a more precise version see Theorem \ref{1+nshape}).

\begin{theorem}
Let $X = \la a, b, c \ra \cong X_1(1+n)$, $n \geq 3$ odd, be a $3$-generated axet, where $a$ is the fixed axis, and let $Y = \la a,b \ra$ and $Z = X - \{ a\}$.  Suppose that $\Theta$ is a shape on $X$ for $\cM(\al, \bt)$ and $A$ is a completion.  Then
\begin{enumerate}
\item If $\sg_a = 1$, then $A_Y \cong 2\B$ and $A = 1\A \oplus A_Z$.
\item If $\sg_a \neq 1$, then $A_Y \in \cJ^2(\alpha) - \{ 2\B\}$ and either
\begin{enumerate}
\item $A$ belongs to one of five families of symmetric $2$-generated algebra with axet $X(2n)$, or
\item $(\al, \bt) = (\frac{1}{2},2)$ and $A \cong \mathrm{Bar}_{0,1}(\frac{1}{2},2)$.
\end{enumerate}
\end{enumerate}
\end{theorem}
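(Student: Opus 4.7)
The plan is to use Proposition \ref{1+nshapeprop} as the structural backbone and then to sift through the classification of symmetric $2$-generated $\cM(\al,\bt)$-axial algebras. First I would apply Proposition \ref{1+nshapeprop} to $A$ to obtain the dichotomy based on whether $\sg_a = 1$. In case (1), the proposition directly yields the direct sum decomposition $A = 1\A \oplus A_Z$, with $a$ lying in the $1\A$-summand and orthogonal to $b \in A_Z$; it follows that $A_Y$ is generated by two orthogonal idempotents and hence $A_Y \cong 2\B$.

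For case (2) I would invoke the second half of Proposition \ref{1+nshapeprop} to produce the symmetric $2$-generated subalgebra $B \subseteq A$ with axet $X(2n)$, containing $a$, on which $\sg_a$ swaps the two orbits (property (J)), together with the information that $A_Y \in \cJ^2(\al) - \{2\B\}$. In the generic scenario $B = A$, so $A$ is itself a symmetric $2$-generated algebra of Monster type. I would then combine this with Yabe's classification (and the Highwater completions from \cite{highwater5, HWquo}) and the axet analysis in Section \ref{sec:2genalbt} to extract exactly those symmetric algebras whose axet is $X(2n)$ for some odd $n \geq 3$ and which satisfy property (J). The claim is that this filtering yields precisely the five families of subcase (a).

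The exceptional subcase (b), where $(\al,\bt) = (\frac{1}{2},2)$ and $A \cong \mathrm{Bar}_{0,1}(\frac{1}{2},2)$, should correspond to the ``almost all cases'' caveat of Proposition \ref{1+nshapeprop}, where $B$ is a proper subalgebra of $A$; this requires a direct verification that at these parameters $\mathrm{Bar}_{0,1}(\frac{1}{2},2)$ really is a completion of the shape on $X_1(1+n)$ and that no other completions arise. The main obstacle will be the bookkeeping involved in sifting through Yabe's classification: several families depend on continuous parameters, and the axet is only finite at special values — most delicately for the Jordan-type $\bt = \tfrac{1}{2}$ family and for the Highwater quotients, whose axets are generically infinite. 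Fortunately the computations of Section \ref{sec:2genalbt} reduce this to a finite check on each family, after which one only has to verify property (J) in the surviving cases to pin down the exact list.
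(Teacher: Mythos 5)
Your proposal follows essentially the same route as the paper: apply Proposition \ref{1+nshapeprop} to split into the $\sg_a=1$ and $\sg_a\neq 1$ cases, then filter the symmetric $2$-generated algebras of Section \ref{sec:2genalbt} by the requirements of axet $X(2n)$ and property (J) to obtain the five families, treating the exceptional subcases separately. The only step you leave schematic --- ruling out all candidates except $\mathrm{Bar}_{0,1}(\frac{1}{2},2)$ in the exceptional situation --- is handled in the paper by observing that there $\al=\frac{1}{2}$ forces $\bt\neq\frac{1}{2}$, which cuts the list of possible $B$ down to the three $X(6)$ families, and then the isomorphism type of $\lla b,b'\rra$ eliminates everything except $6\Y(\frac{1}{2},2)^\times$, from which $A$ is reconstructed as the $5$-dimensional algebra $\mathrm{Bar}_{0,1}(\frac{1}{2},2)$.
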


In Theorem \ref{1+nshape}, we give the full list of possible algebras for $A_Z$ in case $1$ and the five families in case $2$, as well as a description of the $3$-generated exception $\mathrm{Bar}_{0,1}(\frac{1}{2},2)$, which is a baric algebra.

The structure of the paper is as follows.  In Section \ref{sec:background}, we briefly review the background for axial algebras and their Miyamoto groups and discuss the structure of their ideals.  The first main topic of this paper, axets, is introduced in Section \ref{sec:axet}.  We focus on $C_2$-axets, classifying $2$-generated axets and $3$-generated axets with a fixed axis under the action of the Miyamoto group. In Section \ref{sec:shapes}, we introduce shapes for axets.  We review $2$-generated axial algebras of Jordan type and $\cM(\al, \bt)$ type in Section \ref{sec:2gen}.  In particular, some of the details we need about their  ideals, quotients and axets have not appeared before.  Our reduction theorem for shapes on $X_1(1+n)$ is the subject of Section \ref{sec:reduction}.  We explore $2$-generated axial algebras of Monster type $\cM(\al,\bt)$ in Section \ref{sec:2genalbt}, in particular classifying their axets and whether they have property (J).  Finally, in Section \ref{sec:forbidden}, we combine these to prove our main theorem.

\medskip

The work of the second author was partially supported by the Mathematical Center in Akademgorodok under agreement No.\ 075-15-2019-1675 with the Ministry of Science and Higher Education of the Russian Federation. 

\medskip

We would like to thank the anonymous referee for their very useful comments.

%%%%%%%%%%%%%%%%%%%%%%%%%%%%%%%%

\section{Background}\label{sec:background}

\subsection{Axial algebras}

We give brief details here introducing axial algebras; for a full account see \cite{axialstructure}.

\begin{definition}
Let $\mathbb{F}$ be a field.  A \emph{fusion law} is a set $\cF \subset \mathbb{F}$ together with a symmetric binary map $\star \colon \cF \times \cF \to 2^\cF$.
\end{definition}

We represent a fusion law in a table, similar to a group multiplication table.  In this, we drop the set notation in each cell and simply list the elements of $\lm \star \mu$, for $\lm, \mu \in \cF$.

\begin{figure}[!ht]
\begin{center}
\begin{minipage}[t]{0.25\linewidth}
\renewcommand{\arraystretch}{1.5}
\begin{tabular}[t]{c||c|c|c}
 & $1$ & $0$ & $\eta$ \\
\hline\hline
$1$ & $1$ & & $\eta$ \\
\hline
$0$ & & $0$ & $\eta$ \\
\hline 
$\eta$ & $\eta$ & $\eta$ & $1,0$
\end{tabular}
\end{minipage}
\begin{minipage}[t]{0.33\linewidth}
\renewcommand{\arraystretch}{1.5}
\begin{tabular}[t]{c||c|c|c|c}
 & $1$ & $0$ & $\alpha$ & $\beta$ \\
\hline\hline
$1$ & $1$ & & $\alpha$ & $\beta$ \\
\hline
$0$ & & $0$ & $\alpha$ & $\beta$ \\
\hline 
$\alpha$ & $\alpha$ & $\alpha$ & $1,0$ & $\beta$ \\
\hline 
$\beta$ & $\beta$ & $\beta$ & $\beta$ & $1,0,\alpha$
\end{tabular}
\end{minipage}
\end{center}
\caption{Fusion laws $\cJ(\eta)$, and $\cM(\al,\bt)$.}
\label{fusion laws}
\end{figure}

Let $A$ be a commutative algebra over $\mathbb{F}$ and $a \in A$.  By $\ad_a$ we denote the adjoint map with respect to $a$ and by $A_\lambda(a)$ we denote the $\lambda$-eigenspace of $\ad_a$ (if $\lm$ is not an eigenvalue, then $A_\lm = 0$).  We will also write $A_S(a) = \bigoplus_{\lambda \in S} A_\lambda(a)$ where $S \subset \FF$.

\begin{definition}
Let $\cF$ be a fusion law.  A commutative algebra $A$ over $\mathbb{F}$ together with a distinguished subset of elements $X$ is an $\cF$-axial algebra if
\begin{enumerate}
\item $A$ is generated by $X$;
\item for each $a \in X$,
\begin{enumerate}
\item $a$ is an idempotent;
\item $a$ is semisimple, namely $A = A_\cF = \bigoplus_{\lambda \in \cF} A_\lambda(a)$;
\item for all $\lambda, \mu \in \cF$, $A_\lambda(a) A_\mu(a) \subseteq A_{\lambda \star \mu}(a)$.
\end{enumerate}
\end{enumerate}
\end{definition}

The elements of $X$ are called \emph{$\cF$-axes} and for $2(c)$ we say that $a$ satisfies the fusion law $\cF$.  Where $\cF$ is obvious from context, we drop it and just say axes and axial algebra.  Similarly, if $a$ is obvious, we write $A_\lambda$ for $A_\lambda(a)$.  Where $X$ is understood, we will simply refer to $A$ as an axial algebra with $X$ being implicit.

We are particularly interested in axial algebras with the fusion laws in Figure \ref{fusion laws}.  We say an axial algebra is of \emph{Jordan type $\eta$} if it has the Jordan fusion law $\cJ(\eta)$, for $\eta \neq 1,0$, and is of \emph{Monster type $(\al,\bt)$} if it has the Monster fusion law $\cM(\al, \bt)$, for $\alpha, \beta \neq 1,0$, $\al \neq \bt$.  Similarly, we say that an axis is of \emph{Jordan type}, or \emph{Monster type} if it obeys the Jordan, or Monster fusion law, respectively.

Note that $a \in A_1$, so we will always assume that $1 \in \cF$.

\begin{definition}
An axis $a$ is \emph{primitive} if $A_1 = \la a \ra$ is $1$-dimensional and $A$ is \emph{primitive} if all axes in $X$ are primitive.
\end{definition}

In the primitive case, we may assume that $1 \star \lm \subseteq \{ \lm\}$ and we have equality if and only if $\lm \neq 0$.  We say that the fusion law is \emph{Seress} if $0 \in \cF$ and $0 \star \lambda \subseteq \{ \lambda \}$ for all $\lambda \in \cF$.  In this case, we have the following partial associativity result.

\begin{lemma}[Seress Lemma]\textup{\cite[Proposition 3.9]{Axial1}}
If $\cF$ is Seress, then an axis $a$ associates with $A_{\{1, 0\}}(a)$.  That is, for $x \in A$ and $y \in A_{\{1, 0\}}(a)$,
\[
a(xy) = (ax)y.
\]
\end{lemma}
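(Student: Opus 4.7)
The plan is a direct eigenspace-decomposition argument. By bilinearity of the product in $A$, it suffices to verify $a(xy) = (ax)y$ when $x = x_\lambda \in A_\lambda(a)$ is a single $\ad_a$-eigenvector for some $\lambda \in \cF$, and when $y$ lies in exactly one of the two summands $A_0(a)$ or $A_1(a)$ making up $A_{\{1,0\}}(a)$. Each side is then rewritten using the fusion rule together with the eigenvalue conditions $ax_\lambda = \lambda x_\lambda$ and $ay = y$ (or $ay = 0$).

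The case $y \in A_0(a)$ is where the Seress hypothesis does its work: the fusion law places $x_\lambda y$ in $A_{\lambda \star 0}(a)$, and the condition $0 \star \lambda \subseteq \{\lambda\}$ forces $x_\lambda y$ to be itself a $\lambda$-eigenvector of $\ad_a$. Hence $a(x_\lambda y) = \lambda(x_\lambda y) = (\lambda x_\lambda)y = (ax_\lambda)y$, and the two sides match. This is essentially the whole content of the lemma and is just a mechanical unwinding of the fusion table against Seress.

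The case $y \in A_1(a)$ is handled using $ay = y$ and the commutativity of $A$. In the primitive setting $A_1(a) = \la a \ra$, so $y = \mu a$ for some scalar $\mu$ and commutativity gives
\[
a(x_\lambda y) \;=\; \mu\, a(x_\lambda a) \;=\; \mu\, a(ax_\lambda) \;=\; \mu\,(ax_\lambda)a \;=\; (ax_\lambda)y.
\]
More generally one can inspect the row of $1$ in the fusion tables of interest ($\cJ(\eta)$ and $\cM(\al,\bt)$) to see that $1 \star \lambda \subseteq \{\lambda\}$ for every $\lambda \in \cF$, so $x_\lambda y$ lies in $A_\lambda(a)$ and one finishes exactly as in the previous case. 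The main subtlety is precisely this second half: the Seress condition controls only the row indexed by $0$, so the $A_1(a)$-part needs either primitivity or an auxiliary row-$1$ condition, and one must check that the ambient convention of the paper supplies one of these. In the applications that follow (primitive axes of Jordan and Monster type), both options are available, so no real obstruction arises.
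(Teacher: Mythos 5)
Your proof is correct and is the standard eigenspace-decomposition argument; the paper itself gives no proof here but simply cites \cite[Proposition 3.9]{Axial1}, where essentially this same argument appears. Your caveat about the $A_1(a)$ case is well taken: the paper's definition of Seress literally only constrains the row of $0$, whereas the cited source also requires $1\star\lambda\subseteq\{\lambda\}$, and in the primitive setting assumed throughout that condition (or your direct commutativity computation with $y=\mu a$) closes the gap exactly as you say.
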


Axial algebras can also have a bilinear form which associates with the algebra product.

\begin{definition}
Let $A$ be an axial algebra.  A \emph{Frobenius form} is a bilinear form $(\cdot, \cdot) \colon A \times A \to \mathbb{F}$ such that, for all $a,b,c \in A$,
\[
(a,bc)=(ab,c).
\]
\end{definition}
Note that a Frobenius form is necessarily symmetric.

\subsection{Gradings and automorphisms}

\begin{definition}
Let $\cF$ be a fusion law and $T$ be a group.  A \emph{$T$-grading} is a map $f \colon \cF \to T$ such that
\[
f(\lambda)f(\mu) = f(\nu)
\]
for all $\lambda, \mu \in \cF$ and all $\nu \in \lambda \star \mu$.
\end{definition}

This definition is in the same style as the one given for decomposition algebras in \cite{DPSV}.  Note that this definition is equivalent to the one given in \cite{axialstructure}.

\begin{definition}
Let $A$ be an axial algebra with a $T$-graded fusion law $\cF$ and $T^*$ be the group of linear characters of $T$ over $\mathbb{F}$.  The \emph{Miyamoto map} $\tau \colon X \times T^* \to \Aut(A)$ sends the pair $(a, \chi)$ to the automorphism $\tau_a(\chi) \colon A \to A$ given by
\[
v \mapsto \chi(f(\lambda))v
\]
for each $\lambda \in \cF$ and $v \in A_\lambda(a)$.
\end{definition}

We call $\tau_a(\chi)$ a \emph{Miyamoto automorphism} and $T_a := \la \tau_a(\chi) : \chi \in T^* \ra$ the \emph{axial subgroup of $a$}.  When $\cF$ is $C_2$-graded and $\mathbb{F}$ does not have characteristic $2$, there is only one non-trivial character, $\chi_{-1}$, and we write $\tau_a = \tau_a(\chi_{-1})$.

\begin{definition}
The \emph{Miyamoto group} is the group
\[
\Miy(X) := \la T_a : a \in X \ra \leq \Aut(A).
\]
\end{definition}

An automorphism $g \in \Aut(A)$ takes an axis $a \in X$ to another axis, possibly outside $X$.  We say that $X$ is \emph{closed} if it is closed under the action of its Miyamoto group $\Miy(X)$.  If $\bar{X} := X^{\Miy(X)}$, then $\Miy(\bar{X}) = \Miy(X)$ and $\bar{X}$ is closed.  Whenever we have a graded fusion law, we will typically assume that $X$ is closed.  We will also consider proper subsets $Y \subset X$, using the notation $\Miy(Y)$ and $\bar{Y} = Y^{\Miy(Y)}$ and saying that $Y$ is closed if $Y = \bar{Y}$.

%%%%%%%%%%%%%%%%%%%%%%%%%%%
\subsection{Ideals}

The structure of ideals of axial algebras was studied in \cite{axialstructure}.

First, observe that an ideal $I$ is invariant under multiplication by an axis $a$, so the axis must be semisimple when restricted to $I$.  That is, $I$ is a direct sum of eigenspaces for $\ad_a|I$.

\begin{lemma}\textup{\cite[Corollary 3.11]{axialstructure}}\label{Miyideal}
Every ideal of an axial algebra $A$ is invariant under the action of the Miyamoto group $\Miy(X)$.
\end{lemma}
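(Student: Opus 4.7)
The plan is to exploit the explicit formula that defines each Miyamoto automorphism on eigenspaces. The key observation is that $\tau_a(\chi)$ acts on $A_\lambda(a)$ as scalar multiplication by $\chi(f(\lambda))$, so if we can decompose any $v\in I$ as a sum of components lying in the $\lambda$-eigenspaces of $\ad_a$ and also lying in $I$, then $\tau_a(\chi)(v)$ will automatically be a sum of elements of $I$, hence in $I$.

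First, I would pin down the eigenspace decomposition of $I$ with respect to each axis $a\in X$. Since $I$ is an ideal, it is invariant under $\ad_a$ (multiplication by $a$), and as already noted just before the lemma, this together with the semisimplicity of $\ad_a$ on $A$ forces $\ad_a|_I$ to be semisimple with spectrum contained in $\cF$. Concretely, the minimal polynomial of $\ad_a$ on $A$ splits into distinct linear factors over $\mathbb{F}$ indexed by $\cF$, so its restriction to the invariant subspace $I$ also has square-free minimal polynomial, giving
\[
I=\bigoplus_{\lambda\in\cF}\bigl(I\cap A_\lambda(a)\bigr).
\]

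Next, for any $v\in I$, write $v=\sum_\lambda v_\lambda$ with $v_\lambda\in I\cap A_\lambda(a)$. By definition of $\tau_a(\chi)$,
\[
\tau_a(\chi)(v)=\sum_{\lambda\in\cF}\chi(f(\lambda))\,v_\lambda,
\]
which is a linear combination of elements of $I$ and therefore lies in $I$. Hence $\tau_a(\chi)(I)\subseteq I$ for every $a\in X$ and every $\chi\in T^*$.

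Finally, since $\Miy(X)=\langle T_a:a\in X\rangle$ is generated by such Miyamoto automorphisms, the $\Miy(X)$-invariance of $I$ follows immediately. The proof is essentially a one-step calculation once the eigenspace decomposition of $I$ is in hand, and that decomposition is exactly the content of the semisimplicity remark preceding the lemma; so there is no real obstacle beyond recording these two observations carefully.
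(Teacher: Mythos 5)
Your argument is correct and is exactly the intended one: the paper's preceding remark already records that an ideal $I$ decomposes as $I=\bigoplus_{\lambda\in\cF}\bigl(I\cap A_\lambda(a)\bigr)$ because $\ad_a$ restricted to the invariant subspace $I$ remains semisimple, and from there each $\tau_a(\chi)$ acts diagonally by the scalars $\chi(f(\lambda))$, hence preserves $I$, and generation of $\Miy(X)$ by the $T_a$ finishes it. This matches the cited proof of Corollary 3.11 in the reference, so there is nothing to add.
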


Ideals in an axial algebra can be understood by naturally splitting them into two classes: those which contain an axis and those which do not contain any axes.

\begin{definition}
The \emph{radical} $R(A,X)$ of an axial algebra $A = \lla X \rra$ is the unique largest ideal of $A$ containing no axes.
\end{definition}

For axial algebras with a Frobenius form, the following allows us to easily identify the radical.

\begin{theorem}\textup{\cite[Theorem 4.9]{axialstructure}}\label{radical}
Let $A$ be a primitive axial algebra with a Frobenius form.  Then the radical $A^\perp$ of the Frobenius form coincides with the radical $R(A, X)$ if and only if $(a,a) \neq 0$ for all $a \in X$.
\end{theorem}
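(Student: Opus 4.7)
The plan is to prove the two directions separately, using the same key computation: for an axis $a$ and any eigenvector $v_\lambda \in A_\lambda(a)$ with $\lambda \neq 1$, associativity of the Frobenius form forces $(a,v_\lambda) = 0$, since $\lambda(a,v_\lambda) = (a, av_\lambda) = (a\cdot a, v_\lambda) = (a,v_\lambda)$.

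For the forward direction, I would argue contrapositively. Suppose $(a,a) = 0$ for some $a \in X$. I want to show $a \in A^\perp$, which places an axis in $A^\perp$ and so prevents $A^\perp$ from equalling $R(A,X)$. Take any $v \in A$ and decompose it against $a$ as $v = v_1 + \sum_{\lambda \neq 1} v_\lambda$, where by primitivity $v_1 = \mu a$ for some $\mu \in \mathbb{F}$. The key computation above gives $(a,v_\lambda)=0$ for $\lambda \neq 1$, and $(a,v_1) = \mu(a,a) = 0$. Hence $(a,v) = 0$ for all $v$, so $a \in A^\perp$.

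For the backward direction, assume $(a,a) \neq 0$ for all $a \in X$ and prove both inclusions. The inclusion $A^\perp \subseteq R(A,X)$ is the easy half: $A^\perp$ is an ideal by the Frobenius property (from $(xA^\perp, A) = (A^\perp, xA) \subseteq (A^\perp, A) = 0$), and it contains no axis since $(a,a) \neq 0$ rules out $a \in A^\perp$; so by maximality of the radical $A^\perp \subseteq R(A,X)$. For the reverse inclusion $R(A,X) \subseteq A^\perp$, let $R = R(A,X)$ and fix $v \in R$. By Lemma \ref{Miyideal} combined with the fact that any ideal is $\ad_a$-invariant and hence decomposes as $R = \bigoplus_{\lambda \in \cF}(R \cap A_\lambda(a))$, the $a$-eigenspace components of $v$ all lie in $R$. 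Primitivity gives $R \cap A_1(a) \subseteq \langle a\rangle$, and since $a \notin R$ this intersection is zero, so the component $v_1$ vanishes. Using the key computation on the remaining components gives $(a,v) = 0$ for every $a \in X$.

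It remains to extend this to $(v,w) = 0$ for all $w \in A$. Since $A$ is generated by $X$, every element of $A$ is a linear combination of (nonassociatively bracketed) products of axes, so by bilinearity it suffices to show $(v,w) = 0$ for such a product $w$ of length $n$, which I would do by induction on $n$. The base case $n=1$ was handled above. For $n \geq 2$, write $w = w_1 w_2$ with $w_1, w_2$ shorter products; then associativity of the form gives $(v, w_1 w_2) = (v w_2, w_1)$, and since $R$ is an ideal we have $v w_2 \in R$, so the inductive hypothesis applied to the shorter product $w_1$ finishes the step. This yields $R \subseteq A^\perp$, completing the proof.

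The main obstacle is the inductive extension: one needs the ideal property of $R$ to keep feeding Frobenius associativity, and the primitivity assumption is crucial precisely at the point where the $A_1(a)$-component of $v$ has to be killed (otherwise $v$ could contribute a nonzero $(a,a)$ term). If primitivity failed, one would need a more delicate eigenspace argument, and if some $(a,a) = 0$ the whole scheme collapses as the forward direction shows.
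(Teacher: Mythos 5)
Your proof is correct. Note that the paper itself does not prove this statement; it is quoted from \cite{axialstructure} (Theorem 4.9), and your argument is essentially the standard one given there: the identity $\lambda(a,v_\lambda)=(a\cdot a,v_\lambda)=(a,v_\lambda)$ killing all non-unit eigencomponents, so that $(a,v)$ reduces to the projection term $\mu(a,a)$, combined with the $\ad_a$-invariance of ideals and the Frobenius/ideal induction to pass from $(a,v)=0$ to $v\in A^\perp$. All steps check out, including the use of primitivity to force $R\cap A_1(a)=0$ and the contrapositive for the forward direction.
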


We now turn to ideals which do contain an axis.  Let $a, b \in X$.  Since $b$ is semisimple, we may decompose $a = \sum_{\lambda \in \cF} a_\lambda$, where $a_\lm \in A_\lm(b)$.  If $A$ is primitive, then $a_1$ is a scalar multiple of $b$ and we call $a_1$ the \emph{projection} of $a$ onto $b$.

\begin{definition}
Let $A$ be a primitive axial algebra.  The \emph{projection graph} $\Gamma$ is a directed graph with vertex set $X$ and a directed edge from $a$ to $b$ if the projection $a_1$ of $a$ onto $b$ is non-zero.
\end{definition}

\begin{lemma}\textup{\cite[Lemma 4.14]{axialstructure}}\label{idealprojgraph}
Let $A$ be a primitive axial algebra with projection graph $\Gamma$ and $I \unlhd A$.  Suppose that $a \in I$.  If there exists a directed path from $a$ to another axis $b$, then $b \in I$.
\end{lemma}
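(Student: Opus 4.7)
The plan is to reduce to the single-edge case and then construct a projection operator inside the ideal. Suppose first that $a \in I$ and there is a directed edge from $a$ to another axis $b$, so by definition the component $a_1$ of $a$ in $A_1(b)$ is non-zero. Since $b \in X$, it is semisimple with spectrum contained in the finite set $\cF$, and we have the eigenspace decomposition $a = \sum_{\lambda \in \cF} a_\lambda$ with $a_\lambda \in A_\lambda(b)$. My goal is to isolate $a_1$ using only operations that visibly preserve $I$.

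To do this I would use the standard Lagrange-interpolation projector
\[
p_1 \;=\; \prod_{\mu \in \cF \setminus \{1\}} \frac{\ad_b - \mu\,\id}{1 - \mu},
\]
which is a polynomial in $\ad_b$ whose action on $A$ is precisely the projection onto $A_1(b)$ along the other eigenspaces (this makes sense because the elements of $\cF$ are distinct scalars in $\mathbb{F}$ and, by the axial algebra axioms, $\ad_b$ is semisimple with eigenvalues in $\cF$). Because $I$ is an ideal, each application of $\ad_b$ sends $I$ into $I$, so applying $p_1$ to $a$ gives $a_1 \in I$. Now primitivity of $A$ means $A_1(b) = \la b \ra$, so $a_1$ is a scalar multiple of $b$, and the existence of the directed edge guarantees this scalar is non-zero. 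Hence $b \in I$.

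With the single-edge statement in hand, the general case follows immediately by induction on the length of the path: if $a = a_0 \to a_1 \to \cdots \to a_k = b$ is a directed path in $\Gamma$ with $a_0 \in I$, then repeated application of the single-edge argument puts each $a_i$ into $I$, and in particular $b \in I$.

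The only real subtlety is verifying that the Lagrange projector genuinely equals the spectral projection onto $A_1(b)$, which rests on the semisimplicity built into the axial algebra axioms together with the fact that $\cF$ is a finite subset of the field. There is no calculational obstacle beyond this, and the argument never leaves $I$ because multiplication by $b$ is an internal operation on the ideal regardless of whether $b$ itself already lies in $I$.
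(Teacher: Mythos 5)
Your proof is correct and is essentially the standard argument: the paper itself only cites \cite[Lemma 4.14]{axialstructure} without reproducing a proof, but the observation it records just before the lemma (that an ideal is $\ad_b$-invariant and hence decomposes into eigenspaces of $\ad_b|_I$, so each eigencomponent of $a$ lies in $I$) is exactly what your Lagrange projector makes explicit, after which primitivity and the non-vanishing of $a_1$ give $b \in I$, and induction along the path finishes it.
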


\begin{definition}\textup{\cite[Lemma 4.17]{axialstructure}}\label{projundirected}
Let $A$ be a primitive axial algebra with a Frobenius form such that $(a,a) \neq 0$ for all $a \in X$.  Then the projection graph $\Gamma$ is an undirected graph with an edge from $a$ to $b$ if and only if $(a,b) \neq 0$.
\end{definition}

In light of the above, we see that for an axial algebra with a Frobenius form which is non-zero on the axes, the ideals of $A$ containing axes are given by sums of connected components of the projection graph.  In particular, if the projection graph is connected, then there does not exist a proper ideal which contains an axis.

%%%%%%%%%%%%%%%%%%%%%%%%%%%%%%%%%%%%%%%%%%%%%%%%

\section{Axets}\label{sec:axet}

The concept of an axet is new and it formalises a closed set of axes together with its Miyamoto group.  Some of the ideas in this section appeared implicitly in \cite{axialconstruction}.

\subsection{Axets}

\begin{definition}\label{axet}
Let $S$ be a group.  An \emph{$S$-axet} $(G,X,\tau)$ consists of a group $G$ and a $G$-set $X$ (i.e.\ $G$ acts on $X$)
together with a \emph{Miyamoto map} $\tau\colon X\times S\to G$ (we write $\tau_x(s)$ 
for $\tau(x,s)$) satisfying, for all $x\in X$, $s, s'\in S$, and $g\in G$: 
\begin{enumerate}
\item $\tau_x(s)\in G_x$;
\item $\tau_x(ss')=\tau_x(s)\tau_x(s')$; 
and
\item $\tau_{xg}(s)=\tau_x(s)^g$.
\end{enumerate}
\end{definition}

By an abuse of terminology, we may talk about the axet $X$, when $S$, $G$ and 
$\tau$ are assumed or clear from context. 

We call the elements of $X$ \emph{axes}. The first two properties above 
mean that $\tau_x$ is a homomorphism from $S$ to the stabiliser $G_x$ 
of the axis $x$. The third property ties the action on $X$ to conjugation 
in $G$.  More precisely it says that, for a fixed $s \in S$, the map $x \mapsto \tau_x(s)$ is $G$-equivariant.

\begin{example}\label{axialaxet}
Let $A$ be an axial algebra for a $T$-graded fusion law $\cF$ and consider 
a closed subset of axes $Y \subseteq X$.  Define $G$ to be the setwise 
stabiliser of $Y$ in $\Aut(A)$.  Then, $(G, Y, \tau)$ is a $T^*$-axet.  
When $Y = X$, we call it the \emph{axet of $A$} and write $X = X(A)$.
\end{example}

The following examples show that axets are plentiful.

\begin{example}\label{ex:axet}
Let $S = \la s \ra \cong C_2$.  
\begin{enumerate}
\item Suppose $G$ is a group and let $X$ be a normal set of involutions 
in $G$.  We define $\tau \colon X \times S \to G$ by $\tau_x(1) = 1$ and 
$\tau_x(s) = x$, for all $x \in X$.  Then $X$ is a $C_2$-axet.

\item Suppose $\Phi$ is a root system and let $G = W$ be its Weyl group.  Let $X = \{ \{ \al, -\al \} : \al \in \Phi \}$ and define $\tau_x(s)$ to be the reflection in the root $\al$, where $x = \{ \al, -\al\}$ and $\tau_x(1) = 1$.  Then $X$ is a $C_2$-axet.  In fact this example is just a more combinatorial description of the set of reflections $X$ in the Weyl group $W$.
\end{enumerate}
\end{example}

\begin{lemma} \label{axet basics}
Let $x,y\in X$.
\begin{enumerate}
\item If $x$ and $y$ are in the same $G$-orbit, then the homomorphisms $\tau_x$ and $\tau_y$ have a common kernel.
\item The image $\im(\tau_x)$ is in the centre of $G_x$.
\end{enumerate}
\end{lemma}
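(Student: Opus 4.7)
The plan is to derive both parts directly from the three axioms in Definition \ref{axet}, with no serious obstacle to overcome; the main thing is to identify which axiom does the work in each case.

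For part (1), the key observation is that axiom (3) converts the $G$-action on $X$ into conjugation on the images $\tau_x(s)$. Write $y = xg$ for some $g \in G$ (possible since $x$ and $y$ lie in the same orbit). Then for every $s \in S$,
\[
\tau_y(s) = \tau_{xg}(s) = \tau_x(s)^g.
\]
Since conjugation by $g$ is an injective map on $G$, we have $\tau_y(s) = 1$ if and only if $\tau_x(s) = 1$. Hence $\ker(\tau_y) = \ker(\tau_x)$, as required. (Note this uses axiom (2) only implicitly, to make sense of speaking of a kernel of a homomorphism.)

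For part (2), I need to show that every $\tau_x(s)$ commutes with every element of $G_x$. By axiom (1), $\tau_x(s) \in G_x$, so $\im(\tau_x) \subseteq G_x$. Given any $h \in G_x$, we have $xh = x$, so by axiom (3),
\[
\tau_x(s)^h = \tau_{xh}(s) = \tau_x(s).
\]
Thus $\tau_x(s)$ commutes with $h$. Since $h \in G_x$ was arbitrary, $\tau_x(s) \in Z(G_x)$, and hence $\im(\tau_x) \subseteq Z(G_x)$.

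There is no real obstacle here: both parts are a one-line application of the $G$-equivariance axiom (3), combined with axiom (1) in the second case. The only mild subtlety is to notice that in part (2) one must take $g = h \in G_x$ in axiom (3) to force the trivial action $xh = x$, which is exactly what produces centralisation rather than mere conjugation.
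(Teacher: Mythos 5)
Your proof is correct and follows essentially the same route as the paper: part (1) via $\tau_{xg}(s)=\tau_x(s)^g$ for $g$ sending $x$ to $y$, and part (2) by specialising axiom (3) to $g\in G_x$. No issues.
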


\begin{proof}
For the first claim, let $g\in G$ be such that $xg=y$. If 
$s\in\ker(\tau_x)$ then $\tau_y(s)=\tau_{xg}(s)=\tau_x(s)^g=1^g=1$. So 
$\ker(\tau_x)\leq\ker(\tau_y)$ and, by symmetry, we have equality.

Turning to the second claim, if $g\in G_x$ and $s\in S$ then 
$\tau_x(s)^g=\tau_{xg}(s)=\tau_x(s)$. So $\tau_x(s)\in Z(G_x)$ for each 
$s\in S$.
\end{proof}

As a consequence of the second claim, the derived subgroup $[S,S]$ is 
in the kernel of $\tau_x$ for all $x\in X$. Because of this, we may 
assume that $S$ is abelian. In the context of axial algebras, $S = T^*$ is 
the group of linear characters of the grading group $T$ of the fusion law. 
So $S$ is automatically abelian.

\begin{definition}
Suppose $(G,X,\tau)$ is an $S$-axet. 
\begin{enumerate}
\item The subgroup $T_x:=\im(\tau_x)\leq G$ is the \emph{axial 
subgroup} corresponding to the axis $x$.
\item The \emph{Miyamoto group} of the axet $X$ is the subgroup
$$\Miy(X)=\la T_x:x\in X\ra \leq G.$$
\end{enumerate}
\end{definition}

For the example of an axial algebra $A$ in Example \ref{axialaxet}, the Miyamoto group of its axet $X$ is the Miyamoto group of the axial algebra.  Note that, just as the Miyamoto group of an axial algebra is not necessarily its full automorphism group, the Miyamoto group of an axet $X$ can be a proper subgroup of $G$.

\begin{lemma}
\begin{enumerate}
\item If $g\in G$ and $x\in X$ then $T_x^g=T_{xg}$.
\item The Miyamoto group $\Miy(X)$ is a normal subgroup of $G$.
\end{enumerate}
\end{lemma}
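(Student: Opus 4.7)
The plan is to chase the definitions; both parts follow directly from axiom (3) of Definition \ref{axet}.

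For (1), I would start by unpacking $T_x^g = \{\tau_x(s)^g : s \in S\}$. Axiom (3) gives $\tau_x(s)^g = \tau_{xg}(s)$ for every $s \in S$, so the right-hand side is exactly $\{\tau_{xg}(s) : s \in S\} = \im(\tau_{xg}) = T_{xg}$. This is just one line of calculation.

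For (2), the idea is that conjugation by any $g \in G$ permutes the generating set $\{T_x : x \in X\}$ of $\Miy(X)$. More precisely, for $g \in G$,
\[
\Miy(X)^g = \la T_x : x \in X \ra^g = \la T_x^g : x \in X \ra = \la T_{xg} : x \in X \ra
\]
by part (1). Since the map $x \mapsto xg$ is a bijection of $X$ (as $G$ acts on $X$), the indexed family $\{T_{xg} : x \in X\}$ equals $\{T_y : y \in X\}$, so $\Miy(X)^g = \Miy(X)$, which says $\Miy(X) \trianglelefteq G$.

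There is no real obstacle here: both claims are immediate consequences of the equivariance axiom $\tau_{xg}(s) = \tau_x(s)^g$, combined in (2) with the fact that $G$ acts on $X$ so conjugation merely relabels the generating axial subgroups.
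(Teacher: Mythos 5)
Your proof is correct and follows exactly the route the paper takes (the paper merely states that part (1) is the equivariance axiom and part (2) follows from part (1), leaving the details implicit). Your write-up simply makes explicit the computation $T_x^g=\{\tau_x(s)^g: s\in S\}=\{\tau_{xg}(s): s\in S\}=T_{xg}$ and the observation that conjugation by $g$ permutes the generators of $\Miy(X)$.
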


\begin{proof}
The first claim follows from the last axiom of axets, and the second 
claim follows from the first one.
\end{proof}

Note that $G$ is not assumed to act faithfully on $X$. We call an axet 
\emph{faithful} when the $G$-set in it is faithful.

\begin{lemma}\label{Miycentre}
Let $\pi\colon G\to\Sym(X)$ be the action of $G$ on $X$.  Then $K = \ker(\pi)$ centralises $\Miy(X)$. In 
particular, the kernel $\Miy(X)\cap K$ of $\Miy(X)$ acting on $X$ is in 
the centre of $\Miy(X)$.
\end{lemma}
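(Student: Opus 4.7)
The plan is to reduce both claims directly to the third axiom of an axet, namely $\tau_{xg}(s)=\tau_x(s)^g$, and to use the fact that $\Miy(X)$ is generated by the axial subgroups $T_x$.

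First I would take any $k\in K$ and any axis $x\in X$. Since $k$ acts trivially on $X$, we have $xk=x$, so by axiom (3) of Definition \ref{axet}, $\tau_x(s)^k=\tau_{xk}(s)=\tau_x(s)$ for all $s\in S$. Hence $k$ commutes with every element of $T_x=\im(\tau_x)$. Since this holds for every $x\in X$, $k$ commutes with every generator of $\Miy(X)=\langle T_x:x\in X\rangle$, and therefore centralises $\Miy(X)$. This yields the first claim $K\le C_G(\Miy(X))$.

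For the second claim, note that $\Miy(X)\cap K$ is by definition the kernel of the restriction of $\pi$ to $\Miy(X)$, i.e.\ the subgroup of $\Miy(X)$ acting trivially on $X$. Since $\Miy(X)\cap K\subseteq K$, by the first claim its elements centralise $\Miy(X)$; but they also lie in $\Miy(X)$, so they belong to $Z(\Miy(X))$.

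There is essentially no obstacle here: the argument is a one-line application of the equivariance axiom together with the definition of $\Miy(X)$ as a subgroup generated by the $T_x$. The only thing to be careful about is noting explicitly that to centralise $\Miy(X)$ it suffices to centralise each generating subgroup $T_x$, which is immediate from $T_x\le C_G(k)$ for each $x$.
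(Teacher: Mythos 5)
Your proof is correct and takes essentially the same route as the paper: the paper simply cites Lemma \ref{axet basics}(2) (that $T_x\le Z(G_x)$, itself proved by the very computation $\tau_x(s)^g=\tau_{xg}(s)=\tau_x(s)$ you write out) instead of re-deriving it from axiom (3). No gap.
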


\begin{proof}
If $g\in K$ then $g$ is contained in each axis stabilizer $G_x$, which 
means, by Lemma \ref{axet basics}, that $g$ centralises each axial 
subgroup $T_x$.
\end{proof}

So if we define $\bar G=\im(\pi)$, $\bar X :=X$ and $\bar\tau = \pi \circ \tau$, then $(\bar{G}, \bar{X}, \bar{\tau})$ is an $S$-axet which is faithful.  However, the group $\bar{G}$ may still have some automorphisms which are not in $\Miy(\bar{X}) = \overline{\Miy(X)}$, so we want to define an even smaller axet.

\begin{definition}
Suppose $(G,X,\tau)$ is an $S$-axet and let $\pi\colon G\to\Sym(X)$ be the action of $G$ on $X$. Then
$(\overline{\Miy(X)},\bar X,\bar\tau)$ is an $S$-axet, which we call the 
\emph{core} of the axet $(G,X,\tau)$.
\end{definition}

We define the \emph{action Miyamoto group} $\AMiy(X) := \overline{\Miy(X)}$.  Clearly, $\AMiy(X) \cong 
\Miy(X)/(\Miy(X) \cap K)$, where, as above, $K$ is the kernel of the action $\pi$ of $G$ on $X$.  In other words, $\Miy(X)$ is an extension of $\AMiy(X)$ and according to Lemma \ref{Miycentre}, it is in fact a central extension.

We now turn to subaxets.

\begin{definition}
Suppose $(G,X,\tau)$ is an $S$-axet. A subset $Y\subseteq X$ is 
\emph{closed} if $Y$ is invariant under each axial subgroup $T_y$, 
$y\in Y$.
\end{definition}

It is easy to see that the intersection of closed subsets is itself 
closed. This allows us to introduce, for a subset $Y\subseteq X$, its 
\emph{closure} $\la Y\ra$ as the smallest closed subset containing $Y$.  (We note that $Y$ is closed 
if and only if $Y = \la Y \ra$.)  We will also say that $Y$ \emph{generates} $\la Y \ra$ and that 
$\la Y \ra$ is $k$-generated if it is generated by a set of axes $Y$ of cardinality $k$.

\begin{definition}\label{subaxet}
A \emph{subaxet} $(H,Y,\sg)$ of the axet $(G,X,\tau)$ consists of a 
closed subset $Y\subseteq X$, its set-wise stabiliser $H=G_Y$, and the 
map $\sg$, which is the restriction of $\tau$ to the set $Y\times S$.
\end{definition}

Note that since $Y$ is closed, the axial subgroup $T_y$ is contained in 
$H$ for each $y\in Y$. So $(H,Y,\sg)$ is indeed an $S$-axet.  Correspondingly, we can talk about $\Miy(Y)$, 
$\AMiy(Y)$ and the core of $Y$.  For the example of an axial algebra $A$ in Example \ref{axialaxet}, if $B$ is a sub axial algebra with a closed set of axes $Y$, then $(H, Y, \sg)$ is indeed a subaxet of $X$.

Our aim is to define the category of axets. One way is to define a morphism of $S$-axets $(G,X,\tau)$ and $(G',X',\tau')$ 
as a pair of maps $(\phi,\psi)$, where $\phi:G\to G'$ is a homomorphism and $\psi:X\to X'$ is a map, such that the natural consistency 
conditions on the action and Miyamoto map are satisfied. However, as it turns out, this concept is too restrictive and often 
a natural map $\psi$  between the two sets of axes has no corresponding group homomorphism $\phi$. Because of this, we opt for a weaker concept that better suits 
our needs.

\begin{definition}
For $S$-axets $(G, X, \tau)$ and $(G',X', \tau')$, a \emph{morphism} from $X$ to $X'$ is a map of sets $\psi \colon X \to X'$ 
such that for all $x,y\in X$ and $s\in S$,
$$
\psi(y\tau_x(s))=\psi(y)\tau'_{\psi(x)}(s).
$$
\end{definition}

It is immediate that the composition of two morphisms is again a morphism.  So we indeed have a category of $S$-axets, 
denoted $S$-$\mathbf{Axet}$.  In the remainder of this subsection, we investigate this category, specifically the morphisms, in more detail.

Firstly, let us identify the isomorphisms, \ie invertible morphisms.

\begin{proposition} 
If a morphism $\psi:X\to X'$ is bijective then $\psi^{-1}:X'\to X$ is also 
a morphism of axets.
\end{proposition}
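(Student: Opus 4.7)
The plan is a direct unwinding of the morphism axiom using bijectivity. I need to verify that, for all $x',y' \in X'$ and $s \in S$,
\[
\psi^{-1}(y'\tau'_{x'}(s)) = \psi^{-1}(y')\,\tau_{\psi^{-1}(x')}(s).
\]
So I start by fixing arbitrary $x', y' \in X'$ and $s \in S$, and setting $x := \psi^{-1}(x')$ and $y := \psi^{-1}(y')$ in $X$, which is legitimate since $\psi$ is a bijection.

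The key step is to apply the morphism condition of $\psi$ to this preimage data: namely,
\[
\psi(y\tau_x(s)) = \psi(y)\,\tau'_{\psi(x)}(s) = y'\,\tau'_{x'}(s).
\]
Applying $\psi^{-1}$ to both sides, I obtain $y\tau_x(s) = \psi^{-1}(y'\tau'_{x'}(s))$, which rewrites as
\[
\psi^{-1}(y'\tau'_{x'}(s)) = \psi^{-1}(y')\,\tau_{\psi^{-1}(x')}(s),
\]
exactly the morphism condition required for $\psi^{-1}$.

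There is essentially no obstacle in this proof: the morphism axiom is a symmetric equation between the two actions via $\psi$, and bijectivity lets us substitute preimages freely. The only thing worth noting is that the definition of a morphism of axets does \emph{not} demand anything about compatibility with the ambient groups $G$ and $G'$ themselves, so we do not need to separately invert a group homomorphism; this is precisely why the weaker notion of morphism adopted in the paper behaves well under inversion.
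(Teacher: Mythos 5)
Your proof is correct and follows exactly the same route as the paper's: fix $x',y'$, pass to preimages $x=\psi^{-1}(x')$, $y=\psi^{-1}(y')$, apply the morphism identity for $\psi$, and then apply $\psi^{-1}$ to both sides. Your closing remark about not needing to invert any group homomorphism is a nice observation that matches the paper's motivation for its weaker notion of morphism.
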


\begin{proof}
We need to show that, for all $x',y'\in X'$ and all $s\in S$, we have that 
$\psi^{-1}((y')\tau'_{x'}(s))=\psi^{-1}(y')\tau_{\psi^{-1}(x')}(s)$. Let 
$x=\psi^{-1}(x')$ and $y=\psi^{-1}(y')$ (that is, $x'=\psi(x)$ and $y'=\psi(y)$). 
Since $\psi$ is a morphism, we have $\psi(y)\tau'_{\psi(x)}(s)=\psi(y\tau_x(s))$. 
Applying $\psi^{-1}$ to both sides of this equality, we obtain 
$\psi^{-1}(\psi(y)\tau'_{\psi(x)}(s))=y\tau_x(s)$, which is exactly the condition 
we require.
\end{proof}

Thus, isomorphisms are just bijective morphisms.  As usual, we try to classify axets up to isomorphism. In this context, let us mention 
the following fact.

\begin{proposition} \label{core isomorphic}
Every axet is isomorphic to its core.
\end{proposition}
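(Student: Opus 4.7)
The plan is to exhibit an explicit isomorphism, and the natural candidate is the identity map on the underlying set. Recall that the core of $(G,X,\tau)$ is $(\overline{\Miy(X)},\bar X,\bar\tau)$, where $\bar X=X$ as a set, $\overline{\Miy(X)}=\pi(\Miy(X))$ for $\pi\colon G\to\Sym(X)$ the action map, and $\bar\tau=\pi\circ\tau$. So I propose to take $\psi=\id_X\colon X\to\bar X$.

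The main thing I need to verify is that $\psi$ is a morphism of $S$-axets, i.e., that for all $x,y\in X$ and $s\in S$,
$$
\psi(y\tau_x(s)) \;=\; \psi(y)\,\bar\tau_{\psi(x)}(s).
$$
With $\psi=\id$, this reduces to $y\tau_x(s)=y\,\bar\tau_x(s)$. Here $\bar\tau_x(s)=\pi(\tau_x(s))\in\overline{\Miy(X)}$, and the action of $\overline{\Miy(X)}$ on $\bar X=X$ is the one induced by the $G$-action via $\pi$; that is, for any $g\in\Miy(X)$ and any $y\in X$, we have $y\cdot\pi(g)=yg$. Applied to $g=\tau_x(s)$, this gives exactly $y\tau_x(s)=y\,\bar\tau_x(s)$, as required. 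The only point that needs checking is that this action of $\overline{\Miy(X)}$ on $X$ is well-defined, but this is immediate: by Lemma \ref{Miycentre} (applied to $\Miy(X)$), the kernel of $\pi|_{\Miy(X)}$ acts trivially on $X$, so the action factors through the quotient $\overline{\Miy(X)}=\Miy(X)/(\Miy(X)\cap K)$.

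Since $\psi=\id_X$ is clearly bijective, the preceding Proposition (that bijective morphisms are isomorphisms) yields that $\psi$ is an isomorphism of $S$-axets, which is the desired conclusion. I do not expect any serious obstacles here: everything is essentially forced by the construction of the core, and the only subtlety is distinguishing the action of $g\in G$ from the action of its image $\pi(g)$, which agree on $X$ by definition.
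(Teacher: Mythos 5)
Your proposal is correct and is exactly the paper's argument: the paper's proof consists of the single observation that ``the required isomorphism is simply the identity map,'' and you have merely filled in the routine verification that $\id_X$ satisfies the morphism condition (which only involves the sets and the Miyamoto maps, not the ambient groups) and is therefore an isomorphism by the preceding proposition on bijective morphisms.
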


The required isomorphism is simply the identity map.  This proposition justifies our introduction of the core axet, which serves as a canonical version of the axet.

Our next task is to develop the structure theory of axet morphisms similar to the 
First Isomorphism Theorem for groups. We begin by discussing congruences on axets.

\begin{definition}
Suppose $(G,X,\tau)$ is an $S$-axet. A \emph{congruence} $\sim$ on $X$ is an equivalence 
relation on $X$ such that if $x\sim y$ and $z\sim w$ then $x\tau_z(s)\sim y\tau_w(s)$ 
for $x,y,z,w\in X$ and all $s\in S$. 
\end{definition}

We will use $[x]$ for the congruence class of $x\in X$.  Note the following properties of congruences. 

\begin{lemma}\label{axetcongruence}
Suppose $\sim$ is a congruence on an $S$-axet $(G,X,\tau)$. Then
\begin{enumerate}
\item $\sim$ is invariant under the action of $\Miy(X)$, \ie $x\sim y$ if and only if 
$xg\sim yg$ for $x,y\in X$ and $g\in\Miy(X)$; consequently, $\Miy(X)$ acts on the set of congruence classes $X/\!\sim$ via $[x]g = [xg]$;
\item if $z\sim w$ then the action of $\tau_z(s)$ and $\tau_w(s)$ 
on the set of congruence classes $X/\!\sim$ is the same.
\end{enumerate}
\end{lemma}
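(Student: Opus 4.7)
The plan is to prove both claims by directly unwinding the congruence axiom, which has essentially been tailor-made for this lemma; the work is routine once one is careful about the definition of $\Miy(X)$.

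For part (1), I would first reduce to the generators of $\Miy(X)$. Recall that $\Miy(X) = \la T_x : x \in X \ra$ is generated by the Miyamoto automorphisms $\tau_z(s)$ with $z \in X$ and $s \in S$. The congruence axiom, specialised to $z = w$ (using reflexivity $z \sim z$), states that $x \sim y$ implies $x\tau_z(s) \sim y\tau_z(s)$. Because $\tau_z \colon S \to G$ is a homomorphism by Definition \ref{axet}(2), the inverse $\tau_z(s)^{-1} = \tau_z(s^{-1})$ is again a Miyamoto map of the same form, and applying the same implication in reverse yields $x\tau_z(s) \sim y\tau_z(s) \Rightarrow x \sim y$. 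A straightforward induction on the length of a word in the generators then promotes this to the full equivalence $x \sim y \iff xg \sim yg$ for every $g \in \Miy(X)$. That equivalence precisely says that the rule $[x]g := [xg]$ is well-defined on $X/\!\sim$, and the group action axioms transfer immediately from the $\Miy(X)$-action on $X$.

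For part (2), suppose $z \sim w$ and fix an arbitrary $x \in X$. Applying the congruence axiom to the pairs $x \sim x$ (by reflexivity) and $z \sim w$ yields
\[
x\tau_z(s) \sim x\tau_w(s),
\]
that is, $[x\tau_z(s)] = [x\tau_w(s)]$. Using the action defined in part (1), this reads $[x]\tau_z(s) = [x]\tau_w(s)$, and since $x$ was arbitrary the two automorphisms $\tau_z(s)$ and $\tau_w(s)$ induce the same map on $X/\!\sim$.

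I do not expect a real obstacle here. The only mild subtlety is the remark that $\tau_z(s)^{-1}$ is again a Miyamoto generator, needed to upgrade the one-sided implication in (1) to an equivalence; but this is immediate from the homomorphism axiom in Definition \ref{axet}(2). Everything else is essentially a direct reading of the definition of congruence.
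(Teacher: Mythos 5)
Your proof is correct and follows essentially the same route as the paper: reduce invariance to the generators $\tau_z(s)$ of $\Miy(X)$, use the congruence axiom with $w=z$ for the forward implication and the identity $\tau_z(s)^{-1}=\tau_z(s^{-1})$ for the converse, and for part (2) apply the axiom to $x\sim x$ together with $z\sim w$. No gaps.
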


\begin{proof}
It suffices to show that $\sim$ is invariant under $g=\tau_z(s)$ for all $z\in X$ 
and $s\in S$. Suppose $x\sim y$ for some $x,y\in X$ and set $x'=x\tau_z(s)$ and $y'=y\tau_z(s)$. 
The congruence condition taken with $w=z$ gives us that $x\tau_z(s)\sim y\tau_z(s)$, \ie 
$x'\sim y'$. Conversely, suppose that $x'\sim y'$. We again use the congruence condition, this time with $\tau_z(s^{-1}) = \tau_z(s)^{-1}$ to get that $x = x'\tau_z(s^{-1}) \sim y'\tau_z(s^{-1}) = y$.

For the second part, let $z \sim w$.  Since $x \sim x$, we can write $x\tau_z(s) \sim  x\tau_w(s)$.  Therefore, $[x]\tau_z(s) = [x]\tau_w(s)$ for all $s \in S$.
\end{proof}

These properties show that the following concept is well-defined.

\begin{definition}
The \emph{factor axet} $X/\!\sim$ is the axet $(\bar G,\bar X,\bar\tau)$, where $\bar X=X/\!\sim$ is 
the set of congruence classes of $\sim$, $\bar G$ is the group induced by $\Miy(X)$ acting on $\bar X$, 
and $\bar\tau_{[x]}(s)\in\bar G$ is the permutation of $\bar X$ induced by $\tau_x(s)$.
\end{definition}

Note that the factor axet immediately appears in its canonical core version as the group $G$ here is the action Miyamoto group.  We could in principle have defined the group part of the axet differently, taking a larger group.  However, in view of Proposition \ref{core isomorphic}, any other version of the factor axet would be isomorphic to the core version given above.

\begin{theorem}[First Isomorphism Theorem]
Suppose that we have a morphism of axets $\psi \colon X\to X'$.  Then
\begin{enumerate}
\item the image $\psi(X)$ is a subaxet of $X'$;
\item the equivalence $x \sim y$ given by $\psi(x) = \psi(y)$ is a congruence on $X$; and
\item $\psi(X) \cong X/\!\sim$.
\end{enumerate}
\end{theorem}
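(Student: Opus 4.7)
The plan is to verify the three parts directly from the definitions, exploiting the morphism identity $\psi(y\tau_x(s))=\psi(y)\tau'_{\psi(x)}(s)$ throughout. Nothing in the statement requires deep structural theory; the content is bookkeeping analogous to the group-theoretic First Isomorphism Theorem, and the only genuinely non-trivial input is Lemma \ref{axetcongruence}, which tells us the factor axet is well-defined.

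For (1), I would take $\psi(x),\psi(y)\in\psi(X)$ and $s\in S$, and compute $\psi(y)\tau'_{\psi(x)}(s)=\psi(y\tau_x(s))\in\psi(X)$. Thus $\psi(X)$ is invariant under every axial subgroup $T'_{\psi(x)}$ with $\psi(x)\in\psi(X)$, so $\psi(X)$ is a closed subset of $X'$. Together with its set-wise stabiliser in the Miyamoto group of $X'$ and the restriction of $\tau'$, this gives a subaxet in the sense of Definition \ref{subaxet}.

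For (2), I would verify the congruence condition. The relation defined by $\psi(x)=\psi(y)$ is tautologically an equivalence. If $x\sim y$ and $z\sim w$, then $\psi(x)=\psi(y)$ and $\psi(z)=\psi(w)$, so
\[
\psi(x\tau_z(s)) = \psi(x)\tau'_{\psi(z)}(s) = \psi(y)\tau'_{\psi(w)}(s) = \psi(y\tau_w(s)),
\]
which gives $x\tau_z(s)\sim y\tau_w(s)$, as required.

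For (3), I would define $\bar\psi\colon X/\!\sim\,\to\psi(X)$ by $\bar\psi([x])=\psi(x)$. It is well-defined and injective by the definition of $\sim$, and surjective by construction, hence a bijection of sets. To check it is a morphism of axets I would compute, using Lemma \ref{axetcongruence} to justify $\bar\tau_{[x]}(s)$ acting on classes via a chosen representative,
\[
\bar\psi([y]\bar\tau_{[x]}(s)) = \bar\psi([y\tau_x(s)]) = \psi(y\tau_x(s)) = \psi(y)\tau'_{\psi(x)}(s) = \bar\psi([y])\tau'_{\bar\psi([x])}(s).
\]
By the earlier proposition that bijective morphisms have morphism inverses, $\bar\psi$ is an isomorphism. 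The only subtle point is making sure that in the factor axet we are using $\Miy(X)$ (or the action Miyamoto group) on the right-hand side, while on the target we use $\Miy(X')$; since our notion of morphism requires only compatibility of the Miyamoto maps and not of the ambient groups, this mismatch causes no trouble, and in fact this flexibility is the very reason Definition of morphism was weakened. I expect no serious obstacle here—the main care is simply to keep the two Miyamoto maps $\tau$ and $\tau'$ clearly separated and to appeal to Lemma \ref{axetcongruence} to legitimise the notation $\bar\tau_{[x]}(s)$.
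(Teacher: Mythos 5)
Your proposal is correct and follows essentially the same route as the paper's proof: the same three direct computations using the morphism identity, with the factor axet's well-definedness supplied by Lemma \ref{axetcongruence}. The extra remarks on keeping the Miyamoto groups separate are sensible but do not change the argument.
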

\begin{proof}
Let $x', y' \in \psi(X)$.  Say, $x' = \psi(x)$ and $y' = \psi(y)$ for some $x,y \in X$.  Then, $y'\tau'_{x'}(s) = \psi(y)\tau'_{\psi(x)}(s) = \psi(y\tau_x(s))$, which is in $\psi(X)$.  So, $\psi(X)$ is closed in $X'$ and hence it is a subaxet, as claimed.

Let $x \sim y$ and $z \sim w$.  Then $\psi(x\tau_z(s)) = \psi(x)\tau'_{\psi(z)}(s) = \psi(y)\tau'_{\psi(w)}(s) = \psi(y\tau_w(s))$ and so $x\tau_z(s) \sim y\tau_w(s)$.  Hence, $\sim$ is a congruence on $X$.

Finally, the congruence classes of $\sim$ are fibres of $\psi$ and so we have a bijection $\bar{\psi}$ between $\bar{X} = X/\!\sim$ and $\im \psi$.  We just need to check that this bijection is a morphism.  Let $[x],[y] \in \bar{X}$.  Then, $\bar{\psi}([y]\bar{\tau}_{[x]}(s)) = \bar{\psi}([y\tau_{x}(s)]) = \psi(y\tau_{x}(s))= \psi(y)\tau'_{\psi(x)}(s) = \bar{\psi}([y])\tau'_{\bar{\psi}([x])}(s)$ and so the bijection $\bar{\psi}$ is indeed an isomorphism.
\end{proof}

Let us now consider the groups involved.  If $\psi \colon X \to X'$ is a morphism of axets, then it is also a isomorphism between the core of $X$ and the core of the image axet $\psi(X)$.

\begin{proposition}
Suppose $(G,X, \tau)$ and $(G',X', \tau')$ are core axets and $\psi \colon X \to X'$ is a surjective morphism.  Then, there exists a unique surjective homomorphism $\phi \colon G \to G'$ such that
\[
\phi(\tau_x(s)) = \tau'_{\psi(x)}(s)
\]
for all $x \in X$, $s \in S$.
\end{proposition}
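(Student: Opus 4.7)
The plan is to exploit the defining property of the core: both $G = \Miy(X)$ and $G' = \Miy(X')$ act faithfully on their respective axet sets, and both are generated by their axial subgroups. So the prescribed rule $\phi(\tau_x(s)) = \tau'_{\psi(x)}(s)$ already determines $\phi$ on a generating set of $G$, giving uniqueness for free; the real content is the construction and well-definedness.

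To build $\phi$, I would write each $g \in G$ as a product of axial generators $g = \tau_{x_1}(s_1)\cdots\tau_{x_n}(s_n)$ and tentatively set
\[
\phi(g) := \tau'_{\psi(x_1)}(s_1)\cdots\tau'_{\psi(x_n)}(s_n) \in G'.
\]
Iterating the morphism identity $\psi(y\tau_x(s)) = \psi(y)\tau'_{\psi(x)}(s)$ in $y$ gives, for every $y \in X$, the key relation
\[
\psi(yg) = \psi(y)\,\tau'_{\psi(x_1)}(s_1)\cdots\tau'_{\psi(x_n)}(s_n).
\]
Now suppose the same $g$ is written as a different product $\tau_{y_1}(t_1)\cdots\tau_{y_m}(t_m)$, producing some candidate $h' \in G'$. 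Then for every $y \in X$, both $\phi(g)$ and $h'$ send $\psi(y)$ to $\psi(yg)$. Since $\psi$ is surjective, the two elements agree on all of $X'$, and since we are in the core axet $G'$ acts faithfully on $X'$, so $h' = \phi(g)$. This is where the core hypothesis is essential and is the main technical point; without faithfulness one could only conclude equality modulo the kernel of the action.

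Well-definedness in hand, $\phi$ is a homomorphism by construction (concatenation of words corresponds to products in $G'$). Surjectivity is immediate: $G'$ is generated by $\{\tau'_{x'}(s) : x' \in X', s \in S\}$ and each such generator lifts through the surjection $\psi$ to $\phi(\tau_x(s))$ for any $x$ with $\psi(x) = x'$. Finally, any homomorphism satisfying $\phi(\tau_x(s)) = \tau'_{\psi(x)}(s)$ must agree with the one we built, since these elements generate $G$, giving uniqueness.
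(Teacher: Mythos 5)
Your proof is correct, and it rests on the same three pillars as the paper's: $G$ is generated by the $\tau_x(s)$ because the axet is a core, the morphism identity iterates to $\psi(yg)=\psi(y)\,\tau'_{\psi(x_1)}(s_1)\cdots\tau'_{\psi(x_n)}(s_n)$, and $G'$ acts faithfully on $X'$. The difference is one of packaging. The paper first invokes the First Isomorphism Theorem to identify $X'$ with the factor axet $X/\!\sim$, uses the congruence lemma to see that $G$ acts on the congruence classes, transports that action through the bijection to get an action of $G$ on $X'$ with $\psi(y)g=\psi(yg)$, and defines $\phi$ as the resulting action homomorphism $G\to\Sym(X')$; with that formulation, well-definedness and multiplicativity are automatic, and the only computation needed is that $\phi(\tau_x(s))$ acts on $X'$ as $\tau'_{\psi(x)}(s)$, whence the image is $G'=\AMiy(X')$. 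You instead define $\phi$ on words in the generators and carry out the well-definedness check by hand, which is exactly where the faithfulness of the core enters in your version (in the paper's version it is hidden in the identification of $\tau'_{\psi(x)}(s)$ with a permutation of $X'$). Your route is slightly more self-contained, since it avoids the factor-axet machinery; the paper's is slightly slicker, since the action-homomorphism formulation makes well-definedness a non-issue. Either way the uniqueness and surjectivity arguments are the same generation arguments.
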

\begin{proof}
Let $\sim$ be the congruence associated with $\psi$, $\bar{X} = X/\!\sim$ be the factor axet and let $\bar{\psi} \colon \bar{X} \to X' = \psi(X)$ be the isomorphism coming from the First Isomorphism Theorem.  By Lemma \ref{axetcongruence}, the group $G = \AMiy(X)$ has a natural action on $\bar{X}$ given by $[y]g = [yg]$.  Using the bijection $\bar{\psi}$, we translate this action to $X'$ to get $y'g = \bar{\psi}([y]g) = \bar{\psi}([yg])$, where $[y] = \bar{\psi}^{-1}(y')$, \ie $y'= \psi(y)$.  Hence, $y'g = \psi(yg)$.  We let $\phi \colon G \to \Sym(X')$ be the action homomorphism,

We claim that $\phi(G) = G'$ and the condition in the proposition is satisfied.  We check the action of $\phi(\tau_x(s))$ on $y'$:
\[
\phi(\tau_x(s))(y') = y' \tau_x(s) = \psi(y\tau_x(s)) = \psi(y) \tau'_{\psi(x)}(s) = y' \tau'_{\psi(x)}(s)
\]
and hence $\phi(\tau_x(s)) = \tau'_{\psi(x)}(s)$ as claimed.  Since $G' = \AMiy(X')$, we immediately have that $\phi(G) = G'$.
\end{proof}

As we can now see, there is in fact a correspondence between groups involved in an axet morphism.  But this correspondence is a group homomorphism only when we deal with the action Miyamoto groups (the Miyamoto groups of the cores).  In other situations, $G$ is a central extension of the Miyamoto group, possibly with some extra automorphisms on top.  So this is why we cannot expect a group homomorphism to accompany a general axet morphism: the correspondence we get is more like an isogeny.

Let us now mention some additional properties of axet morphisms which lead to a correspondence theory for axets.

\begin{proposition}
Suppose that $\psi \colon X\to X'$ is a morphism of axets. 
\begin{enumerate}
\item For $Z\subseteq X$, we have that $\psi(\la Z\ra)=\la\psi(Z)\ra$; i.e., $\psi$ takes closed sets to 
closed sets.
\item For $Z'\subseteq X'$, we have $\psi^{-1}(\la Z'\ra)=\la\psi^{-1}(Z)\ra$; i.e., full preimages 
under $\psi$ of closed sets are closed.
\end{enumerate}
\end{proposition}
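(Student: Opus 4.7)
The plan is to reduce both equalities to a single principle---that a morphism $\psi \colon X \to X'$ of axets sends closed subsets to closed subsets in both the forward and backward directions---and then derive the closure identities by squeezing, applying this principle on the source side for one inclusion and on the target side for the other. Both halves of the principle follow from a one-line computation using the morphism axiom $\psi(y\tau_x(s)) = \psi(y)\tau'_{\psi(x)}(s)$.

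For the forward direction, if $Y \subseteq X$ is closed and $x, y \in Y$ with $x' = \psi(x)$, $y' = \psi(y)$, then
\[
y'\tau'_{x'}(s) = \psi(y\tau_x(s)) \in \psi(Y)
\]
because $y\tau_x(s) \in Y$, and every pair of elements of $\psi(Y)$ arises this way; hence $\psi(Y)$ is closed. For the backward direction, if $Y' \subseteq X'$ is closed and $x, y \in \psi^{-1}(Y')$, then
\[
\psi(y\tau_x(s)) = \psi(y)\tau'_{\psi(x)}(s) \in Y'
\]
because $\psi(x), \psi(y) \in Y'$, so $y\tau_x(s) \in \psi^{-1}(Y')$, and $\psi^{-1}(Y')$ is closed.

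Part (1) then follows in two lines. On one hand, $\psi(\la Z \ra)$ is closed by the forward principle and contains $\psi(Z)$, so $\la \psi(Z) \ra \subseteq \psi(\la Z \ra)$. On the other hand, $\psi^{-1}(\la \psi(Z) \ra)$ is closed by the backward principle and contains $Z$, hence contains $\la Z \ra$, giving $\psi(\la Z \ra) \subseteq \la \psi(Z) \ra$.

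Part (2), which the paper reads as ``preimages of closed sets are closed'', is directly the backward principle applied to the closed set $\la Z' \ra$; combined with $\psi^{-1}(Z') \subseteq \psi^{-1}(\la Z' \ra)$ this yields $\la \psi^{-1}(Z') \ra \subseteq \psi^{-1}(\la Z' \ra)$. I do not anticipate any real obstacle in carrying out these arguments, since everything reduces to the defining morphism axiom; the only subtlety worth flagging is that the reverse inclusion in (2) can fail when $\psi$ is not surjective---elements of $\la Z' \ra$ may lie in $\psi(X)$ even though no element of $Z'$ does---so the genuine content of (2) is really the closedness of the preimage rather than a full equality of closures.
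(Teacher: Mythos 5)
Your proof is correct and follows exactly the route the paper intends: the paper omits the proof, remarking only that it ``repeats the previous arguments,'' and those arguments are precisely your two one-line computations (the forward principle is the same calculation used to show $\psi(X)$ is closed in the First Isomorphism Theorem, and the backward principle is its mirror image). The squeeze deriving part (1) from the two principles is complete and needs no surjectivity hypothesis. Your caveat about part (2) is also genuinely correct: as literally stated the equality fails, e.g.\ for the morphism from a one-point axet into $X(3)$ hitting the vertex $2$, with $Z'=\{0,1\}$, where $\psi^{-1}(\la Z'\ra)$ is the whole point but $\la\psi^{-1}(Z')\ra=\la\emptyset\ra=\emptyset$; the paper's own gloss (``full preimages of closed sets are closed'') confirms that the backward principle is the intended content. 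One small addendum: if one assumes $Z'\subseteq\im\psi$, the full equality in (2) can be recovered, because the closure of a union of fibres is again a union of fibres (if $\psi(y)=\psi(u\tau_w(s))$ then $y\tau_w(s^{-1})$ lies in the fibre of $\psi(u)$, so an induction on the closure process applies), whence $\la\psi^{-1}(Z')\ra=\psi^{-1}(\psi(\la\psi^{-1}(Z')\ra))=\psi^{-1}(\la Z'\ra)$ by part (1).
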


The proof of this is straightforward and in a large part repeats the previous arguments.  As a corollary of (2), we have that every congruence class $[x]$ is a subaxet of $X$ as it is the preimage of the single point axet $\{\psi(x)\}$ in $X'$.

Let $\psi\colon X \to X'$ be a morphism and $\sim$ be the corresponding congruence.  We say that a subaxet $Y$ of $X$ is \emph{complete} with respect to the morphism $\psi$ if, for every $y \in Y$, its full congruence class $[y]$ with respect to $\sim$ is contained in $Y$.  Note that a complete subaxet is just a union of congruence classes which form a subaxet in the factor axet.  This immediately gives us the following.

\begin{theorem}[Correspondence Theorem]
Suppose that $\psi \colon X\to X'$ is a morphism of axets. There is a bijection between the set of complete subaxets of $X$ and the subaxets of $\im \psi$.
\end{theorem}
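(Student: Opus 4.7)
The plan is to exhibit mutually inverse maps $Y \mapsto \psi(Y)$ and $Y' \mapsto \psi^{-1}(Y')$ between the collection of complete subaxets of $X$ and the collection of subaxets of $\im\psi$, using the two parts of the preceding proposition to handle the closedness conditions on both sides.

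First I would check that each direction lands in the right place. If $Y$ is a complete subaxet of $X$, then by part (1) of the preceding proposition $\psi(Y) = \psi(\la Y\ra) = \la \psi(Y)\ra$, so $\psi(Y)$ is a closed subset of $X'$ contained in $\im\psi$, and is thus a subaxet of $\im\psi$. Conversely, if $Y'$ is a subaxet of $\im\psi$, then part (2) of the preceding proposition gives $\psi^{-1}(Y') = \la \psi^{-1}(Y')\ra$, so $\psi^{-1}(Y')$ is a closed subset of $X$, hence a subaxet. Moreover, $\psi^{-1}(Y')$ is complete: if $y\in\psi^{-1}(Y')$ and $z\sim y$, then $\psi(z)=\psi(y)\in Y'$, so $z\in\psi^{-1}(Y')$, and therefore $[y]\subseteq\psi^{-1}(Y')$.

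Next I would verify that the two maps are mutually inverse. For any subaxet $Y'\subseteq\im\psi$, the surjectivity of $\psi\colon X\to\im\psi$ onto its image yields $\psi(\psi^{-1}(Y'))=Y'$ immediately. For a complete subaxet $Y\subseteq X$, the inclusion $Y\subseteq\psi^{-1}(\psi(Y))$ is trivial; for the reverse, if $x\in\psi^{-1}(\psi(Y))$ then $\psi(x)=\psi(y)$ for some $y\in Y$, so $x\sim y$, and by completeness $x\in[y]\subseteq Y$. This uses completeness in precisely the only place it is needed.

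I do not anticipate a serious obstacle here; the statement is essentially formal once one has the two closure properties of $\psi$ from the preceding proposition. The only mildly delicate point, which I would make sure to state clearly, is that completeness of $Y$ is exactly what is required so that $\psi^{-1}(\psi(Y))=Y$: without it one merely obtains that $\psi^{-1}(\psi(Y))$ is the saturation of $Y$ under $\sim$, and the bijection would fail on subaxets that are not unions of congruence classes.
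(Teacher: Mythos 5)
Your proof is correct and follows exactly the route the paper has in mind: the paper omits the argument entirely, asserting that the theorem follows "immediately" from the preceding proposition (images and preimages of closed sets are closed) together with the observation that a complete subaxet is a union of congruence classes. Your write-up simply makes that explicit, correctly isolating completeness as the hypothesis needed for $\psi^{-1}(\psi(Y))=Y$.
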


This is clearly very similar to the Correspondence Theorem for group homomorphisms.  The later Theorem also contains a statement about the correspondence for normal subgroups.  This can again be generalised for axets.  We say that a congruence $ \approx$ on $X$ is a \emph{coarsening} of $\sim$ if it is a coarsening of $\sim$ as an equivalence relation, \ie if $x \sim y$, then $x \approx y$, for all $x, y \in X$.  Then, the `normal correspondence' in the case of axets states that the coarsenings of $\sim$ are in a natural bijection with the congruences of $\im \psi$.

\medskip

For the final topic of this subsection, let us discuss the orbits of $G$ on $X$. Suppose that $Y$ is one 
of the orbits. Then $Y$ is closed and $G_Y=G$, so $(G,Y,\tau|_{Y\times S})$ is a subaxet. In this way 
$X$ decomposes as a union of disjoint subaxets corresponding to the orbits of $G$ on $X$. Let us 
formalise this union operation as follows.

\begin{definition}
Suppose $(G,X_i,\tau_i)$, $i\in I$, are axets with the same group $G$. The \emph{union} of these axets 
is the axet $(G,X,\tau)$, where $X$ is the disjoint union of all $X_i$ and $\tau$, similarly, is the union of all maps $\tau_i$.
\end{definition}

Looking at Definition \ref{axet}, it is easy to see that the union $X$ is indeed an axet.

%%%%%%%%%%%%%%%%%%%%%%%%%%%%%%%%%%%%%%

\subsection{$2$-generated $C_2$-axets}

We are most interested in $C_2$-axets and a natural place to start is to consider $2$-generated axets.  Since $S = C_2$, we will write $\tau_x$ for $\tau_x(s)$, where $s$ is the generator of $S$, and say that $\tau$ is a map from $X$ to $G$.

Suppose that $X = \la a,b\ra$ is a $2$-generated $C_2$-axet, where $a \neq b$.  Then, $\tau_a$ and $\tau_b$ have order at most $2$ and hence $\Miy(X)$ is either trivial, $C_2$, or a dihedral group $D_{2n}$, or $D_\infty$.

\begin{example}
\begin{enumerate}
\item For a finite $n \geq 1$, consider the action of $G = D_{2n}$ on the regular $n$-gon.  Let $X = X(n)$ be the set of all vertices of the $n$-gon; this has size $n$.  We turn $(G, X)$ into a $C_2$-axet by defining $\tau_x$ to be the generator of the stabiliser $G_x \cong C_2$ for each $x \in X$.  We will call $X$ the \emph{$n$-gonal axet}.
\item For the infinite case, $G = D_\infty$ acts transitively on $X := X(\infty) = \mathbb{Z}$.  Set $\tau_x$ to be the reflection in $x$ for each $x \in X$.  We will call $X$ the \emph{$\infty$-gonal axet}, or the \emph{apeirogon axet}.
\end{enumerate}
\end{example}

From now on, the notation $X(n)$ includes the case of $n = \infty$.  Note that $X(n)$ is faithful provided $n \geq 3$.  Let us record a few further facts about the action of the Miyamoto group on the axet $X(n)$.

\begin{lemma}\label{Xorbit}
Let $X = X(n)$.
\begin{enumerate}
\item If $n$ is finite and odd, then $\Miy(X)  = G = D_{2n}$ is transitive on $X$.
\item If $n$ is finite and even, then $\Miy(X) \cong D_{n}$ has index two in $G = D_{2n}$ and has two orbits of equal length $\frac{n}{2}$ on $X$.   If $X = \la a, b\ra$ for axes $a, b \in X$, then $a$ and $b$ are in different orbits.
\item If $n = \infty$, then $\Miy(X) \cong D_\infty$ has index two in $G = D_\infty$ and it has two infinite orbits on $X$.  Again, if $X = \la a, b\ra$, then $a$ and $b$ are in different orbits.\footnote{So in this context, infinity appears to be even!}
\end{enumerate}
\end{lemma}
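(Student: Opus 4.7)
The plan is to work through each of the three cases by directly analysing which reflections inside $G = D_{2n}$ fix a vertex of the $n$-gon, since these are precisely the maps $\tau_x$ and so generate $\Miy(X)$.

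For part (1), when $n$ is odd, every reflection in $D_{2n}$ has an axis which passes through exactly one vertex and the midpoint of the opposite edge; in particular, every reflection fixes some vertex $x \in X$. Hence the $n$ maps $\tau_x$ exhaust all reflections of $G$, so $\Miy(X) = G = D_{2n}$. Transitivity on $X$ is then the standard fact that $D_{2n}$ acts transitively on the vertices of the regular $n$-gon.

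For part (2), when $n$ is even, the reflections of $D_{2n}$ split into two classes of size $n/2$: those whose axis passes through two opposite vertices, and those whose axis passes through midpoints of opposite edges. Only the first type fix any vertex, and so $\{\tau_x : x \in X\}$ consists of exactly the $n/2$ vertex-type reflections. Two neighbouring such reflections compose to a rotation through $4\pi/n$, the minimal rotation preserving the bipartition of $X$ into its two sets of alternate vertices. The subgroup they generate is therefore a dihedral group of order $n$ of index $2$ in $G$, and its two orbits on $X$ are precisely the two alternating classes of vertices, each of length $n/2$. For the final claim, suppose for contradiction that $a$ and $b$ lie in the same $\Miy(X)$-orbit. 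Since every $\tau_x$ lies in $\Miy(X)$ and so preserves $\Miy(X)$-orbits, the closed subaxet $\la a, b\ra$ is contained in that single orbit, of size $n/2$, contradicting $\la a, b\ra = X$ of size $n$.

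For part (3), set $G = D_\infty$ to act on $\Z$ via maps $x \mapsto \pm x + m$, $m \in \Z$; the reflections fixing a point of $\Z$ are then exactly the maps $\tau_k \colon x \mapsto 2k - x$ for $k \in \Z$, and $\tau_j\tau_k$ is the translation $x \mapsto x + 2(j-k)$. Thus $\Miy(X)$ consists of those elements of $G$ whose translation part is even, giving an index $2$ subgroup isomorphic to $D_\infty$ with two infinite orbits, namely the even and the odd integers. The argument that $a$ and $b$ lie in distinct orbits is identical to case (2): the closure of $\{a,b\}$ under $T_a$ and $T_b$ sits inside the union of the $\Miy(X)$-orbits of $a$ and $b$, so if these orbits coincided, $\la a, b\ra$ would be a proper subset of $X$. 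There is no real obstacle here; the only point requiring care is the bookkeeping of the two reflection classes in the even case, which drives both the index-$2$ conclusion and the separation of $a$ and $b$ into different orbits.
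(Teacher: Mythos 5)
Your proof is correct. The paper records this lemma without proof, and your direct computation with the two conjugacy classes of reflections in $D_{2n}$ (respectively the parity of the translation part in $D_\infty$), together with the observation that the closure of $\{a,b\}$ is contained in the union of the $\Miy(X)$-orbits of $a$ and $b$, is exactly the standard argument the authors rely on implicitly and reuse later in their proof of the classification of $2$-generated $C_2$-axets.
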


We can consider the above examples to be \emph{vertex axets}.  In a similar way to above, we can define the \emph{edge axet} $X^\ast(n)$ which has as axes the edges of the $n$-gon with the natural action of $D_{2n}$ and for an edge $e$, $\tau^\ast_e$ being the generator of the stabiliser of $e$.  However, this is not a new axet as the edge axet is isomorphic (in a sense, dual) to the vertex axet.

If $n$ is even or infinite, then the involutions in the image of $\tau^\ast$ form a different (dual) class in $G$ to those in the image of $\tau$, and so the isomorphism between $X^\ast(n)$ and $X(n)$ corresponds to an outer automorphism of $G$. If $n$ is odd then $X^\ast(n)$ is isomorphic to $X(n)$ in a more direct way.  Namely, the isomorphism $\psi$ sends every edge to the opposite vertex and this corresponds to the identity automorphism of $G$.

We could have also defined an axet using vertices and edges, but in fact this does not give us a new axet. Indeed, what we get is the union of $X(n)$ and its dual $X^\ast(n)$, and this union is isomorphic to the $2n$-gonal vertex axet $X(2n)$.

Note that $G$ acts transitively on $X = X(n)$ and recall that it acts faithfully if and only if $n \geq 3$.  Also, by Lemma \ref{Xorbit}, $\Miy(X) = G$ if and only if $n$ is odd.  When $n$ is even, or infinite, then $\Miy(X) \cong D_n$ is of index $2$ in $G$.

\begin{lemma}[Folding Lemma]
Let $(G, X, \tau)$ be an $S$-axet and suppose that $\sim$ is a congruence on $X$ with the additional condition that if $x, x' \in X$ with $x \sim x'$, then $\tau_x(s) = \tau_{x'}(s)$ for all $s \in S$.  Let $\tilde{X} = X/\!\sim$ be the set of equivalence classes and define $\tilde \tau \colon \tilde X \times S \to G$ naturally.  Then $(G, \tilde X, \tilde \tau)$ is an $S$-axet and $\Miy(\tilde X) = \Miy(X)$.
\end{lemma}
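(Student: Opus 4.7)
The plan is to verify the three axet axioms for $(G,\tilde X,\tilde\tau)$ and then identify the Miyamoto groups. The key observation that makes everything work is that, under the definition $\tilde\tau_{[x]}(s) := \tau_x(s)$, the elements $\tilde\tau_{[x]}(s)$ and $\tau_x(s)$ are literally the same element of $G$, so the whole structure simply transports from $X$ to $\tilde X$.

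First I would check well-definedness of $\tilde\tau$: this is precisely the role of the additional hypothesis, since if $x \sim x'$ the values $\tau_x(s)$ and $\tau_{x'}(s)$ coincide. Next, for the $G$-action on $\tilde X$ to make sense, $\sim$ must be $G$-invariant. Lemma \ref{axetcongruence}(1) gives invariance under $\Miy(X)$, and the additional hypothesis combined with axet axiom (3), which reads $\tau_{xg}(s) = \tau_x(s)^g$, guarantees that the fibres of the map $x \mapsto \tau_x$ are $G$-stable. Since the $\sim$-classes are contained in these fibres, this is the setting in which the Folding Lemma is applied. I would record this invariance as a preliminary step.

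Granted $G$-invariance, the three axet axioms are essentially automatic. For axiom (1): $\tau_x(s)$ stabilises $x$, so a fortiori it stabilises $[x]$, giving $\tilde\tau_{[x]}(s) \in G_{[x]}$. Axiom (2) is inherited since $\tilde\tau_{[x]}$ and $\tau_x$ coincide as maps $S \to G$. For axiom (3), combining $[x]g = [xg]$ from Lemma \ref{axetcongruence}(1) with the axet axiom for $\tau$ yields
\[
\tilde\tau_{[x]g}(s) = \tilde\tau_{[xg]}(s) = \tau_{xg}(s) = \tau_x(s)^g = \tilde\tau_{[x]}(s)^g.
\]

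Finally, the Miyamoto group equality is immediate: the generating set $\{\tilde\tau_{[x]}(s) : [x] \in \tilde X,\ s \in S\}$ of $\Miy(\tilde X)$ coincides, as a subset of $G$, with the generating set $\{\tau_x(s) : x \in X,\ s \in S\}$ of $\Miy(X)$, giving $\Miy(\tilde X) = \Miy(X)$. The only non-trivial point in the whole argument is ensuring the $G$-action on $\tilde X$ is well-defined; once that has been verified, every other step is purely formal bookkeeping, carried by the identity $\tilde\tau_{[x]}(s) = \tau_x(s)$ at the level of elements of $G$.
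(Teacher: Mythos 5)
Your verification of the three axioms, the well-definedness of $\tilde\tau$, and the equality $\Miy(\tilde X)=\Miy(X)$ are all fine; the problem is the preliminary step, namely that $\sim$ is invariant under all of $G$ so that $[x]g:=[xg]$ is a well-defined action. That is the only genuinely delicate point, and your argument for it does not work. You observe that the fibres of $x\mapsto\tau_x$ are permuted by $G$ (true, by axiom (3)) and that the $\sim$-classes are contained in these fibres (true, by the extra hypothesis), and from this you conclude that $\sim$ is $G$-invariant. That inference is invalid: a partition refining a $G$-stable partition need not itself be $G$-stable. Lemma \ref{axetcongruence}(1) only gives invariance under $\Miy(X)$, and the congruence axiom only involves the elements $\tau_z(s)$, so nothing forces invariance under elements of $G$ outside $\Miy(X)$.

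The failure is not hypothetical: it occurs in the very construction this lemma is used for. Take $X=X(4k)$ with $G=D_{8k}$, and let $\sim$ pair antipodal vertices in one of the two orbits of $\Miy(X)=D_{4k}$ while being trivial on the other. This is a congruence satisfying the extra hypothesis (antipodal vertices of the $4k$-gon have the same stabilising reflection), but it is not preserved by a one-step rotation, which interchanges the two $\Miy(X)$-orbits; so $[x]g=[xg]$ is not well defined for $G=D_{8k}$. This is exactly why the construction of $X'(3k)$ first passes to the axet $(\Miy(X),X,\tau)$ before folding. The fix is either to add the hypothesis that $\sim$ is $G$-invariant, or to read the lemma with $G$ replaced by $\Miy(X)$ (equivalently, to work with the factor axet of the preceding definition); in either case Lemma \ref{axetcongruence}(1) supplies what is needed and the rest of your argument goes through verbatim.
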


Clearly, $\tilde X$ is isomorphic to the factor axet $X/\!\sim$.  The map from this lemma will be referred to as the \emph{folding map} and it is a surjective morphism from $X$ to $\tilde X$.

Note that although we have the same group $G$ for $X$ and $\tilde X$, the kernel of the action may have increased by folding.  Since the action of $G$ on $\tilde{X}$ may not be faithful, $\tilde{X}$ is not the factor axet, but rather is isomorphic to it.  Note that the folding map, being a morphism, preserves closure and generation.

When $n$ is finite and even, we have a folding congruence on $X(n)$.  Indeed, we can make opposite vertices in the $n$-gon equivalent.  However, this folding does not give us a new example as $\tilde X(n)$ is isomorphic to $X(\frac{n}{2})$.  Yet, we can still use the folding lemma to give us a new example.

\begin{example}
Let $n = 3k$ be finite and consider $X = X(4k)$ formed from the $4k$-gon.  As above, $G = D_{8k}$, but $\Miy(X) = D_{4k}$, a subgroup of index $2$.  So, we may consider the axet $(\Miy(X), X, \tau)$ which now has two orbits under the action of $\Miy(X) = D_{4k}$, each of length $2k$.  Note that on either orbit we have a folding congruence given by pairing opposite vertices in the $4k$-gon.  Let $X' := X'(3k) = X'(n)$ be the set formed by folding one of the orbits of axes.  Then, $(D_{4k}, X', \tau')$ is an axet with $n = 3k$ axes.  Moreover, $\Miy(X') = \Miy(X) = D_{4k}$ and this action is faithful if $k \neq 1$.  We call $X'$ a \emph{skew $2$-generated axet}.
\end{example}

Let us again record the basic properties of this new axet.

\begin{lemma}\label{X'orbit}
If $X = X'(3k)$, then $\Miy(X) \cong D_{4k}$ and it has orbits of length $2k$ and $k$ on $X$.  Furthermore, if $X = \la a, b\ra$, then $a$ and $b$ are in different orbits.
\end{lemma}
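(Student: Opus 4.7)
The plan is to derive both assertions from the construction of $X'(3k)$ as a folding of $X(4k)$, combining Lemma~\ref{Xorbit}(2) with the Folding Lemma. By Lemma~\ref{Xorbit}(2), $\Miy(X(4k)) = D_{4k}$ acts on $X(4k)$ with exactly two orbits, each of length $2k$. In the $4k$-gon model, opposite vertices $v$ and $v+2k$ share the same reflection, so $\tau_v = \tau_{v+2k}$; moreover, since $2k$ is even, opposite vertices lie in the same $\Miy(X(4k))$-orbit. Consequently, pairing opposite vertices within one chosen orbit yields a congruence on the axet $X(4k)$ satisfying the hypothesis of the Folding Lemma. Applying that lemma gives $\Miy(X'(3k)) = \Miy(X(4k)) = D_{4k}$ immediately; the unfolded orbit retains length $2k$, and the folded orbit collapses to length $k$, which establishes the orbit lengths.

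For the generating statement I would argue by contradiction. Suppose $X'(3k) = \la a, b\ra$ but that $a$ and $b$ lie in a common $\Miy(X'(3k))$-orbit. Choose preimages $a', b' \in X(4k)$ under the folding map $\psi$ (either lift will do for an axis in the folded orbit). Since $\psi$ is a surjective morphism and the two Miyamoto groups coincide, the $\Miy(X(4k))$-orbits on $X(4k)$ map onto the $\Miy(X'(3k))$-orbits on $X'(3k)$; hence $a'$ and $b'$ lie in a common $\Miy(X(4k))$-orbit. By Lemma~\ref{Xorbit}(2) this forces $\la a', b' \ra \subsetneq X(4k)$. In fact, because $T_{a'}$ and $T_{b'}$ lie in $\Miy(X(4k))$ and preserve orbits, the closure $\la a', b'\ra$ is contained entirely in the common orbit of $a'$ and $b'$.

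Applying the closure-preservation property of axet morphisms to $\psi$ then gives
\[
\la a, b\ra \;=\; \la \psi(a'), \psi(b')\ra \;=\; \psi(\la a', b'\ra),
\]
which is therefore contained in a single $\Miy(X'(3k))$-orbit and hence is a proper subset of $X'(3k)$, contradicting $X'(3k) = \la a, b\ra$. The main technical point to nail down is that the restricted folding of a single orbit does give an honest congruence on the axet $X(4k)$ in the sense of the Folding Lemma; but this reduces to the equality $\tau_v = \tau_{v+2k}$ noted above, since once the Miyamoto maps agree on paired axes the congruence condition is automatic. Everything else is a routine application of the axet machinery (closure preservation under morphisms, equivariance of the folding map) developed earlier in the section.
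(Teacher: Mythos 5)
Your proof is correct and follows the same route the paper leaves implicit: $X'(3k)$ is by definition the folding of one $\Miy(X(4k))$-orbit, so the group and orbit lengths fall out of Lemma~\ref{Xorbit}(2) together with the Folding Lemma, and the congruence check reduces to $\tau_v=\tau_{v+2k}$ plus the $G$-invariance of the opposite-vertex pairing, exactly as you note. For the generation statement your lift back to $X(4k)$ works, though it can be shortened: since every $\tau_y$ lies in $\Miy(X'(3k))$ and therefore preserves its own orbits, the closure of two axes in a common orbit of $X'(3k)$ is already contained in that orbit, so no preimages are needed.
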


Note that if we fold the other orbit instead, this results in an isomorphic (dual) version of the same axet, which is conjugate to $X'(n)$ via an element of $G = D_{8k}$.  It is also clear that $X'(n)$ is not isomorphic to $X(n)$ as the orbit structure under the Miyamoto group is different.

We may now classify the $2$-generated faithful $C_2$-axets.

\begin{theorem}\label{2genaxet}
Let $X = \la a,b\ra$ be a $2$-generated $C_2$-axet with $n$ axes, where $n \geq 2$, or $n = \infty$.  Then $X$ is isomorphic to either
\begin{enumerate}
\item $X(n)$, or
\item $X'(n)$, where $n = 3k$ for $ k \in \mathbb{N}$.
\end{enumerate}
\end{theorem}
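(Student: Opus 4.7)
By Proposition \ref{core isomorphic}, I may assume $(G, X, \tau)$ is a core axet, so $G = \Miy(X) = \la \tau_a, \tau_b \ra$ acts faithfully on $X$. Being generated by at most two involutions, $G$ is trivial, $C_2$, dihedral $D_{2m}$ for some finite $m \geq 2$, or $D_\infty$. The trivial case gives $X = \{a,b\} \cong X(2)$ at once. If $G \cong C_2$, faithfulness forces exactly one of $\tau_a, \tau_b$, say $\tau_b$, to be trivial (the case $\tau_a = \tau_b \neq 1$ would leave $\tau_a$ fixing both $a$ and $b$, hence all of $X$, contradicting faithfulness); setting $c := b\tau_a$, axiom (3) yields $\tau_c = \tau_b^{\tau_a} = 1$, and the closure is $X = \{a,b,c\}$, isomorphic to $X'(3)$, covering case (2) with $k = 1$.

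The main case is $G = D_{2m}$ with $m \in \N_{\geq 2} \cup \{\infty\}$, where I set $t = \tau_a\tau_b$ of order $m$. Axiom (3) applied with $g \in G_x$ yields the containment $G_x \leq C_G(\tau_x)$. In $D_{2m}$, $C_G(\tau_x) = \la \tau_x \ra$ for $m$ odd finite or $m=\infty$, whereas $C_G(\tau_x) = \la \tau_x, t^{m/2}\ra$ is a Klein four group for $m$ even. Since $\tau_x \in G_x$, the orbit of $x$ has size $m$ (stabiliser minimal) or, only when $m$ is even, $m/2$ (stabiliser maximal). Moreover, $a$ and $b$ lie in a common $G$-orbit only if $\tau_a, \tau_b$ are $G$-conjugate; the relation $t\tau_a = \tau_a t^{-1}$ combined with $\tau_b = \tau_a t$ shows that reflections in $D_{2m}$ form a single conjugacy class if and only if $m$ is odd finite, and otherwise split into two classes separating $\tau_a$ from $\tau_b$.

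Combining these observations gives the complete enumeration. For $m$ odd finite, both stabilisers are minimal, and either the orbits of $a$ and $b$ coincide, giving $|X| = m$ and $X \cong X(m)$, or are distinct, giving $|X| = 2m$ and $X \cong X(2m)$. For $m$ even finite, the orbits are distinct, and the subcase (min,\,min) yields $|X| = 2m$ with $X \cong X(2m)$, the two mixed subcases yield $|X| = 3m/2 = 3k$ with $k = m/2$ and $X \cong X'(3k)$, and the (max,\,max) subcase is excluded because $t^{m/2} \in G_a \cap G_b$ then fixes all of $\la a,b\ra = X$, contradicting faithfulness. For $m=\infty$, minimal stabilisers and distinct infinite orbits give $X \cong X(\infty)$. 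The boundary $m=2$ requires a small separate check because $G$ is abelian and the mixed subcase collapses back to the $C_2$ analysis above.

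I expect the main obstacle is not the orbit enumeration but verifying that each identified axet is isomorphic as a $C_2$-axet (not merely as a $G$-set) to the claimed $X(n)$ or $X'(3k)$. This amounts to constructing a bijection $\psi$ and checking the Miyamoto condition $\psi(y\tau_x) = \psi(y)\tau'_{\psi(x)}$. For the $X(n)$-type identifications this is immediate from the dihedral vertex action. For $X'(3k)$, the bijection is built from the explicit folding of $X(4k)$ via the Folding Lemma, and the fact that an axet isomorphism need not come from a group isomorphism on the ambient $G$ is essential.
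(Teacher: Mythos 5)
Your proposal is correct and follows essentially the same route as the paper: reduce to the core axet, split on whether $G$ is trivial, $C_2$, $D_{2m}$ or $D_\infty$, use axiom (3) to bound orbit sizes (your centralizer containment $G_x\leq C_G(\tau_x)$ is just the paper's conjugacy-class-size argument in different clothing), rule out the double-short-orbit case via the central rotation, and identify the surviving configurations with $X(n)$, $X(2m)=X(m)\cup X^\ast(m)$, or the folded union $X'(\tfrac{3}{2}m)$. The small cases ($G$ trivial, $G\cong C_2$ giving $X'(3)$, and the $m=2$ boundary forcing both orbits to have length $2$) are handled the same way as in the paper.
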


\begin{proof}
Without loss of generality, by Proposition \ref{core isomorphic}, we may assume that $X$ is equal to its core and so $G = \Miy(X) = \AMiy(X)$ is faithful on $X$.  Since $X$ is $2$-generated and $\tau_a$ and $\tau_b$ have order at most $2$, $G$ is either trivial, $C_2$, a dihedral group $D_{2m}$, or $D_\infty$.  Observe that, by axiom (3) for axets, $\tau$ is a morphism from the $G$-set $X$ to the $G$-set of involutions from $G$ (see Example \ref{ex:axet}).  Also, by definition, $X = a^{G} \cup b^{G}$.

We start with the finite case.  Suppose first that $G \cong D_{2m}$, with $m \geq 2$ and so $\tau_a$ and $\tau_b$ both have order $2$.  Since $\tau_a \in G_a$, by the orbit-stabiliser theorem, $a^{G}$ has size at most $m$.  Similarly for $b$.  

If $m$ is odd, then there is one conjugacy class of involution in $D_{2m}$ of size $m$.  By axiom (3), $|a^G| \geq |\{ \tau_{a^g} : g \in G \}| = |\{\tau_a^g : g \in G \}| = |\tau_a^G| = m$.
%Clearly, $|a^G|$ cannot be smaller that $|\tau_a^G|=m$. 
So, $|a^{G}| = m$ and, similarly, $|b^{G}| = m$. Furthermore, these orbits are either equal, or disjoint.  Let $Y$ be one of these orbits.  Then $\tau$ is an isomorphism of axets from $Y$ to the axet whose set is the set of involutions $\tau(Y)$.  Note that our example $X(m)$ is also isomorphic to the axet of involutions in $D_{2m}$ when $m$ is odd.  Hence, if $a^{G} = b^{G}$, then $X$ is isomorphic to $X(m)$.  On the other hand, if $a^{G}$ and $b^{G}$ are disjoint, then each is isomorphic to $X(m)$ and so $X$ is their union. Since $m$ is odd, $X(m)$ is the same as its dual and so $X=X(m)\cup X^\ast(m)$, which is isomorphic to $X(2m)$.

If $m$ is even, then the generating involutions $\tau_a$ and $\tau_b$ are in different conjugacy classes of $G$, each of size $\frac{m}{2}$.  By axiom (3), $a^{G}$ and $b^{G}$ must also be disjoint and can have size $m$, or $\frac{m}{2}$.  Suppose first that $X$ has two orbits of length $m$.  Then one orbit is isomorphic to the vertex axet $X(m)$ of the $m$-gon and the other is isomorphic to the edge axet $X^\ast(m)$ of the same $m$-gon.  So their union $X$ is isomorphic to $X(2m)$.  Now suppose that one orbit $Y$ has length $\frac{m}{2}$, then $G$ induces $D_m$ on $Y$, and so $Y$ is the folded $X(m)$, isomorphic to $X(\frac{m}{2})$.  If the other orbit $Z$ is of length $m$ then $Z=X^\ast(m)$, because $\tau_b$ is not in the same class as $\tau_a$. Thus, $X$ is the union of the folded $X(m)$ and $X^\ast(m)$, which is precisely $X'(\frac{3}{2}m)$. (Note that if $G \cong D_4=V_4$, then $X$ being faithful implies that both orbits must have length $2$.)  Finally, suppose that both orbits are of length $\frac{m}{2}$.  Then $(\tau_a\tau_b)^\frac{m}{2}$ fixes each axis and so it is in the kernel, a contradiction to $G$ being faithful.

We now deal with the remaining small cases.  If the Miyamoto group is trivial, then clearly $X = \{a, b\}$ and $X$ is isomorphic to $X(2)$.  Suppose that $G \cong C_2 = \la g \ra$ and, without loss of generality, let $\tau_a = g$.  Then, $\tau_a$ must act non-trivially on $b$ and so $b^{G} = \{ b, b'\}$.  Since $\tau_x \in G_x$, $a^{G} = \{ a\}$ and also $\tau_b = \tau_{b'} = 1$.  In this case, $X$ is isomorphic to $X'(3)$.

Finally, the infinite case is similar to the generic even case.  The involutions $\tau_a$ and $\tau_b$ are in two different $G$-conjugacy classes and hence $a^{G}$ and $b^{G}$ are two disjoint orbits.  One orbit is isomorphic to $X(\infty)$ and the other to $X^*(\infty)$.  Hence their union $X$ is isomorphic to $X(\infty)$.
\end{proof}

\begin{definition}\label{def:skew}
We will call an axet $X$ \emph{skew} if it contains a $2$-generated subaxet isomorphic to a skew axet $X'(n)$.  Otherwise, we will call $X$ \emph{regular}.
\end{definition}

%%%%%%%%%%%%%%%%%%%%%%%%%%%%%%%%%%%%%%%%%

\subsection{Some $3$-generated $C_2$-axets}\label{sec:1+n}

We consider some $3$-generated $C_2$-axets which we will return to later in the paper. Let us first define the \emph{trivial one point extension} of an $S$-axet $X$
as the $S$-axet on the $G$-set $Y=\{a\}\cup X$, where $a\notin X$ and $G$ fixes $a$ and acts on $X$ as before. Similarly, we extend the Miyamoto map from $X\times S$ to $Y\times S$ 
by setting $\tau_a(s)=1$ for all $s\in S$. Note that $\Miy(Y)\cong\Miy(X)$ and $Y$ is faithful if and only if $X$ is faithful.

\begin{example}
\begin{enumerate}
\item Suppose $n$ is arbitrary (including $\infty$).  Let $X_1 = X_1(1+n) = \{ a\} \cup X(n)$ be the \emph{trivial one point extension} of $X = X(n)$.
Since $X$ is faithful when $n\geq 3$, so is $X_1$.

\item Let $n=2k$ be finite and even.  Again consider a one point extension $X_2 = X_2(1+n) = \{a\} \cup X(n)$ but this time define $\tau_a \in D_{2n}$ to be the rotation of the $n$-gon through $180^\circ$ (if $n \neq 2$, then it is the unique central involution).  We call this the \emph{central one point extension} of $X = X(n)$.  Recall that $\Miy(X(n)) = D_n$ and there are two $\Miy(X)$-orbits.  Now $\Miy(X_2) = G = D_{2n}$ if and only if $k$ is odd (if $k$ is even, then $\Miy(X_2) = D_n$ is of index $2$).  Again $X_2$ is faithful if and only if $X$ is faithful, i.e.\ for $n \neq 2$.
\end{enumerate}
\end{example}

The subaxets of $X_1(1+n)$ are $Y$ and $Y \cup \{a\}$, where $Y$ is a subaxet of $X(n)$.  In particular, $\{ a, x\} \cong X(2)$ for each $x \in X_1 - \{a \}$.  Whereas in $X_2(1+n)$, $\la a, x\ra \cong X'(3)$ for every $x \in X_2 - \{a \}$.  So $X_1(1+n)$ is regular and $X_2(1+n)$ is skew.

\begin{proposition}\label{1+n}
Let $X = \la a,b,c \ra$ be a $3$-generated axet which has two orbits under its Miyamoto group, $\{a\}$ and $Y := X - \{a\}$.  Then $X$ is isomorphic to either
\begin{enumerate}
\item $X_1(1+n)$, with $n$ finite and odd; or
\item $X_2(1+n)$, with $n = 2k$ and $k$ odd.
\end{enumerate}
In both cases, $\Miy(X) = D_{2n}$ unless $X$ is isomorphic to $X_2(1+2)$ and then $\Miy(X) = C_2$.
\end{proposition}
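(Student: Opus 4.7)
The plan is to split on whether $\tau_a$ is trivial, using as the key lever the fact that since $\{a\}$ is a singleton $\Miy(X)$-orbit we have $G_a = G$ where $G := \Miy(X)$, and hence by Lemma~\ref{axet basics} the involution $\tau_a$ lies in $Z(G)$. By Proposition~\ref{core isomorphic} I may replace $X$ by its core and assume $G$ acts faithfully on $X$. If $\tau_a = 1$, then $G = \la T_b, T_c\ra = \Miy(\la b,c\ra)$, so the nontrivial $G$-orbit $Y$ coincides with the $2$-generated subaxet $\la b,c\ra$; Theorem~\ref{2genaxet} identifies this as $X(m)$ or $X'(m)$, and Lemma~\ref{Xorbit} leaves only $X(n)$ with $n$ finite and odd as the Miyamoto-transitive option (the hypothesis $b\neq c$ pushes $n \geq 3$), yielding $X \cong X_1(1+n)$ and $\Miy(X) = D_{2n}$.

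If $\tau_a \neq 1$, set $Z := \la b, c\ra$ and classify $Z$ via Theorem~\ref{2genaxet}; by centrality, $\tau_a$ commutes with $\Miy(Z)$ and so permutes the $\Miy(Z)$-orbits on $Y$. I split into two sub-cases. If $\tau_a$ preserves $Z$, then $Z \cup \{a\}$ is closed and must equal $X$, so $Y = Z$; transitivity of $G$ on $Y$ then forces $Z$ to consist of exactly two equal-size $\Miy(Z)$-orbits that $\tau_a$ swaps (fixing each setwise would leave two $G$-orbits). This rules out $Z \cong X'(m)$ (unequal orbit sizes), the single-orbit case $Z \cong X(m)$ with $m$ odd (whose centraliser in $\Sym(Z)$ is trivial, so $\tau_a$ would be forced to be $1$), and $Z \cong X(\infty)$ (again a trivial centraliser of $D_\infty$), leaving $Z = X(2l)$; a centraliser computation in the full axet automorphism group $D_{4l}$ identifies $\tau_a$ with the unique central involution and forces $l$ odd. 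If instead $\tau_a$ does not preserve $Z$, then $Z$ must be a single $\Miy(Z)$-orbit (a two-orbit $Z$ would be setwise fixed under the permutation of its orbits), so $Z = X(m)$ with $m$ odd and $Z^{\tau_a}$ is a disjoint $\Miy(Z)$-orbit; centrality gives that $Z \cup Z^{\tau_a} \cup \{a\}$ is closed and hence equals $X$, and this structure is identified directly with $X_2(1+2m)$. Both sub-cases yield $X \cong X_2(1+n)$ with $n = 2k$ and $k$ odd.

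For the Miyamoto group the formula $\Miy(X) = \la \Miy(Y), \tau_a\ra = D_{2n}$ reads off in general; the exceptional value for $X_2(1+2)$ arises because on the size-$2$ axet $X(2)$ both $\tau_{y_0}$ and $\tau_{y_1}$ fix each point and so act trivially in the core, collapsing $\Miy(X)$ to $\la \tau_a\ra \cong C_2$. The main obstacle I expect is the centraliser argument in the orbit-preserving sub-case, where one must pin down $\tau_a$ precisely inside the full axet automorphism group $D_{4l}$ and extract the parity condition $k$ odd.
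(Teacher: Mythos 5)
Your proposal is correct and follows essentially the same route as the paper's proof: pass to the core, use Lemma~\ref{axet basics} to place $\tau_a$ in $Z(G)$, split on $\tau_a=1$, invoke Theorem~\ref{2genaxet} with Lemmas~\ref{Xorbit} and~\ref{X'orbit}, and kill the transitive sub-case via the trivial centraliser of $\Miy(X(m))$, $m$ odd. The only cosmetic difference is that you organise the second case by whether $\tau_a$ preserves $\la b,c\ra$ (and make the centraliser computation in $D_{4l}$ forcing $k$ odd explicit), where the paper instead splits on whether $b,c$ lie in the same $\la\tau_b,\tau_c\ra$-orbit and, if so, replaces $c$ by $c^{\tau_a}$ --- these are equivalent.
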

\begin{proof}
Without loss of generality, by Proposition \ref{core isomorphic}, we may assume that $X$ is equal to its core.  Suppose first that $\tau_a =1$.  Then, $\la \tau_b, \tau_c \ra = \Miy(X)$ and $\la b,c \ra$ is a $2$-generated subaxet with one orbit of axes.  By Theorem \ref{2genaxet} and Lemmas \ref{Xorbit} and \ref{X'orbit}, $n$ is finite and odd and $Y \cong X(n)$, hence $X = X_1(1+n)$.

Suppose now that $\tau_a \neq 1$.  By Lemma \ref{axet basics}, $\tau_a \in Z(G)$.  In particular, $\la \tau_b, \tau_c \ra$ has index at most $2$ in $\Miy(X)$ and hence $\la \tau_b, \tau_c \ra$ has either a single orbit, or two orbits of equal size on $Y$.  In the first case, by Theorem \ref{2genaxet}, $Y$ is isomorphic to $X(n)$ with $n$ odd.  However, in this case the centraliser of $\Miy(Y)$ in the symmetric group on $Y$ is trivial and hence $\tau_a = 1$, a contradiction.

Thus $Y$ has two orbits $Y_1$ and $Y_2$ under the action of $\la \tau_b, \tau_c \ra$ and $\tau_a$ fuses these two orbits.  In particular, for $y_1 \in Y_1$, $y_2 = y_1^{\tau_a} \in Y_2$ and $\tau_{y_1} = \tau_{y_2}$.  If $b$ and $c$ are from different orbits, then $\la b, c\ra = Y$ and $\im \tau|_{Y_1} = \im \tau|_{Y_2}$, so by Theorem \ref{2genaxet} and Lemmas \ref{Xorbit} and \ref{X'orbit}, $Y$ is isomorphic to $X(2k)$ with $k$ odd.  Therefore, $X$ is isomorphic to $X_2(1+n)$ with $n = 2k$ and $k$ odd.  If $c$ is in the same orbit as $b$, then we swap $c$ for $c^{\tau_a}$ and the above argument applies.
\end{proof}

Note that $X_2(1+2)$ is in fact isomorphic to the $2$-generated axet $X'(3)$.  However, by Theorem \ref{2genaxet}, all the other axets in the above proposition are $3$-generated and not $2$-generated.

%%%%%%%%%%%%%%%%%%%%%%%%%%%%%%%%%%%%%%%%%

\section{Shapes}\label{sec:shapes}

We wish to introduce the notion of shape on an axet and we begin with a motivating example. Let $X = X(A)$ be the axet of 
an axial algebra $A$. Recall that when $A$ has a $T$-graded fusion law then its axet $X(A)$ is an $S$-axet for $S=T^*$. 
Every $2$-generated subaxet $Y\subset X=X(A)$ generates a $2$-generated sub axial algebra $A_Y = \lla Y \rra_A$ of $A$.  
The shape $\Theta(A)$ of $A$ is the collection of the embeddings $\theta_Y  = \mathrm{id}_Y \colon Y \to A_Y$, one for each such $Y$.  
If we have an abstract axet $X$, then there is no ambient algebra $A$, but we still wish to have a similar notion of 
shape.

Let us start by introducing additional notation and terminology. First of all, in this section it is assumed throughout 
that all axial algebras have a $T$-graded fusion law $\cF$ and all axets are $S$-axets, where $S=T^*$. 

\begin{definition}
An \emph{embedding} of an axet $Y$ into an axial algebra $A$ is an injective morphism $\theta\colon Y \to X=X(A)$. We say 
that an embedding is \emph{full} if $\theta$ is surjective, \ie it is an isomorphism.
\end{definition}

Given an embedding $\theta$, we will often identify an axis $a\in X$ with its image $\theta(a)$ in the algebra; i.e., we may write 
$a$ for $\theta(a)$.

We can now introduce shapes.  For an axet $X$, we denote the set of all $1$- and $2$-generated subaxets by $\mathcal{X}_2 = \mathcal{X}_2(X)$.

\begin{definition}\label{shape}
For an axet $(G,X,\tau)$, suppose that $\Theta = \{ \theta_Y : Y \in \mathcal{X}_2 \}$, where $\theta_Y \colon Y \to X(A_Y)$ 
is a full embedding of $Y$ into an axial algebra $A_Y$ for each $Y \in \mathcal{X}_2$.  We say that $\Theta$ is a \emph{shape} on $X$ if for all 
$g \in G$ and $Y, Z \in \mathcal{X}_2$ such that $Z^g \subseteq Y$, there exists an injective algebra homomorphism $\phi_{g,Z,Y} 
\colon A_Z \to A_Y$ such that
\begin{equation}\label{consistency req}
\theta_Y \circ \psi_g = \phi_{g,Z,Y} \circ \theta_Z,
\end{equation}
where $\psi_g \colon Z \to Y$, $z \mapsto z^g$ is the morphism induced by $g$.
\end{definition}

The consistency condition (\ref{consistency req}) in this definition is illustrated in the following commutative diagram:
\[
\begin{tikzcd}[row sep = large, column sep = large]
Z \arrow[r, hook, "\psi_g"] \arrow[d, swap, "\theta_Z"] & Y \arrow[d, "\theta_Y"] \\
A_Z \arrow[r, hook, swap, dashed,"\phi_{g,Z,Y}"]  & A_Y
\end{tikzcd}
\]

In a sense, a shape is an amalgam of small axial algebras organised around an axet in a consistent way.  Note that every $A_Y$ 
must be generated by at most two generators, so indeed each of the $A_Y$ is `small'.  The subaxet $Y$ is $1$-generated if and only if it consists of just a single axis.  In this case, $A_Y \cong \FF$ and it is usually denoted by $1\A$.

Note also that if we build a shape from an algebra, then the 
consistency requirement is automatically satisfied as $\phi_{g,Z,Y}$ is just the restriction of the action of $g$ from $A$ to 
$A_Z$.

Furthermore, $\phi_{g,Z,Y} \colon A_Z \to A_Y$ is unique and depends only on the action of $g$, as follows from the next 
lemma.

\begin{lemma}
If $g,g' \in G$ are such that $\psi_g|_{Z} = \psi_{g'}|_{Z}$, then $\phi_{g,Z,Y} = \phi_{g', Z, Y}$.
\end{lemma}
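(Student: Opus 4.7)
The plan is to exploit the consistency requirement (\ref{consistency req}) applied to both $g$ and $g'$ simultaneously, and then use the fact that $\theta_Z$ is a full embedding to pass from agreement on the image of $\theta_Z$ to agreement on all of $A_Z$.

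More precisely, I would first rewrite the hypothesis $\psi_g|_Z = \psi_{g'}|_Z$ in explicit form: for every $z \in Z$, we have $z^g = z^{g'}$ as elements of $Y$. Applying $\theta_Y$ on the left then yields $\theta_Y(z^g) = \theta_Y(z^{g'})$, i.e.\ $\theta_Y \circ \psi_g = \theta_Y \circ \psi_{g'}$ as maps $Z \to A_Y$. Now invoking the defining identity (\ref{consistency req}) for both $g$ and $g'$, I obtain
\[
\phi_{g,Z,Y} \circ \theta_Z \;=\; \theta_Y \circ \psi_g \;=\; \theta_Y \circ \psi_{g'} \;=\; \phi_{g',Z,Y} \circ \theta_Z.
\]
So the two algebra homomorphisms $\phi_{g,Z,Y}$ and $\phi_{g',Z,Y}$ agree on the subset $\theta_Z(Z) \subseteq A_Z$.

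To conclude, I would argue that $\theta_Z(Z)$ generates $A_Z$ as an algebra. Since $\theta_Z$ is by definition a \emph{full} embedding of $Y$ into $A_Y$ (apologies, of $Z$ into $A_Z$), its image is exactly the axet $X(A_Z)$, and an axial algebra is generated by its set of axes. Therefore $\theta_Z(Z)$ generates $A_Z$, and two algebra homomorphisms from $A_Z$ that agree on a generating set must coincide. Hence $\phi_{g,Z,Y} = \phi_{g',Z,Y}$.

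I do not anticipate any real obstacle: the argument is essentially a diagram chase combined with the universal property that an algebra map is determined by its values on generators. The only point requiring care is to confirm that ``full embedding'' in Definition~\ref{shape} does indeed mean $\theta_Z$ surjects onto the axes of $A_Z$, so that the image generates $A_Z$; this is exactly how fullness was defined just before the statement.
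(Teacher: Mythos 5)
Your proposal is correct and follows essentially the same route as the paper's proof: both apply the consistency condition (\ref{consistency req}) to $g$ and $g'$, observe that the two homomorphisms agree on $\theta_Z(Z)$, and conclude because $\theta_Z(Z)$ generates $A_Z$. No issues.
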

\begin{proof}
Since $ \phi_{g,Z,Y}$ and $\phi_{g',Z,Y}$ are both algebra homomorphisms from $A_Z$ to $A_Y$, it suffices to show that they agree on a generating set of $A_Z$.  Indeed, $\theta_Z(Z)$ generates $A_Z$ and for $z \in Z$, we have $\phi_{g,Z,Y}(\theta_Z(z)) = \theta_Y\circ(\psi_g(z)) = \theta_Y\circ(\psi_{g'}(z)) = \phi_{g',Z,Y}(\theta_Z(z))$.
\end{proof}

This leads to the following results.

\begin{corollary}\label{conjaxetshape}
Let $\Theta$ be a shape on an axet $X$.
\begin{enumerate}
\item For any $Z \subseteq \mathcal{X}_2$, $\phi_{1, Z, Z}$ is the identity map on $A_Z$.
\item For $Z, Y, T \in \mathcal{X}_2$ and $g, h \in G$ such that $Z^g \subseteq Y$ and $Y^h \subseteq T$, we have
\[
\phi_{gh, Z, T} = \phi_{h, Y, T} \circ \phi_{g, Z, Y}.
\]
\item If $Y, Z \in \mathcal{X}_2$ such that $Z^g = Y$ for $g \in G$, then $(\phi_{g, Z, Y})^{-1} =  \phi_{g^{-1}, Y, Z}$ and in particular, both maps are algebra isomorphisms.
\end{enumerate}
\end{corollary}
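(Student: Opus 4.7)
The plan is to derive all three parts from the uniqueness statement of the preceding lemma, combined with the fact that $\theta_Z(Z)$ generates $A_Z$ as an algebra. This means: any algebra homomorphism $\phi \colon A_Z \to A_Y$ satisfying $\theta_Y \circ \psi = \phi \circ \theta_Z$ for a prescribed map $\psi \colon Z \to Y$ is uniquely determined by $\psi$, because its values on the generating set $\theta_Z(Z)$ are forced. So in each case it suffices to exhibit a candidate map and verify it satisfies the defining consistency equation for the $\phi$ in question.

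For (1), the key observation is that $\psi_1$ is the identity morphism on $Z$, so $\theta_Z \circ \psi_1 = \theta_Z = \id_{A_Z} \circ \theta_Z$. Thus $\id_{A_Z}$ is a candidate for $\phi_{1,Z,Z}$, and by uniqueness the two agree. For (2), first note that $\psi_{gh} = \psi_h \circ \psi_g$ as morphisms $Z \to T$, factoring through $Y$ since $Z^g \subseteq Y$ and $Y^h \subseteq T$. Then stack the two commutative squares coming from the shape condition for $\phi_{g,Z,Y}$ and $\phi_{h,Y,T}$ to obtain
\[
\theta_T \circ \psi_{gh} = \theta_T \circ \psi_h \circ \psi_g = \phi_{h,Y,T} \circ \theta_Y \circ \psi_g = \phi_{h,Y,T} \circ \phi_{g,Z,Y} \circ \theta_Z.
\]
Since $\phi_{h,Y,T} \circ \phi_{g,Z,Y}$ is an injective algebra homomorphism $A_Z \to A_T$ satisfying the defining equation for $\phi_{gh,Z,T}$, uniqueness gives the desired equality.

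For (3), apply (2) twice. Taking $h = g^{-1}$ and $T = Z$, together with (1), yields
\[
\phi_{g^{-1},Y,Z} \circ \phi_{g,Z,Y} = \phi_{1,Z,Z} = \id_{A_Z},
\]
and interchanging the roles of $(g,Z,Y)$ and $(g^{-1},Y,Z)$ gives $\phi_{g,Z,Y} \circ \phi_{g^{-1},Y,Z} = \id_{A_Y}$. Hence $\phi_{g,Z,Y}$ and $\phi_{g^{-1},Y,Z}$ are mutually inverse, and in particular both are algebra isomorphisms.

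There is no real obstacle here: the entire argument is formal and follows the usual pattern of deriving functoriality-type identities from a universal property. The only mild care needed is in part (2), where one must spell out the compatibility $\psi_{gh} = \psi_h \circ \psi_g$ explicitly to justify stacking the two commutative squares before invoking uniqueness.
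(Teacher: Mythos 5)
Your proof is correct and follows essentially the same route as the paper: part (1) from uniqueness applied to the identity, part (2) by stacking the two commutative squares and invoking uniqueness of $\phi_{gh,Z,T}$, and part (3) by specialising (2) with $h=g^{-1}$, $T=Z$ and using (1). Your write-up is just a more explicit version of the paper's terse argument, including the worthwhile extra step of checking both compositions are the identity in part (3).
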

\begin{proof}
The first is immediate.  The second follows as $Z^{gh} \subseteq T$ and the composition $\phi = \phi_{h, Y, T} \circ \phi_{g, Z, Y}$ satisfies $\theta_T \circ \psi_{gh} = \phi \circ \theta_Z$ and so, by uniqueness, it equals $\phi_{gh,Z, T}$.  Finally, the last claim follows from the first two parts if we take $Y = Z^g$, $T = Z$ and $h = g^{-1}$.
\end{proof}

\begin{definition}
Suppose that $\Theta = \{ \theta_Y \colon Y \to A_Y : Y \in \mathcal{X}_2(X) \}$ and $\Theta' = \{ \theta'_{Y'} \colon Y' \to A'_{Y'} : Y' \in \mathcal{X}_2(X') \}$ are shapes on axets $X$ and $X'$.  A \emph{morphism} from $\Theta$ to $\Theta'$ is a morphism $\psi \colon X \to X'$ such that for all $Y \in \mathcal{X}_2(X)$, there exists an algebra homomorphism $\phi_{Y} \colon A_Y \to A'_{\psi(Y)}$ such that
\[
\phi_Y \circ \theta_Y = \theta'_{\psi(Y)} \circ \psi
\]
\end{definition}

In other words, the following diagram commutes.
\[
\begin{tikzcd}[row sep = large, column sep = large]
Y \arrow[r, "\psi"] \arrow[d, swap, "\theta_Y"] & \psi(Y) \arrow[d, "\theta'_{\psi(Y)}"] \\
A_Y \arrow[r, swap, dashed,"\phi_{Y}"]  & A'_{\psi(Y)}
\end{tikzcd}
\]

Note that the algebra homomorphisms $\phi_Y$ are part of the morphism of shapes.  However, they are uniquely defined by $\psi$ and so all we need to assume is that they exist.

This gives us a category of shapes on $S$-axets and when we say that two shapes are isomorphic, we mean that they are isomorphic in this category.  Such an isomorphism of shapes naturally is an isomorphism of the underlying axets.

Now that we have a good concept of isomorphism of shapes, we can develop a more concise way of describing a shape.  By part 3 of Corollary \ref{conjaxetshape}, we have a natural isomorphism between the algebras $A_Y$ for conjugate subaxets $Y$.  Moreover, if the consistency condition (\ref{consistency req}) holds for some pair of subaxets $Z \subset Y$, then it must also hold for $Z^g \subset Y^g$, for $g \in G$.  In light of this, we define the \emph{condensed shape} to be a set of embeddings $\theta_Y$, one for each representative $Y \in \mathcal{X}_2$ of the $G$-orbit $Y^G$ subject to the consistency condition.  It is clear that the (full) shape can be recovered from the condensed shape uniquely up to isomorphism.  From now on, when we talk of the shape, we usually mean the condensed shape.

Let us now note that these choices for different $Y$ are not always independent because of the consistency requirement.  In particular, if $Z \subset Y$ are two $2$-generated subaxets, then $\theta_Y$ defines $\theta_Z$ up to isomorphism.  A choice of $\theta_Z$ also restricts the possible choices for $\theta_Y$, however this is not as strong as the other way around.

Note that if $Z$ is a subaxet with a single axis, then $A_Z \cong \FF$ and the consistency requirement for $Z \subseteq Y$ is always satisfied.  In other words, $Z$ does not impose any extra conditions on the shape and we may just limit ourselves to the subaxets which are $2$-generated and not $1$-generated.  We let $\mathcal{X}_2^\sharp = \mathcal{X}_2^\sharp(X)$ be the set of all such subaxets of $X$.

We use containment to define an object which will help us better understand the consistency requirements.  Consider the digraph whose vertex set is $\mathcal{X}_2^\sharp$ and the directed edges are given by containment.  As for the shape, if $Y \supseteq Z$, then $Y^g \supseteq Z^g$ for all $g \in G$.  Hence we can quotient by the action of $G$ making a more compact object as follows.

\begin{definition}
Let $X = (G, X, \tau)$ be an $S$-axet.  The \emph{shape graph} $\Gamma_X$ of $X$ has vertices given by the $G$-orbits on $\mathcal{X}_2^\sharp$ and there is a directed edge from one orbit to another, if there exists representatives $Y$ and $Z$, respectively, such that $Y \supseteq Z$.
\end{definition}

Note that in \cite{axialconstruction}, we gave a similar definition of shape graph, however there the vertices were pairs of axes\footnote{Also, that definition was in terms of domination which meant containment between subaxets generated by the two pairs.}.  The definition we give here is better as there could be multiple pairs (up to the action of $G$) which generate the same subaxet.

Recall that the \emph{weakly connected components} of a directed graph $\Gamma$ are the connected components of the undirected graph with the same vertex set as $\Gamma$, but an undirected edge for every directed edge.  

Now, to select a valid shape for an axet $X$, we may consider its shape graph $\Gamma_X$ and give an embedding $\theta_Y$ for each vertex $Y$ of $\Gamma_X$.  The directed edges in the graph show us when we must consider the consistency condition.  In particular, for a weakly connected component of the shape graph, the choices of embeddings for the vertices will depend on one another.  Indeed, there may only be one such possible choice.  However, choices of embeddings in different weakly connected components are independent.

Finally, we wish to know if there is an axial algebra which contains the given axet and shape.

\begin{definition}\label{def:completion}
Let $X$ be an axet and $\Theta$ a shape on $X$.  A \emph{completion} of $\Theta$ is a surjective morphism $\psi \colon \Theta \to \Theta(A)$ of shapes, for some axial algebra $A$ (we may also say that $A$ is the completion of $\Theta$).  We say a completion is \emph{faithful} if $\psi$ is injective.  If there does not exist such a completion, we say that $\Theta$ \emph{collapses}.
\end{definition}

We are most interested in completions which are faithful.

It is obvious that a shape $\Theta$ for $X$ defines by restriction a shape $\Theta_Y$ on any subaxet $Y$ of $X$ and similarly, a completion for $\Theta$ induces a completion for $\Theta_Y$.  Because of this, if $\Theta$ contains a subshape $\Theta_Y$  which is not faithful, then $\Theta$ also cannot be faithful.  If we have the stronger property that $\Theta_Y$ collapses, then $\Theta$ also collapses.  So, if we can find collapsing (or non-faithful) shapes on small axets $Y$, then we can use these to show that a shape $\Theta$ on a bigger axet $X$ containing $Y$ also collapses (or is non-faithful).  In this spirit, in Section \ref{sec:forbidden}, we will give several results about collapsing small shapes.

%%%%%%%%%%%%%%%%%%%%%%%%%%%%%%%%%%%%%%%%%%%%%%%%%%%%%%%%%

\section{$2$-generated algebras of Jordan type}\label{sec:2gen}

In the next section, we will tackle completions of shapes on a one-point extension of an axet $X$.  For this, we will need some details about the $2$-generated algebras of Jordan type which we discuss here.

Throughout, $1\A$ will denote the $1$-dimensional axial algebra spanned by a single axis.  Also $\mathbb{F}$ will be a field of characteristic different from $2$.

\begin{definition}
A $2\B$ axial algebra is spanned by two axes $a_0$, $a_1$ such that $a_0 a_1 = 0$.
\end{definition}

It is isomorphic to $\mathbb{F} \times \mathbb{F}$ and is a $2$-generated axial algebra for many different fusion laws, including $\cJ(\eta)$ and $\cM(\alpha, \beta)$.

%%%%%%%%%%%

\subsection{Axial algebras of Jordan type $\eta$}\label{sec:Jordan}

Here we briefly review the $2$-generated algebras of Jordan type $\eta$ (for a more full exposition see \cite{Axial2}).  We use $\cJ^2(\eta)$ to denote the set of all $2$-generated axial algebras of Jordan type $\eta$.

The next example belongs to the class of Matsuo algebras corresponding to $3$-transposition groups.  It is the smallest non-trivial example.

\begin{definition}
Let $\eta \in \mathbb{F} - \{1,0\}$.  Let $3\C(\eta)$ be the algebra with basis $x,y,z$ and, for $a,b \in \{x,y,z\}$, the algebra product is defined by
\[
ab = \begin{cases}
a, & \mbox{if } b=a; \\
\frac{\eta}{2}(a + b - c), & \mbox{if } \{a, b, c\} = \{x, y, z\}.
\end{cases}
\]
This is an axial algebra of Jordan type $\eta$.
\end{definition}

Note that $x(x+y+z) = (1+\eta)x$ and similarly for $y$ and $z$.  So, if $\eta \neq -1$, then $\frac{1}{1+\eta}(x+y+z)$ is the identity for $3\C(\eta)$.  However, if $\eta = -1$, then $I:=\la x + y + z \ra$ is a nil ideal.  The quotient $3\C(-1)/I$ is an axial algebra of Jordan type $-1$ denoted by $3\C(-1)^\times$.  Note that $xy = -\frac{1}{2}(x+y-z) =z= -x-y$ in $3\C(-1)^\times$ and so $3\C(-1)^\times \not \cong 2\B$.

The only other case where $3\C(\eta)$ is not simple is for $\eta = 2$.  Namely, $3\C(2)$ has an ideal $J := \la x-y, y-z \ra$ which is the $2$-dimensional space of elements whose coefficients sum to $0$.  Note that in this case, for all axes, $J = A_0 \oplus A_2$.  Clearly, the quotient $3\C(2)/J$ is $1$-dimensional and hence isomorphic to $1\A$.  If $\ch(\FF) \neq 3$, then $I$ and $J$ are the only possible non-trivial proper ideals of $3\C(\eta)$ and so $3\C(-1)^\times$ is simple.  In characteristic $3$, $-1=2$ and so we have both ideals.  In fact, $I \subset J$ and hence $3\C(-1)^\times$ is not simple.

If $\eta \neq \frac{1}{2}$, then $2\B$ and $3\C(\eta)$ for all $\eta$ and $3\C(-1)^\times$ for $\eta = -1$ are the only axial algebras of Jordan type generated by two distinct axes.  For $\eta = \frac{1}{2}$, the situation is more complicated and there are infinitely many additional algebras.

\begin{definition}\label{def:spinfactor}
Let $V$ be a $2$-dimensional vector space over $\mathbb{F}$ and $b$ be a symmetric bilinear form on $V$.  We define a product on $S := \la \1 \ra \oplus V$ by
\[
(c\1 + u)(d\1+v) = (cd + \tfrac{1}{2}b(u,v))\1 + cu+dv
\]
where $c,d \in \mathbb{F}$ and $u,v \in V$.  The algebra $S = S(b)$ is a Jordan algebra called the \emph{spin factor} (when $\dim(V) = 2$, this is $\mathrm{Cl}^J(\mathbb{F}^2, b)$ in \cite{Axial2}).
\end{definition}

Note that two spin factor algebras $S(b)$ and $S(b')$ are isomorphic if and only if their forms $b$ and $b'$ are equivalent.  It is easy to see that the non-trivial (non-zero and non-identity) idempotents in $S$ are $x := \tfrac{1}{2}(\1 + u)$, where $u \in V$ with $b(u,u) = 2.$\footnote{We note that $V$ might not contain any vectors $u$ such that $b(u,u)=2$, in which case $S$ is not an axial algebra.}  The adjoint of $x$ has eigenspaces
\begin{align*}
A_1(x) &= \la x \ra, \\
A_0(x) &= \la  \tfrac{1}{2}(\1 - u) \ra, \\
A_\frac{1}{2}(x) &= u^\perp \subset V.
\end{align*}
Moreover $x$ is a primitive axis of Jordan type $\frac{1}{2}$ and so the Miyamoto involution $\tau_x$ acts trivially on $\la \1, u\ra$ and inverts $u^\perp$.  Note that $x^- :=  \tfrac{1}{2}(\1 - u)$ is also an axis.  We call $x^-$ the \emph{opposite axis} of $x$ and it is clear from the eigenspaces of $x$ that it is the only axis of $S$ which satisfies $x x^- = 0$.  In this case, $\lla x, x^- \rra \cong 2\B$ and it is a proper subalgebra of $S$.

From now on, suppose that $x = \tfrac{1}{2}(\1 + u)$, $y = \tfrac{1}{2}(\1 + v)$ are two distinct non-opposite axes in $S$.  Then $u, v$ is a basis for $V$ satisfying $b(u,u) = 2 = b(v,v)$.  The form $b$ is fully identified by the value $\delta := b(u,v)$ and so we will write $S(\delta)$ for $S(b)$.

We wish to investigate when $x$ and $y$ generate $S$.

\begin{proposition}\label{Jordanproper}
We have $\lla x, y \rra \neq S$ if and only if $\delta =2$.  In this case, $xy = \frac{1}{2}(x+y)$ and $\la x, y \ra \lhd S$.
\end{proposition}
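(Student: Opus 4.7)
The plan is to verify everything by direct computation in the spin-factor basis $\1,u,v$. Using $b(u,u)=b(v,v)=2$ and $b(u,v)=\delta$, expand
\[
xy=\tfrac{1}{4}(\1+u)(\1+v)=\tfrac{1}{4}\bigl((1+\tfrac{\delta}{2})\1+u+v\bigr),
\]
while $\tfrac{1}{2}(x+y)=\tfrac{1}{2}\1+\tfrac{1}{4}(u+v)$. Comparing the coefficient of $\1$, we see that $xy=\tfrac{1}{2}(x+y)$ is equivalent to $\delta=2$. More generally, I would check that $x,y$ are linearly independent (they are distinct non-opposite axes, so the pairs $(1,u)$ and $(1,v)$ are independent in $\la\1\ra\oplus V$), and solve $xy=ax+by$ in the basis $\1,u,v$: matching the $u$- and $v$-coefficients forces $a=b=\tfrac12$, and then matching the $\1$-coefficient forces $\delta=2$. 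Thus $xy\in\la x,y\ra$ iff $\delta=2$, and in that case $xy=\tfrac12(x+y)$.

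Next I would split into the two cases. If $\delta\ne 2$, then $x,y,xy$ are linearly independent in the $3$-dimensional algebra $S$, so $\lla x,y\rra=S$. If $\delta=2$, then $\la x,y\ra$ is already closed under multiplication by the formula $xy=\tfrac12(x+y)$, so $\lla x,y\rra=\la x,y\ra$ is $2$-dimensional and therefore strictly smaller than $S$.

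To finish, I still need to show that $\la x,y\ra$ is an ideal of $S$ when $\delta=2$. Since $S$ is spanned by $\1,u,v$, it suffices to check that each of $\1\cdot x,\ \1\cdot y,\ u\cdot x,\ u\cdot y,\ v\cdot x,\ v\cdot y$ lies in $\la x,y\ra$. The first two are immediate. Using $b(u,u)=2=b(v,v)$ and $\delta=2$, a quick computation gives
\[
u\cdot x=\tfrac12(u+\1)=x,\qquad v\cdot y=\tfrac12(v+\1)=y,
\]
\[
u\cdot y=\tfrac12\bigl(u+\tfrac{\delta}{2}\1\bigr)=\tfrac12(u+\1)=x,\qquad v\cdot x=\tfrac12\bigl(v+\tfrac{\delta}{2}\1\bigr)=\tfrac12(v+\1)=y,
\]
so $\la x,y\ra$ is stable under left multiplication by a spanning set of $S$, hence is an ideal.

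There is no real obstacle here: the whole statement is a coordinate calculation in the three-dimensional algebra $S(\delta)$. The only mildly subtle point is the final ideal verification, where the identities $u\cdot y=x$ and $v\cdot x=y$ rely crucially on $\delta=2$ and are what prevent $\la x,y\ra$ from failing to be $S$-invariant in the generic case.
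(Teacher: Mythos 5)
Your proof is correct and follows essentially the same route as the paper: a direct coordinate computation of $xy$ in the basis $\1,u,v$ showing $xy\in\la x,y\ra$ if and only if $\delta=2$ (using that $\1\notin\la x,y\ra$ because $u,v$ are independent), followed by a verification that $\la x,y\ra$ is closed under multiplication by all of $S$. The only cosmetic difference is that the paper checks the ideal property via the decomposition $S=\la\1\ra\oplus\la x,y\ra$ together with the fact that $\1$ acts as the identity, whereas you check multiplication against the spanning set $\1,u,v$ explicitly; both amount to the same calculation.
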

\begin{proof}
%If $u$ and $v$ are linearly dependent, then $v = -u$ as $b(u,u) = 2 = b(v,v)$ and $u \neq v$.  So, $x$ and $y$ are opposite and $\lla x, y \rra \cong 2\B$.

Since $x$ and $y$ are not opposite, $u$ and $v$ are linearly independent.  In particular, $\1 \not \in \la x, y \ra$.  Since the product $xy = \tfrac{1}{4} \big( (1 + \tfrac{1}{2}b(u,v))\1 + u+v \big)
= \tfrac{1}{2}(x+y) + \tfrac{1}{8}(b(u,v) - 2)\1$, we see that $xy \in \la x,y \ra$ if and only if $ \delta = b(u,v)=2$ and so $xy = \frac{1}{2}(x+y)$.  Since $S= \la \1\ra \oplus \la x,y\ra$, $S \la x, y \ra = \la x, y \ra$ and hence $\la x, y \ra \lhd S$.
\end{proof}

We denote the proper subalgebra $\lla x,y\rra$ arising in Proposition \ref{Jordanproper} when $\dl = 2$ by $S(2)^\circ$ (this is $\mathrm{Cl}^0(\mathbb{F}^2, b)$ in \cite{Axial2}).

Notice that as long as we have two distinct non-opposite axes, $x$ and $y$, then $S$ is generated by $x$ and $y$, by $x$ and $y^-$, or by both.  We now restrict ourselves to where $S(\dl) = \lla x, y \rra$ is a $2$-generated axial algebra of Jordan type $\frac{1}{2}$, which by the above is precisely when $\dl \neq 2$.  

Every axial algebra of Jordan type admits a Frobenius form \cite{HSS} and the form for $S$ is a natural extension of $b$.  We will also denote it by $b$ and then $b(\1, \1) = 2$ and $b(\1, v) = 0$ for all $v \in V$.  Note that $b(z,z) = 1$ for each axis $z$.

Note that the radical is non-trivial if and only if $\dl = -2$ (recall that we excluded $\delta = 2$).  In this case, the radical of the form $b$ is a $1$-dimensional ideal $R :=  \la u+v \ra = \la x -y^-\ra = \la x^- - y \ra$ and $R^2=0$.  Note that $b(u,-v) = 2$ and so in $S(-2)$, $\lla x, y^- \rra \cong S(2)^\circ \cong \lla x^-,y \rra$.  We note that these are ideals as $S = \la \1 \ra \oplus \lla x,y^- \rra = \la \1 \ra \oplus \lla x^-,y \rra$.  Furthermore, their intersection is $R$.

We will see now that this is the only case where $S$ is not simple.

\begin{proposition}\label{spinsimple}\textup{\cite[Theorem 4.7]{Axial2}}
Suppose $S= \lla x,y \rra$ is a spin factor algebra as above.  Then $\delta \neq 2$ and
\begin{enumerate}
\item $S$ is simple if and only if $\delta \neq - 2$.
\item If $\delta = - 2$, then $S$ has a single $1$-dimensional ideal $R = \la x-y^-\ra$, equal to the radical of $b$, and two $2$-dimensional ideals corresponding to the two $1$-dimensional ideals in $S/R \cong 2\B$.  These two ideals are isomorphic to $S(2)^\circ$ and they are $\lla x, y^- \rra$ and $\lla x^-, y \rra$.
\end{enumerate}
\end{proposition}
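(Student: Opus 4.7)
The fact $\dl \neq 2$ is forced by Proposition \ref{Jordanproper}, since $\dl = 2$ would make $\lla x, y \rra$ a proper subalgebra and contradict $S = \lla x, y\rra$. For the rest, the key observation is that $b(a, a) = 1 \neq 0$ for every axis $a$ of $S$, so Theorem \ref{radical} identifies the algebra radical $R(S, X)$ with the radical $S^\perp$ of the Frobenius form. Ideals with no axes are therefore controlled by $S^\perp$, and ideals containing at least one axis will be handled via the projection graph (Definition \ref{projundirected}, Lemma \ref{idealprojgraph}) together with a direct computation of what such an ideal must absorb.

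For part (1), I would compute the Gram matrix of $b$ restricted to $V$ in the basis $u, v$: its determinant is $4 - \dl^2$. Since $b(\1, \1) = 2$ and $\1 \perp V$, the form $b$ is non-degenerate on $S$ iff $\dl \neq \pm 2$, \ie (given $\dl \neq 2$) iff $\dl \neq -2$; so $R(S, X) = 0$ in this case. A direct computation yields $b(x, y) = (2+\dl)/4 \neq 0$, so $x$ and $y$ are adjacent in the projection graph, and Lemma \ref{idealprojgraph} forces any ideal containing $x$ to contain $y$ and hence $\lla x, y\rra = S$. Combined with $R(S, X) = 0$, this proves that $S$ is simple.

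For part (2), when $\dl = -2$ the radical of the Gram matrix on $V$ is spanned by $u + v$, and the identity $x - y^- = \tfrac12(\1+u) - \tfrac12(\1-v) = \tfrac12(u+v)$ gives $R(S, X) = S^\perp = \la x - y^- \ra = R$, a one-dimensional ideal; the only ideals it contains are $0$ and $R$. The crucial computation is $xy = \tfrac14(u+v) \in R \setminus \{0\}$, which simultaneously shows $\bar x \bar y = 0$ in $S/R$ (so $S/R \cong 2\B$) and, since $xy$ spans $R$, that any ideal containing $x$ also contains $R$; analogous computations handle the other axes. Thus ideals of $S$ containing an axis correspond bijectively under the quotient $S \to S/R$ to ideals of $2\B$ containing the appropriate image, yielding exactly two proper non-trivial $2$-dimensional ideals on top of $S$ itself. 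Finally, since $b(u, -v) = -\dl = 2$, Proposition \ref{Jordanproper} applied to the pair $(x, y^-)$ identifies $\lla x, y^-\rra$ with the $2$-dimensional algebra $S(2)^\circ$; a symmetric argument gives $\lla x^-, y\rra \cong S(2)^\circ$, and direct inspection shows these coincide with the two $2$-dimensional ideals.

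I expect the main obstacle to be the step asserting that in the degenerate case every ideal of $S$ containing an axis must contain $R$. This is not formal from the general theory of Section \ref{sec:background}; it depends on the explicit calculation $xy \in R \setminus \{0\}$ (and its variants for other pairs) to see that the ideal generated by any single axis already meets the one-dimensional $R$ non-trivially, and hence absorbs it.
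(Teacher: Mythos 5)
The paper does not prove this proposition: it is quoted verbatim from \cite[Theorem 4.7]{Axial2}, and the surrounding text merely records the conclusions (the identification $R=\la u+v\ra=\la x-y^-\ra$, the fact that $\lla x,y^-\rra\cong S(2)^\circ$ is an ideal, etc.). So there is nothing in this paper to compare against line by line; what you have written is a self-contained reconstruction using the Section \ref{sec:background} machinery (Theorem \ref{radical}, Lemma \ref{idealprojgraph}), and it is essentially correct. Your computations all check out: $b(x,y)=\tfrac{2+\delta}{4}$, the Gram determinant $4-\delta^2$ on $V$ together with $b(\1,\1)=2$ and $\1\perp V$, the radical $\la u+v\ra=\la x-y^-\ra$ when $\delta=-2$, and the key product $xy=\tfrac14(u+v)$ which spans $R$ and forces $R$ into any ideal containing $x$; the identification of the two $2$-dimensional ideals via Proposition \ref{Jordanproper} applied to $(x,y^-)$ is exactly how the paper itself describes them in the preceding paragraph.

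One small gap to patch in part (1): Theorem \ref{radical} tells you that a nonzero ideal not contained in $S^\perp=0$ must contain \emph{some} axis of the closed set $X$, not necessarily $x$ or $y$, and your projection-graph step only treats ideals containing $x$ (or, by symmetry, $y$). Two quick repairs are available. Either invoke Lemma \ref{Miyideal}: ideals are $\Miy(X)$-invariant and $X=x^{\Miy(X)}\cup y^{\Miy(X)}$, so an ideal containing any axis of $X$ already contains $x$ or $y$, and then your edge $b(x,y)\neq 0$ finishes it. Or note that for axes $z=\tfrac12(\1+p)$, $w=\tfrac12(\1+q)$ one has $b(z,w)=\tfrac14\bigl(2+b(p,q)\bigr)$, and when $\delta\neq\pm2$ the nondegeneracy of $b$ on $V$ forces $b(p,q)=-2$ to imply $q=-p$; hence the only axis orthogonal to $z$ is $z^-$, the projection graph on $X$ (which has at least three vertices when $\delta\neq\pm 2$) is connected, and the remark after Lemma \ref{projundirected} applies directly. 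With either patch the argument is complete.
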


Clearly, if $J \unlhd S(-2)$ is $2$-dimensional, then $S(-2)/J \cong 1\A$.  Also, note that every axis $a$ lies in one of the two $2$-dimensional ideals in $S(-2)$, say $a \in J$.  Then, $a^-$ cannot be in $J$ as $\lla a, a^- \rra \cong 2\B$, so $a^-$ lies in the other $2$-dimensional ideal, which we denote $J^-$.

We now want to examine the situation in Lemma \ref{Jordanproper} in a bit more detail for use later.

\begin{lemma}\label{spintau}
Suppose $S= \lla x,y \rra$ is a spin factor algebra as above and set $y' = y^{\tau_x}$.  Then $\lla y, y' \rra$ is $2$-dimensional if and only if
\begin{enumerate}
\item $\dl= 0$, $y' = y^-$ and $\lla y, y' \rra \cong 2\B$, or
\item $\dl = -2$ and $\lla y, y' \rra \cong S(2)^\circ$.
\end{enumerate}
%Furthermore, if we have another axis $z \neq x$ such that $\lla z, y \rra = S$ and $\tau_{z}$ switches $y$ and $y'$, then $b(u, v) = 0$ and $z = x^-$.
\end{lemma}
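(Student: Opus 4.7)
The plan is to produce $y' = y^{\tau_x}$ explicitly in the spin-factor basis and then read off when $\lla y, y' \rra$ is $2$-dimensional by applying Proposition~\ref{Jordanproper} to the pair $(y, y')$. Decompose $v$ along $u$ and $u^\perp$: since $b(u, v) = \dl$, write $v = \tfrac{\dl}{2} u + w$ where $w := v - \tfrac{\dl}{2} u \in u^\perp$. As $\tau_x$ fixes $\la \1, u\ra$ pointwise and inverts $u^\perp$, this gives $\tau_x(v) = \dl u - v$, so $y' = \tfrac{1}{2}(\1 + v')$ with $v' := \dl u - v$. A quick bilinearity computation confirms $b(v', v') = 2$, consistent with $y'$ being an axis.

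Next I would split into two cases by whether $v$ and $v'$ are linearly dependent in $V$. Observe first that $u$ and $v$ are linearly independent: the standing hypothesis $S = \lla x, y\rra$ forces $y \neq x, x^-$, and since $b(v,v) = b(u,u) = 2$ any relation $v = cu$ would give $c = \pm 1$, i.e., $y \in \{x, x^-\}$. Thus $v' = v$ would force $\dl u = 2v$, contradicting independence; the only dependence possibility is $v' = -v$, equivalent to $\dl u = 0$ and so to $\dl = 0$. In this subcase $y' = y^-$ and $\lla y, y' \rra = \lla y, y^- \rra \cong 2\B$, giving alternative (1). If instead $v$ and $v'$ are linearly independent, then $y$ and $y'$ are distinct non-opposite axes, so Proposition~\ref{Jordanproper} applied to the pair $(y, y')$ says that $\lla y, y' \rra$ is a proper subalgebra---equivalently, $2$-dimensional and isomorphic to $S(2)^\circ$---if and only if $b(v, v') = 2$. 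A direct computation gives $b(v, v') = b(v, \dl u - v) = \dl^2 - 2$, so the condition becomes $\dl^2 = 4$; since $\dl = 2$ is ruled out by the standing hypothesis $S = \lla x, y\rra$, this forces $\dl = -2$, yielding alternative (2). In the remaining case $\dl \notin \{0, -2\}$, we have $\lla y, y' \rra = S$, which is $3$-dimensional.

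The real content is the single calculation of $\tau_x(v)$; after that, everything reduces to a bilinear form evaluation and an appeal to Proposition~\ref{Jordanproper}. The only subtlety is in cleanly eliminating the degenerate possibilities $y = y'$ and $y \in \{x, x^-\}$ before invoking the Proposition, which is why the linear independence of $u$ and $v$ needs to be flagged at the outset.
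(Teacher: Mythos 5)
Your proof is correct, and at the decisive step it takes a more computational route than the paper's. Both arguments rest on the same two ingredients --- the action of $\tau_x$ on $V$ (fixing $u$, negating $u^\perp$) and Proposition \ref{Jordanproper} --- and your handling of the $2\B$ case ($v'=-v$ if and only if $\dl=0$) is the same as the paper's. The difference is in the $S(2)^\circ$ case: the paper notes that $\lla y,y'\rra\cong S(2)^\circ$ is then an ideal of $S$ and invokes Proposition \ref{spinsimple} (non-simplicity occurs only at $\dl=-2$), whereas you write $y'$ explicitly as $\tfrac{1}{2}(\1+\dl u-v)$ and evaluate $b(v,v')=\dl^2-2$, so that Proposition \ref{Jordanproper} applied to the pair $(y,y')$ gives $\dl^2=4$, with $\dl=2$ excluded by the standing hypothesis. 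Your version buys you both directions of the equivalence in a single computation (the paper's proof explicitly argues only the forward implication in the $S(2)^\circ$ case, leaving the converse to the earlier discussion of $S(-2)$), and it avoids the appeal to the simplicity classification; the cost is one extra Gram-value calculation and the preliminary bookkeeping about linear dependence of $v$ and $v'$, which the paper sidesteps by phrasing the dichotomy in terms of opposite axes. Both proofs are valid.
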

\begin{proof}
The algebra $\lla y, y' \rra$ is $2$-dimensional if and only if either $y'$ is opposite to $y$, in which case $\lla y, y' \rra \cong 2\B$, or, by Lemma \ref{Jordanproper}, $\lla y,y' \rra \cong S(2)^\circ$.

Suppose first that $y' = y^-= \frac{1}{2}(\1 -v)$.  Since $A_\frac{1}{2}(x) = u^\perp$, for $\tau_x$ to switch $y$ and $y'$, we must have $v-(-v) = 2v \in u^\perp$.  That is, $b(u,v)=0$.  %If $\tau_z$ also switches $y$ and $y'$, for an axis $z = \frac{1}{2}(\1+w)$, then $b(w, v)=0$ also.  Since $b(v,v)=2$, $v^\perp$ is $1$-dimensional and so $w = -v$ and $z = x^-$.
 
Finally, suppose that $J := \lla y, y' \rra \cong S(2)^\circ$.  As noted, $J \unlhd S$ and so, by Proposition \ref{spinsimple}, $b(u,v) = -2$.  %However, by Proposition \ref{Jordanproper}, if $b(u,v) = 2$, then $\lla x, y \rra$ is a proper subalgebra of $S$, hence $b(u,v) = -2$.%  Note that $\tau_x$ fixes $x$ and $x^-$ and inverts $u^\perp$.  In particular, the fixed subalgebra of $\tau_x$ is $\lla x, x^-\rra \cong 2\B$.  However, $\tau_x$ also fixes $yy'$ which equals $\frac{1}{2}(y+y')$ by Proposition \ref{Jordanproper}.  Note that since $\lla y, y' \rra \cong S(2)^\circ$, $b(v, v') = 2$, where $y' = \frac{1}{2}(\1+v')$.  So $b(\frac{1}{2}(v+v'), \frac{1}{2}(v+v')) = \frac{1}{4}(2+2+2+2) = 2$ and $\frac{1}{2}(y+y') = \frac{1}{2}(\1 + \frac{1}{2}(v+v'))$ is an axis.  Since it is in the fixed subalgebra of $\tau_x$, it is equal to either $x$ or $x^-$.  However, $x \in J^-$, so $\frac{1}{2}(y+y') = x^-$.  In particular, it is clear that $x = \frac{1}{2}(y+y')^-$ is unique and no other such axis $z$ exists.
\end{proof}

There is one further example we need to consider.

\begin{definition}
Let $\Cl = \la x,y,z \ra$ be the $3$-dimensional algebra with multiplication given by $x^2 = x$, $y^2=y$, $az = 0$ for all $a \in \Cl$, and
\[
xy = \tfrac{1}{2}(x+y) + z
\]
\end{definition}

Note that we write $\Cl$ since it is a cover of $S(2)^\circ$ by extending by a nil element $z$.  ($\Cl$ was called $\mathrm{Cl}^{00}(\mathbb{F}^2, b)$ in \cite{Axial2}).

It is an easy calculation to see that $a$ is an idempotent in $\Cl$ if and only if $a = \mu x + (1-\mu) y + 2\mu(1-\mu)z$ for some $\mu \in \mathbb{F}$.  Moreover, 
\begin{align*}
A_0(a) &= \la z \ra \\
A_\frac{1}{2}(a) &= \la x-y -2(1-2\mu)z \ra
\end{align*}
and so $a$ is a primitive axis of Jordan type $\frac{1}{2}$.  It is then an easy calculation to see that any pair of distinct axes generate $\Cl$, so in particular it is an axial algebra of Jordan type $\frac{1}{2}$.

The algebra $\Cl$ has a Frobenius form given by $(a,b) = 1$, for any two axes $a$ and $b$, and $(a,z)=0=(z,z)$.  By Lemma \ref{projundirected}, any proper ideal of $\Cl$ is contained in the radical and, by Theorem \ref{radical}, an easy calculation shows that this is $\la x-y, z\ra$.  Observe that $\la z \ra$ is a nil ideal and from the multiplication, one can show that $\Cl/\la z\ra \cong S(2)^\circ$.  A straightforward calculation shows that these are the only non-trivial proper ideals.

%%%%%

We now state the classification of $2$-generated axial algebras of Jordan type $\eta$.

\begin{theorem}\textup{{\cite[Theorem 1.1]{Axial1}}}\label{Jordanclassification}
Let $A \not \cong 1\A$ be a $2$-generated axial algebra of Jordan type $\eta$.  Then $A$ is isomorphic to one of
\begin{enumerate}
\item $2\B$,
\item $3\C(\eta)$,
\item $3\C(-1)^\times$ and $\eta = -1$,
\item $S(\delta)$, $\delta \neq 2$ and $\eta = \frac{1}{2}$,
\item $S(2)^\circ$ and $\eta = \frac{1}{2}$,
\item $\Cl$ and $\eta = \frac{1}{2}$.
\end{enumerate}
\end{theorem}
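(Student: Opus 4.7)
My plan is a Sakuma-style dihedral analysis. Let $A = \lla a, b \rra$ with distinct axes $a, b$ of Jordan type $\eta$, and let $D = \la \tau_a, \tau_b \ra$, a (possibly infinite) dihedral group. The $D$-orbit $X$ of $\{a, b\}$ is a closed $2$-generated $C_2$-axet, so by Theorem \ref{2genaxet} it is either $X(n)$ or a skew axet $X'(n)$; since swapping $a$ and $b$ lifts to an automorphism of $A$, the skew option is ruled out and we may assume $X \cong X(n)$ for some $n \in \N \cup \{\infty\}$. Writing $a_0 = a$, $a_1 = b$ and $a_{i+2} = a_i^{\tau_{a_{i+1}}}$, the algebra $A$ is linearly spanned by the $a_i$ together with at most one further element lying in $\bigcap_i A_0(a_i)$.

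The heart of the argument is to extract the defining relations. Decompose $b$ with respect to $a$: $b = \lambda a + v_0 + v_\eta$ with $v_0 \in A_0(a)$ and $v_\eta \in A_\eta(a)$, so that $ab = \lambda a + \eta v_\eta$ and $b^{\tau_a} = \lambda a + v_0 - v_\eta = 2\lambda a + 2v_0 - b$. Idempotency of $b$, combined with the fusion law $\cJ(\eta)$ applied to $v_\eta \cdot v_\eta$, $v_0 \cdot v_\eta$ and $v_0 \cdot v_0$, yields a small system of equations in $\lambda$, the norms and $\eta$; in particular it identifies the third axis $a_2$ as an explicit linear combination of $a_0, a_1$ and $v_0$. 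Iterating via $\tau_{a_{i+1}}$ produces a three-term linear recurrence for the $a_i$. For $\eta \ne \tfrac12$ this recurrence rigidifies the orbit almost immediately: either $\tau_a = \tau_b$, giving $n = 2$ and $A \cong 2\B$; or the orbit has length three with the $a_i$ linearly independent, and the closure of the fusion law fixes the structure constants uniquely, giving $A \cong 3\C(\eta)$. The degenerate value $\eta = -1$ admits in addition a nil ideal $\la a_0 + a_1 + a_2 \ra$ whose quotient is $3\C(-1)^\times$.

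The main obstacle is the case $\eta = \tfrac12$, where $\cJ(\tfrac12)$ is too coarse to collapse the orbit and genuinely infinite $2$-generated algebras appear. Here I would invoke the Frobenius form (available by \cite{HSS}), compute the Gram matrix of the $a_i$ via the recurrence, and match it against the Gram matrix inside the spin factor $S(\delta)$ of Definition \ref{def:spinfactor}, with $\delta$ a specific affine function of $\lambda = (a,b)$. Generically this identifies $A \cong S(\delta)$ with $\delta \ne 2$; the exceptional value $\delta = 2$ realises the proper subalgebra $S(2)^\circ$ of Proposition \ref{Jordanproper}. The subtlest point is the possible appearance of the extra element in $\bigcap_i A_0(a_i)$: when it is nonzero and behaves as a nil element we obtain the $3$-dimensional cover $\Cl$, while when it is killed by the defining relations we remain inside $S(2)^\circ$. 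Treating these three possibilities uniformly, via a universal $2$-generated Jordan $\tfrac12$ algebra whose proper quotients recover each of $S(\delta)$, $S(2)^\circ$ and $\Cl$, is the most technical step of the classification.
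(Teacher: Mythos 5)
The first thing to note is that the paper does not prove this statement at all: it is quoted verbatim from \cite[Theorem 1.1]{Axial1} (with the $\eta=\tfrac12$ cases elaborated in \cite{Axial2}), so there is no in-paper argument to measure you against. Your sketch is broadly aligned with the strategy of the published proof --- decompose $b=\lambda a+v_0+v_\eta$ with respect to $a$, exploit idempotency and the fusion law, split on $\eta=\tfrac12$ versus $\eta\ne\tfrac12$, and handle the $\tfrac12$ case via a universal object whose quotients are $S(\delta)$, $S(2)^\circ$ and $\Cl$. But as written it is a plan rather than a proof: the ``small system of equations'' and the three-term recurrence, which are where essentially all of the difficulty lives, are never derived, and the spanning claim you open with (that $A$ is spanned by the $a_i$ plus at most one element of $\bigcap_i A_0(a_i)$) is precisely the hard theorem $A=\la a,b,ab\ra$ (dimension at most $3$) in disguise --- it cannot be taken as a starting point.

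Two further points would need repair even as a sketch. First, you rule out the skew axet by asserting that swapping $a$ and $b$ lifts to an automorphism of $A$; symmetry of $2$-generated Jordan-type algebras is a \emph{consequence} of the classification (one reads it off the explicit multiplication table), not a hypothesis available at the outset, so this step is circular --- fortunately it is also unnecessary, since the direct computational route never uses the axet classification. Second, invoking the Frobenius form via \cite{HSS} in the $\eta=\tfrac12$ case is again potentially circular, as the existence of that form for Jordan-type algebras is established using the $2$-generated classification; within this proof the form has to be built by hand from the structure constants you have just computed. Finally, the dichotomy ``either $\tau_a=\tau_b$, giving $n=2$ and $A\cong 2\B$, or the orbit has length three'' is not the right one: in $2\B$ both Miyamoto involutions are trivial and the correct dividing lines are $v_\eta=0$ versus $v_\eta\ne 0$ (equivalently $ab=0$ for the $2\B$ case), with the length-$3$ orbit and the quotient $3\C(-1)^\times$ emerging from the subsequent analysis.
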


\begin{remark}\label{Jordaniso}
The only possible isomorphism between two algebras in different cases above is between $3\C(\frac{1}{2})$ and $S(\delta)$ for some specific values of $\delta$ and choice of axes.  Indeed, since both algebras have an identity, the identity and two generating axes are a basis.  So writing $\lla a, b \rra \cong 3\C(\frac{1}{2})$ and $\lla x, y \rra \cong S(\delta)$, the map $\phi\colon \1 \mapsto \1, a \mapsto x, b \mapsto y$ is an isomorphism if and only if $\phi(ab) = xy$.  In both algebras $ab = \frac{1}{2}(a+b) + \gamma \1$ for some $\gamma \in \mathbb{F}$.  So, to check for an isomorphism we just need to identify $\gamma$ for each algebra and pair of generating axes.

In $\lla x, y\rra = S(\delta)$, for $x, y$ we get $\gamma = \frac{1}{8}(\delta -2)$.  The algebra $3\C(\frac{1}{2})$ has a $1$-dimensional variety of idempotents, but there are two obvious classes of $\cJ(\frac{1}{2})$-axes, namely $\{a,b,c\}$ and $\{\1-a, \1-b, \1-c\}$.   One can easily see that $a(\1-a) = 0$, and so $\lla a, \1-a \rra \cong 2\B$ and similarly for $b$ and $c$.  Apart from these, any other pair generate $3\C(\frac{1}{2})$.  A short calculation then shows that using $a,b$, or $\1-a, \1-b$, we get $\gamma = -\frac{3}{8}$ and so $3\C(\frac{1}{2}) \cong S(-1)$; and using $a, \1-b$, we get $\gamma = -\frac{1}{8}$ and so $3\C(\frac{1}{2}) \cong S(1)$.
\end{remark}

\begin{remark}\label{onlyspin}
If $A \in \cJ^2(\eta)$ and $x, y \in A$ are distinct axes such that $\lla x, y \rra \neq A$, then $A = S(\dl)$ and the situation is described in Proposition \ref{Jordanproper}.
\end{remark}

%%%%%%%%%%%%%%%%%%%%%%%%

\subsection{Axets}\label{sec:jordanaxet}

It is clear that $2\B$ has axet $X(2)$.  The algebra $3\C(\eta)$, $\eta \neq \frac{1}{2}$, has axet $X(3)$.

For $\eta=\frac{1}{2}$, where we have $\Cl$ and the spin factor algebras $S(\dl)$ (which include $3\C(\frac{1}{2})$), the situation is more complicated.  First let $A = \lla x,y\rra \cong S(\dl)$.  Recall that $x = \frac{1}{2}(\1+u)$ and $y = \frac{1}{2}(\1+v)$, where $b(u,u)=2=b(v,v)$ and $b(u,v) =\dl$.  For an axis $z = \frac{1}{2}(\1+w)$ in $S(\dl)$, the Miyamoto involution $\tau_z$ restricted to the underlying $2$-dimensional quadratic space is $-r_w$, where $r_w$ is the reflection in $w$.  So to calculate the closure of $\{ x, y\}$ under the action of the Miyamoto group, it suffices to consider the closure of $\{u, v\}$ under the action of  $\la -r_u, -r_v \ra \cong \Miy(A)$.

As we will see later, this more general setup of a $2$-dimensional reflection group arises also in the split spin factor algebras \cite{splitspin} and we briefly recount it here.  Note that in \cite{splitspin}, the form was scaled differently, with $b' = \frac{1}{2}b$, but here we will use our $b$.  Let $D$ be the dihedral subgroup $D := \la -r_u, -r_v \ra$.  We are interested in the size of $\Omega := u^D \cup v^D$.  Note that $u^D$ and $v^D$ may be equal, or disjoint.  Define $\theta$ to be the involution swapping $u$ and $v$.  Then the dihedral group $\hat D := \la \theta, -r_u \ra$ contains $D$ and $|\hat D : D | \leq 2$.  We note that $u^{\hat D} = u^D \cup v^D = \Omega$.  Since the stabiliser in $\hat D$ of $u$ is $\la -r_u \ra$, the size of $\Omega$ is equal to the order of $\rho := \theta(-r_u)$.  With respect to the basis $u, v$ we have 
\[
\rho = \begin{pmatrix} \dl & -1 \\ 1 & 0 \end{pmatrix}
\]
Since $\rho$ has determinant $1$ and trace $\dl$, it has eigenvalues $\zeta$ and $\zeta^{-1}$ which are roots of the polynomial $x^2 -\dl x +1$.  These eigenvalues lie in $\FF$, or a quadratic extension of $\FF$.  In the next lemma, $o(\zeta)$ denotes the multiplicative order of $\zeta$.  In particular, $o(\zeta)$ is finite if and only if $\zeta$ is a root of unity.

\begin{lemma}\textup{\cite[Lemma 5.3]{splitspin}}\label{quadaxet}
\begin{enumerate}
\item If $\zeta$ is not a root of unity, then $|\Omega|$ is infinite.
\item If $\zeta \neq \pm 1$, then $|\Omega| = o(\zeta)$.
\item Suppose that $\zeta = \pm 1$.  If $\ch(\FF) = 0$, then $|\Omega| = \infty$; if $\ch(\FF) = p >0$, then $|\Omega| = p$ if $\zeta = 1$ and $2p$ if $\zeta = -1$.
\end{enumerate}
\end{lemma}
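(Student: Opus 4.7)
The paragraph preceding the lemma already reduces the problem: it identifies $|\Omega|$ with $o(\rho)$, the order of $\rho$ acting on $\la u,v\ra$. So the plan is to compute $o(\rho)$ case by case using the Jordan form of $\rho$ over the algebraic closure $\overline{\FF}$, and then translate back. Before starting cases, I would briefly justify the reduction: $\hat D = \la -r_u, \theta\ra$ is dihedral and acts transitively on $\Omega = u^{\hat D}$ by construction; the stabiliser of $u$ contains $-r_u$ (which fixes $u$ in our convention) and, being the stabiliser of a point in a dihedral group, is exactly $\la -r_u\ra$. Since $\hat D = \la\rho\ra \rtimes \la -r_u\ra$, orbit-stabiliser gives $|\Omega| = o(\rho)$.

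For (1), if $\zeta$ is not a root of unity then for every $k>0$ we have $\zeta^k\neq 1$, so $\rho^k\neq I$ and $o(\rho)=\infty$. For (2), if $\zeta$ is a root of unity with $\zeta\neq\pm 1$, then $\zeta\neq\zeta^{-1}$, so $\rho$ has two distinct eigenvalues over $\overline\FF$ and is therefore diagonalisable. Hence $\rho^k=I$ iff $\zeta^k=\zeta^{-k}=1$ iff $o(\zeta)\mid k$, giving $o(\rho)=o(\zeta)$.

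For (3), $\zeta=\pm 1$ is a double root of the characteristic polynomial. Inspecting the matrix, $\rho\neq \zeta I$ (since the $(2,1)$-entry is $1$ while the $(1,2)$-entry is $-1$), so $\rho$ is a nontrivial Jordan block: $\rho=\zeta I + N$ with $N\neq 0$ and $N^2=0$. The binomial theorem yields $\rho^k=\zeta^k I + k\zeta^{k-1}N$, so $\rho^k=I$ is equivalent to $\zeta^k=1$ together with $k\cdot 1_\FF=0$. In characteristic $0$ the latter forces $k=0$, so $o(\rho)=\infty$. In characteristic $p>0$: if $\zeta=1$ the conditions reduce to $p\mid k$, whence $o(\rho)=p$; if $\zeta=-1$ (which forces $p$ odd, as otherwise $\zeta=1$) we need $k$ even \emph{and} $p\mid k$, whence $o(\rho)=2p$. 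Assembling the three cases gives the lemma.

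The only real subtlety is the $\zeta=-1$ subcase in positive characteristic, where one must keep the parity condition on $k$ separate from the divisibility condition $p\mid k$; forgetting the parity would incorrectly yield $o(\rho)=p$ instead of $2p$. Everything else is routine linear algebra once the reduction $|\Omega|=o(\rho)$ is in place.
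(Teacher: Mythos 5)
Your proof is correct. The paper does not reprove this lemma (it is quoted from \cite[Lemma 5.3]{splitspin}), but your argument --- taking the reduction $|\Omega|=o(\rho)$ from the paragraph preceding the statement and then computing $o(\rho)$ from the eigenvalue structure of $\rho$, splitting into the diagonalisable case $\zeta\neq\zeta^{-1}$ and the unipotent-times-scalar case $\rho=\zeta I+N$ with $N^2=0$, $N\neq 0$ --- is the standard one, and you correctly keep track of the parity condition that yields $2p$ rather than $p$ when $\zeta=-1$.
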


Using the above lemma and since $\dl = \zeta + \zeta^{-1}$, the set of $\dl$ for which $\rho$ has finite order $n$ is
\[
N(n) := \begin{cases}
\{  \zeta + \zeta^{-1} \in \FF : \zeta \in \bar{\FF}, o(\zeta) = n \}, & \mbox{if } n \neq p, 2p \\
\{ 2\}, & \mbox{if } n = p >0 \\
\{ -2\}, & \mbox{if } n = 2p >0 \\
\end{cases}
\]
where $\ch(\FF) = p$.\footnote{Note that in \cite{splitspin} $N(n)$ also appeared, but the values there in each set $N(n)$ are scaled by a factor of $\frac{1}{2}$.}  When $\ch(\FF)=0$, $n >1$ can be arbitrary, whereas if $\ch(\FF) = p >0$, then $n>1$ can be $p$, $2p$, or coprime to $p$.  Note that $N(n)$ and $N(m)$ are disjoint for $n \neq m$.  We define $N(\infty)$ to be $\FF - \bigcup_{n>1} N(n)$.

We can now describe the axet for $S(\dl) = \lla x, y\rra$, where as before $x = \frac{1}{2}(\1+u)$, $y = \frac{1}{2}(\1+v)$.  We have the following immediately from Lemma \ref{quadaxet} and the definition of $N(n)$.

\begin{lemma}\label{spinfactoraxet}
$S(\dl)$, $\dl \neq 2$, has axet $X(n)$, $n \in \N \cup \{ \infty \}$, if and only if $\dl \in N(n)$.  $S(2)^\circ$ has axet $X(\infty)$ in characteristic $0$ and $X(p)$ in characteristic $p>0$.
\end{lemma}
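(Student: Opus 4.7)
The plan is to identify the axet of $A = \lla x, y \rra$ with the orbit structure of $\{u, v\}$ under the dihedral subgroup $D := \la -r_u, -r_v \ra$ of $O(V, b)$, and then invoke Lemma \ref{quadaxet}.  As recalled just before the statement, the map $w \mapsto z_w := \tfrac{1}{2}(\1 + w)$ is a bijection between $\{w \in V : b(w,w) = 2\}$ and the non-trivial axes of Jordan type $\tfrac{1}{2}$ in $S(\dl)$, and under it the Miyamoto involution $\tau_{z_w}$ corresponds to $-r_w$ on $V$ (and fixes $\1$).  Consequently, the axet of $A$, \ie the closure of $\{x, y\}$ under $\Miy(A) = \la \tau_x, \tau_y \ra$, corresponds as a set with $G$-action and $\tau$-map to $\Omega := u^D \cup v^D$ equipped with the $D$-action and the assignment $z_w \mapsto -r_w$.

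Next I would argue that this combinatorial structure is $X(|\Omega|)$.  Since $D$ is a dihedral group generated by the two involutions $-r_u$ and $-r_v$, and $\tau_{z_w}$ is sent to the generator of the axis-stabiliser $D_w$, comparison with Lemma \ref{Xorbit} gives the match: for $n = |\Omega|$ odd, $D$ has order $2n$ and acts transitively; for $n$ finite and even, $u^D$ and $v^D$ are distinct orbits of size $n/2$; and the case $n = \infty$ is analogous.  Then Lemma \ref{quadaxet} applied to $\rho = \theta \cdot (-r_u)$, whose eigenvalues $\zeta, \zeta^{-1}$ satisfy $\dl = \zeta + \zeta^{-1}$, translates $|\Omega|$ directly into a condition on $\zeta$, and hence on $\dl$.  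For $\dl \neq 2$ one has $\zeta \neq 1$, so cases (1)--(2) of Lemma \ref{quadaxet} together with the $\zeta = -1$ subcase of (3) yield $|\Omega| = n$ if and only if $\dl \in N(n)$, by the very definition of $N(n)$; this covers all $n \in \N \cup \{\infty\}$.

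Finally, for $S(2)^\circ$: by Proposition \ref{Jordanproper}, $S(2)^\circ$ is an ideal of $S(2)$, so the Miyamoto involutions $\tau_x$ and $\tau_y$ acting on $S(2)^\circ$ are the restrictions of those on $S(2)$, and the axet of $S(2)^\circ$ is again the $D$-orbit of $\{u, v\}$.  With $\dl = 2$ we have $\zeta = 1$, and Lemma \ref{quadaxet}(3) then yields $|\Omega| = \infty$ in characteristic $0$ and $|\Omega| = p$ in characteristic $p > 0$, as claimed.  The main technical obstacle is the second step, \ie checking that the triple $(\Omega, D, w \mapsto -r_w)$ really produces the vertex axet $X(|\Omega|)$ and not some other $C_2$-axet such as a skew axet $X'(n)$ from Theorem \ref{2genaxet}; this amounts to reading off the orbit lengths of $D$ on $\Omega$ and matching them with the list in Lemma \ref{Xorbit}.
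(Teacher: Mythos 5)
Your argument is correct and is essentially the paper's own: the paper sets up the correspondence between the closure of $\{x,y\}$ under $\Miy(A)$ and $\Omega = u^D \cup v^D$ in the paragraphs preceding the lemma, and then declares the statement immediate from Lemma \ref{quadaxet} and the definition of $N(n)$. The extra care you take in matching $(\Omega, D, w \mapsto -r_w)$ with the vertex axet $X(|\Omega|)$ (ruling out a skew axet via the orbit lengths, or equivalently via the symmetry of $S(\dl)$) and in handling $S(2)^\circ$ through its realisation as a $\Miy$-invariant ideal of $S(2)$ only makes explicit what the paper leaves implicit.
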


Finally, for $\Cl$ we have the following.

\begin{lemma}\label{Cl00axet}
$\Cl$ has axet $X(\infty)$ if $\ch(\FF)=0$ and axet $X(p)$ if $\ch(\FF)=p >0$.
\end{lemma}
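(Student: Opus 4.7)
The plan is to reduce to Lemma \ref{spinfactoraxet} via the surjective axial algebra homomorphism $\pi \colon \Cl \to \Cl/\la z \ra \cong S(2)^\circ$ identified in the preceding discussion. As noted in the text, the idempotents of $\Cl$ are exactly $a_\mu = \mu x + (1-\mu)y + 2\mu(1-\mu)z$ for $\mu \in \FF$, and a similar elementary computation shows that the idempotents of $S(2)^\circ$ are $\bar a_\mu := \mu \bar x + (1-\mu)\bar y$, likewise indexed by $\mu \in \FF$. Since $\pi(a_\mu) = \bar a_\mu$ and both algebras are of Jordan type $\frac{1}{2}$ (so every non-zero idempotent is an axis), $\pi$ restricts to a bijection between the two axis sets.

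The crucial observation is that any axial algebra homomorphism preserves eigenspace decompositions: if $av = \lm v$, then $\pi(a)\pi(v) = \lm \pi(v)$. Since a Miyamoto involution is defined directly in terms of these decompositions, one obtains the intertwining relation $\pi \circ \tau_{a_\mu} = \tau_{\bar a_\mu} \circ \pi$. Hence the bijection $a_\mu \leftrightarrow \bar a_\mu$ is equivariant for the Miyamoto group actions, and restricting it to the $\Miy$-closure of $\{x,y\}$ in $\Cl$ and of $\{\bar x, \bar y\}$ in $S(2)^\circ$ produces an isomorphism of $C_2$-axets between the axet of $\Cl$ and that of $S(2)^\circ$.

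The conclusion is then immediate from Lemma \ref{spinfactoraxet}, which asserts that the axet of $S(2)^\circ$ is $X(\infty)$ when $\ch(\FF) = 0$ and $X(p)$ when $\ch(\FF) = p > 0$; transferring through the isomorphism above yields the same statement for $\Cl$. I do not anticipate any significant obstacle. The one point to keep straight is that the axet is the $\Miy$-closure of the specified generating axes rather than the full idempotent set of the algebra, but the Miyamoto-equivariance of $\pi$ handles this uniformly on both sides.
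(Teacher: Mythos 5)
Your proof is correct and follows essentially the same route as the paper: both arguments establish a Miyamoto-equivariant bijection between the axes of $\Cl$ and those of the quotient $\Cl/\la z\ra \cong S(2)^\circ$ (the paper phrases this as the difference of two distinct axes never lying in $\la z\ra$, you via the explicit parametrisation of idempotents by $\mu$), and then invoke Lemma \ref{spinfactoraxet}. The only cosmetic difference is that you spell out the intertwining relation $\pi\circ\tau_{a_\mu}=\tau_{\bar a_\mu}\circ\pi$ where the paper simply notes the induced faithful action on the quotient.
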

\begin{proof}
Let $X = x^{\Miy(X)} \cup y^{\Miy(X)}$ be the closed set of axes.  It can be seen that the difference of two distinct axes is never in the nil ideal $\la z \ra$ and so axes in $\Cl$ are in bijection with those in the quotient $\Cl/\la z \ra \cong S(2)^\circ$.  Since $\Miy(X)$ acts faithfully on $X$, it has an induced faithful action on $S(2)^\circ$ and the result follows from Lemma \ref{spinfactoraxet}.
\end{proof}

%%%%%%%%%%%%%%%%%%%%%%%%%%%%%

\section{Identifying shapes on $3$-generated axets}\label{sec:reduction}

The first interesting case for identifying completions of shapes on axets is that of $3$-generated axets.  In this paper, we consider a $3$-generated $C_2$-axet $X = (G, X, \tau)$ which is a one point extension of a $2$-generated axet, where the Miyamoto group has two orbits on $X$.  By Proposition \ref{1+n}, either $X$ is isomorphic to $X_1(1+n)$, with $n$ odd, or to $X_2(1+n)$, with $n$ twice odd.  Since we are primarily interested in the Monster type fusion law $\cM(\al,\bt)$ and all known $2$-generated algebras of this type are regular (see Definition \ref{def:skew}), we restrict ourselves to those axets which are regular.  In particular, $X_2(1+n)$ is skew, so here we concentrate on $X_1(1+n)$, $n$ odd, which is regular.

Note that, if $n=1$, then we do not have three axes and so $X$ is in fact $2$-generated, so we will assume that $n \geq 3$.

We now consider the shape graph on $X = X_1(1+n)$.  By definition, there exists a unique axis $a \in X$ which is fixed by the Miyamoto group and $a$ necessarily lies in any generating set of $X$ (see Section \ref{sec:1+n}).  Suppose that $X = \la a,b,c \ra$.  Then $Z := \la b,c \ra = X - \{ a\}$ is a $2$-generated subaxet.  Since $Z$ is $2$-generated, the choice of the algebra $A_Z$ on $Z$ in the shape determines the algebra $A_W$ for all subaxets $W$ of $Z$.  So the only part of the shape which is not identified by $A_Z$ is $A_Y$, where $Y = \la a, b \ra$.  Note that every subaxet $\la a, z\ra$, for $z \in Z$, is conjugate to $\la a,b \ra$.  Hence we do indeed just have two choices, $A_Y$ and $A_Z$.

Since $a$ is a fixed axis, $\tau_a =1$.  So in any completion $A$, the axis $a$ has trivial $\beta$-eigenspace\footnote{Recall that we identify an axis $a$ from the axet with its image in the algebra.}.  Hence $a$ is actually an axis of Jordan type $\alpha$.  The related Miyamoto automorphism that negates the $\alpha$-eigenspace will be denoted by $\sigma_a$ to distinguish it from $\tau_a$.  Note that we may still have $\sigma_a=1$, equivalently $A_\alpha(a) = 0$.

In the following proposition we consider completions for several different shapes $\Theta$ on $X \cong X(1+n)$ simultaneously, for all $n \geq 3$ odd.  For $z \in Z$, we define $z' = z^{\sg_a}$ and set $Z' = \{ z' : z \in Z \}$.

Recall from the introduction, that a $2$-generated algebra $\lla a,b \rra$ is called \emph{symmetric} whenever it admits an automorphism of order $2$ switching the two generating axes $a$ and $b$.

\begin{proposition}\label{1+nshapeprop}
Let $X = \la a,b,c \ra \cong X_1(1+n)$, $n \geq 3$ odd, be a $3$-generated axet, where $a$ is the fixed axis.  Let $Y = \la a,b\ra$ and $Z = X - \{a\}$.  Suppose that $\Theta$ is a shape on $X$ for $\cM(\alpha, \beta)$ and $A$ is a completion.
\begin{enumerate}
\item If $\sigma_a = 1$, then $A_Y \cong 2\B$ and $A \cong 1\A \oplus \lla b,c \rra$, where $\lla b,c \rra$ is symmetric.
\item If $\sigma_a \neq 1$, then $A_Y \in \cJ^2(\alpha) -\{ 2\B\}$ and $A$ contains an axet $\{a\} \cup Z \cup Z' \cong X_2(1+2n)$ with Miyamoto map $t$ given by $t_a = \sg_a$ and $t_z = t_{z'} = \tau_z$ for all $z \in Z$.  Moreover, $B := \lla b', c \rra$ is a symmetric $2$-generated subalgebra with $X(B) = Z \cup Z'$ and
\begin{enumerate}
\item $A = B$; or
\item $\alpha = \frac{1}{2}$, $A_Y \cong S(-2)$ and $\lla b, b' \rra \cong S(2)^\circ$; or
\item $\alpha = \frac{1}{2}$, $A_Y \cong S(0)$, $b' = b^-$ and $\lla b, b' \rra \cong 2\B$.
\end{enumerate}
\end{enumerate}
\end{proposition}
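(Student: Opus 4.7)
The plan is to exploit the fact that $\tau_a=1$ forces $A_\beta(a)=0$ in any completion $A$, so $a$ satisfies the Jordan law $\cJ(\alpha)$. I would then define $\sigma_a\in\Aut(A)$ to be the $\cJ(\alpha)$-Miyamoto involution negating $A_\alpha(a)$, and split according to whether $\sigma_a$ is trivial. In Case 1 ($\sigma_a=1$), the algebra decomposes as $A=\la a\ra\oplus A_0(a)$; applying $b^2=b$ to the induced eigenspace decomposition of any axis $b\in Z$ forces the projection of $b$ onto $\la a\ra$ to vanish (the alternative $b=a+u$ with $u\in A_0(a)$ idempotent puts both $a$ and $b$ in $A_1(b)$, contradicting primitivity). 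Hence $ab=0$, $A_Y\cong 2\B$, and $A_0(a)$ is an ideal equal to $\lla b,c\rra$. Symmetry of $\lla b,c\rra$ follows because the dihedral group $\Miy(Z)=D_{2n}$ (with $n$ odd, transitive on $Z\cong X(n)$) contains a reflection swapping any two axes.

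For Case 2 ($\sigma_a\neq 1$), I would first rule out $A_Y\cong 2\B$: if it held, then since all subaxets $\la a,z\ra$ are conjugate in $X$ and the shape assigns them the same algebra, every $z\in Z$ would have trivial $\alpha$-part with respect to $a$, forcing $\sigma_a=1$. I would then establish $A_Y\in\cJ^2(\alpha)$, i.e., the second generator $b$ also satisfies the Jordan fusion law in $A_Y$. Since $a$ already obeys $\cJ(\alpha)$, this reduces to the rigidity statement that any $2$-generated Monster-type algebra containing a Jordan axis is itself of Jordan type $\alpha$. This is the main obstacle, and I would argue it by appealing to Yabe's classification of $2$-generated $\cM(\alpha,\beta)$-algebras (noting that the only options with one axis having trivial $\beta$-eigenspace are precisely the algebras in $\cJ^2(\alpha)$). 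Combined with the previous step, $A_Y$ is one of the algebras in Theorem~\ref{Jordanclassification} other than $2\B$.

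Next I would build the enlarged axet on $\{a\}\cup Z\cup Z'$ inside $A$. Since $\{a\}$ is a singleton $\Miy(X)$-orbit, every $\tau_z$ fixes $a$, so the projection of $a$ onto $A_\beta(z)$ vanishes. Consequently $\sigma_a$ commutes with each $\tau_z$, and $\sigma_a\tau_z\sigma_a^{-1}=\tau_{z^{\sigma_a}}=\tau_{z'}$ then yields $\tau_{z'}=\tau_z$. Because $A_Y\not\cong 2\B$, each $z\in Z$ has non-trivial $\alpha$-part, so $z^{\sigma_a}\neq z$ and $Z\cap Z'=\emptyset$. The group $\la\Miy(X),\sigma_a\ra$ is then $D_{4n}$ with $\sigma_a$ central, acting on $Z\cup Z'$ as on a $2n$-gon, and together with the fixed axis $a$ this matches $X_2(1+2n)$. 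The subalgebra $B=\lla b',c\rra$ has axet $\la b',c\ra$ inside this $X_2(1+2n)$; a direct analysis of the dihedral action shows the closure of $\{b',c\}$ under $\la\tau_{b'},\tau_c\ra$ is $Z\cup Z'\cong X(2n)$, and a reflection in $D_{4n}\subseteq\Aut(A)$ swapping $b'$ and $c$ restricts to an involutory automorphism of $B$, witnessing its symmetry.

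Finally, I would distinguish the three cases. Since $A=\lla a,b,c\rra$ and $b,c\in B$, we have $A=B$ if and only if $a\in B$, which certainly holds whenever $A_Y\subseteq B$. Because $A_Y$ is $\sigma_a$-invariant, $b'\in A_Y$, so $\lla b,b'\rra\subseteq A_Y$; if equality holds then $A_Y\subseteq B$ and we land in case (a). If instead $\lla b,b'\rra\subsetneq A_Y$, Remark~\ref{onlyspin} applied inside $\cJ^2(\alpha)$ forces $A_Y\cong S(\delta)$ and hence $\alpha=\tfrac{1}{2}$; Lemma~\ref{spintau} with $y=b$ and $y'=b^{\sigma_a}=b'$ then leaves precisely two possibilities, namely $\delta=0$ (giving case (c) with $b'=b^-$ and $\lla b,b'\rra\cong 2\B$) or $\delta=-2$ (giving case (b) with $\lla b,b'\rra\cong S(2)^\circ$), exhausting all possibilities.
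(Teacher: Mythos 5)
Your overall architecture coincides with the paper's: the same dichotomy on $\sigma_a$, the same direct-sum argument when $\sigma_a=1$ (with symmetry of $\lla b,c\rra$ coming from a reflection in the odd dihedral group), the same construction of $Z'=Z^{\sigma_a}$ and of $B=\lla b',c\rra$ via the commutation $\sigma_a^{\tau_z}=\sigma_a$, and the same endgame using Remark~\ref{onlyspin} and Lemma~\ref{spintau} to produce the trichotomy (a)--(c). Most of these steps are carried out correctly.

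The one genuine problem is your justification that $A_Y\in\cJ^2(\alpha)$ when $\sigma_a\neq 1$. You reduce this to a ``rigidity statement'' --- that any $2$-generated algebra of Monster type containing a Jordan axis is itself of Jordan type $\alpha$ --- and propose to verify it from Yabe's classification. That route fails: Theorem~\ref{2genMonster} classifies only the \emph{symmetric} $2$-generated algebras, and at this point $A_Y$ is not known to be symmetric (the paper stresses that the non-symmetric classification is open), so the list cannot be consulted; nor is the rigidity statement itself established. It is also not needed. The fact you want follows directly from the definition of a shape: in $X_1(1+n)$ the subaxet $Y=\la a,b\ra$ equals $\{a,b\}$ with both $\tau_a$ and $\tau_b$ acting trivially, i.e.\ $Y\cong X(2)$, and $\theta_Y$ is required to be a \emph{full} embedding, so $X(A_Y)\cong X(2)$. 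Hence $\tau_a$ and $\tau_b$ fix every axis of $A_Y$; since the axes generate $A_Y$, both Miyamoto automorphisms are the identity on $A_Y$, so both $a$ and $b$ have trivial $\beta$-eigenspace there and $A_Y$ is a $2$-generated algebra of Jordan type $\alpha$. (Your exclusion of $2\B$ is fine.) A smaller point: your deduction that $Z\cap Z'=\emptyset$ from ``$z^{\sigma_a}\neq z$ for all $z$'' silently uses that the centraliser in $\Sym(Z)$ of the transitive group $D_{2n}$, $n$ odd, is trivial --- this is exactly where the oddness of $n$ enters and should be said explicitly, as the paper does.
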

\begin{proof}
Suppose first that $\sg_a=1$.  Then $A_\al(a) = \{ 0 \}$ and so $A = \la a \ra \oplus A_0(a)$ where both summands are subalgebras which annihilate each other.  Since $b$ and $c$ are primitive and not equal to $a$, by \cite[Lemma 5.9(2)]{axialstructure}, $b, c \in A_0(a)$.  Therefore, $A_0(a) = \lla b, c \rra$, $A \cong 1\A \oplus \lla b,c \rra$ and $A_Y \cong 2\B$.

Now assume that $\sg_a \neq 1$ and so $A_Y \in \cJ^2(\alpha) -\{ 2\B\}$.  Note that $\tau_b$ fixes $a$ and so $\sg_a^{\tau_b}=\sg_{a^{\tau_b}}=\sg_a$.  That is, $\sg_a$ and $\tau_b$ commute.  Similarly, $\sg_a$ and $\tau_c$ commute and so $\sg_a$ centralises $\Miy(A) = \la \tau_b, \tau_c \ra$.  Since $Z$ is a $\Miy(A)$-orbit of odd length, $\sg_a$ cannot preserve $Z$ and so $Z$ and $Z'$ are disjoint.  Since $\tau_{z'}=\tau_{z^{\sg_a}}=\tau_z^{\sg_a}=\tau_z$, we see that $\{a\} \cup Z \cup Z' \cong X_2(1+2n)$ with Miyamoto map $t$ as required.

Let $B := \lla b', c \rra$.  Then $B$ is invariant under $\la \tau_{b'}, \tau_c \ra = \la \tau_b, \tau_c \ra\cong \Miy(A)$.  Since $\Miy(A)$ is transitive on $Z$ and so also on $Z'$, the closure of $\{b', c \}$ is $X(B) = Z \cup Z'$.  Moreover, as $\sg_a$ switches $z$ and $z'$, it leaves $B$ invariant and so it is an automorphism of $B$.  Since $n$ is odd, there exists an involution $g \in \Miy(B) \cong D_{2n}$ such that $b^g = c$.  Then $\sg_ag$ is an involution switching $b'$ and $c$ and so $B$ is a symmetric $2$-generated subalgebra.

If $\lla b, b' \rra  = A_Y$ then $a \in B$ and hence $A = B$ is $2$-generated, which is case (a).  So suppose that $\lla b, b' \rra$ is a proper subalgebra of $A_Y$.  In particular, since $\lla b, b' \rra$ is at least $2$-dimensional, by Remark \ref{onlyspin}, $\alpha = \frac{1}{2}$ and $A_Y$ is a spin factor algebra.  Moreover, by Lemma \ref{spintau}, either $A_Y \cong S(0)$, $b' = b^-$ and $\lla b, b' \rra \cong 2\B$, which is case (c); or $A_Y \cong S(-2)$ and $\lla b, b' \rra \cong S(2)^\circ$, which is case (b).
\end{proof}

We will use the above result as a powerful reduction theorem.  Apart from the two exceptional cases, $A$ is either a direct sum of a symmetric $2$-generated algebra with a $1\A$ algebra, or a symmetric $2$-generated algebra on an $X(2n)$ axet.  Since the algebras in the outcome of this reduction theorem are symmetric, the statement can be further enhanced by using the classification of symmetric $2$-generated algebras of Monster type and checking them class by class.  In the remainder of this section, we develop a useful condition coming from Proposition \ref{1+nshapeprop} that we can use when going through the list of symmetric algebras.

In case 1, the algebra $\lla b,c \rra$ must have axet $X(n)$, with $n$ finite and odd.  In case 2(a), the algebra $B$ must have axet $X(2n)$, $n$ finite and odd, and satisfy the following property:

\begin{enumerate}
\item[(J)] There exists an additional axis $a$ of Jordan type $\al$, which is fixed by $\Miy(X)$, such that $\sg_a$ switches the two halves of $X \cong X(2n)$. \label{J}
\end{enumerate}

Note that, in such an algebra, given $x \in X$, we can identify $x'$ without knowing $a$, or $\sg_a$.  Indeed, $x'$ is the opposite axis in the axet $X$.  That is, the unique axis such that $\tau_x = \tau_{x'}$.  The following lemma gives a condition which we will use to identify which algebras have property (J).

\begin{lemma}\label{propertyJlem}
Suppose that $A$ is a symmetric $2$-generated axial algebra of Monster type which has property \textup{(}J\textup{)}.  Then either the additional axis $a$ of Jordan type $\al$ is contained in $\bigcap_{x \in X} \lla x, x' \rra$ \textup{(}recall that $x' = x^{\sg_{a}}$\textup{)}; or
\begin{enumerate}
\item for every $x \in X(A)$, $\lla x, x'\rra \cong 2\B$ and $\lla a, x \rra \cong S(0)$; or
\item for every $x \in X(A)$, $\lla x, x'\rra \cong S(2)^\circ$ and $\lla a, x \rra \cong S(-2)$.
\end{enumerate}
In particular, $\al = \frac{1}{2}$ in these two exceptional cases.
\end{lemma}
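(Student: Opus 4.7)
My plan is to apply Proposition \ref{1+nshapeprop} to suitably chosen $3$-generated subalgebras. Property (J) supplies the additional Jordan axis $a$ fixed by $\Miy(X)$ with $\sg_a$ non-trivial; I would first pass to an ambient algebra $\tilde A$ in which $a$ is realised as an axis, so that $\sg_a$ becomes a genuine automorphism of $\tilde A$ preserving $A$, and each $\tau_z \in \Miy(A)$ (for $z \in X$) both fixes $a$ and commutes with $\sg_a$, exactly as in the centralisation argument in the proof of Proposition \ref{1+nshapeprop}.

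Next, for each fixed $x \in X$, let $Z$ denote the $\Miy(A)$-orbit of $x$. Since $A$ has axet $X(2n)$ with $n$ odd, $Z$ is one of the two orbits of $\Miy(A) \cong D_{2n}$ on $X(2n)$ and $|Z| = n$ is odd. I would then choose $y \in Z$ so that $\la x, y\ra = Z$, which is possible since $n \geq 3$; for instance, take $y$ adjacent to $x$ in the $n$-gonal axet $Z$. Then $\la a, x, y\ra = \{a\} \cup Z \cong X_1(1+n)$ with $n$ odd, so Proposition \ref{1+nshapeprop} applies to the $3$-generated subalgebra $\lla a, x, y\rra \subseteq \tilde A$. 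Because $\sg_a \neq 1$, we land in case 2(a), 2(b), or 2(c) of that proposition. Case 2(a) yields $\lla x, x'\rra = A_Y = \lla a, x\rra$ by tracing the proof of Proposition \ref{1+nshapeprop}, whence $a \in \lla x, x'\rra$; case 2(b) yields $\al = \tfrac{1}{2}$, $\lla a, x\rra \cong S(-2)$ and $\lla x, x'\rra \cong S(2)^\circ$; case 2(c) yields $\al = \tfrac{1}{2}$, $\lla a, x\rra \cong S(0)$, $x' = x^-$ and $\lla x, x'\rra \cong 2\B$. These last two match exactly the exceptional possibilities (2) and (1) of the lemma.

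The final step will be to prove uniformity in $x$. For any $g \in \Miy(A)$, since $g$ fixes $a$ and commutes with $\sg_a$, $g$ sends $(x, x')$ to $(x^g, (x^g)')$ and conjugates $\lla a, x\rra$ to $\lla a, x^g\rra$ and $\lla x, x'\rra$ to $\lla x^g, (x^g)'\rra$. Hence the isomorphism types of these two subalgebras and the property $a \in \lla x, x'\rra$ all transfer along $g$. Since $\Miy(A)$ acts transitively on each of its orbits $Z$ and $Z' = Z^{\sg_a}$, and since the unordered pair $\{x, x'\}$ does not distinguish between $x \in Z$ and $x \in Z'$, the case from the previous paragraph is independent of $x$. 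Therefore either $a$ lies in $\lla x, x'\rra$ for every $x \in X$, giving $a \in \bigcap_{x \in X}\lla x, x'\rra$, or case 2(b) or 2(c) holds uniformly, giving the respective exceptional case in the statement; in these two exceptional cases $\al = \tfrac{1}{2}$, as required.

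The main obstacle I expect is the set-up in the first paragraph: ensuring that $a$ sits inside a genuine ambient algebra $\tilde A$ so that $\sg_a$ is a well-defined Miyamoto involution centralising $\Miy(A)$ is slightly delicate, but property (J) together with the centralisation argument reused from the proof of Proposition \ref{1+nshapeprop} should handle this. Once past that, the argument reduces to Proposition \ref{1+nshapeprop} applied axis-by-axis, together with the orbit-transitivity of $\Miy(A)$ on pairs $(x, x')$.
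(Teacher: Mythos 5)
Your proposal is correct, but it reaches the conclusion by a detour through Proposition \ref{1+nshapeprop} rather than by the paper's direct argument, and at bottom the two proofs run on the same engine. The paper simply notes that $\lla a,x\rra$ is $\ad_a$-invariant, hence a sum of $\ad_a$-eigenspaces, so $\sg_a$ preserves it and $x'=x^{\sg_a}\in\lla a,x\rra$; thus $\lla x,x'\rra\leq\lla a,x\rra$, and Remark \ref{onlyspin} together with Lemma \ref{spintau} says this containment is proper only when $\al=\tfrac{1}{2}$ and the pair is $\big(2\B,S(0)\big)$ or $\big(S(2)^\circ,S(-2)\big)$. You instead package each $x$ into a $3$-generated subalgebra $\lla a,x,y\rra$ on the axet $X_1(1+n)$ and quote the trichotomy of Proposition \ref{1+nshapeprop}; since the relevant portion of that proposition's proof is precisely the containment argument just described, your route buys nothing new but costs extra scaffolding (verifying $\{a\}\cup Z\cong X_1(1+n)$, that $\sg_a$ restricts non-trivially, and the uniformity-in-$x$ step). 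Three remarks. First, the worry about an ambient algebra $\tilde A$ is a non-issue: property (J) asserts that $a$ is an axis of $A$ itself, so $\sg_a$ is an automorphism of $A$ and restricts to any subalgebra containing $a$ by the eigenspace argument above. Second, you are right that you must ``trace the proof'' of Proposition \ref{1+nshapeprop} rather than use its statement as a black box: its case 2(a) is stated as $A=B$, and only the internal dichotomy $\lla b,b'\rra=A_Y$ versus $\lla b,b'\rra\subsetneq A_Y$ delivers $a\in\lla x,x'\rra$ in the non-exceptional case. Third, your closing uniformity argument (conjugation by $\Miy(A)$ within each orbit, and $\lla a,x'\rra=\lla a,x\rra$, $\lla x',x\rra=\lla x,x'\rra$ across the two halves) is sound and in fact spells out a point the paper's own proof leaves implicit.
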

\begin{proof}
Since $a$ is fixed by $\Miy(X)$, $\lla a,x \rra$, for $x \in X$, is a subalgebra with trivial Miyamoto group.  Equivalently, it is an axial algebra of Jordan type $\al$.  Since $\tau_x$ fixes $a$, $\sg_a^{\tau_x} = \sg_{a^{\tau_x}} = \sg_a$ and so $\tau_x$ and $\sg_a$ commute.  Hence, $\tau_{x^{\sg_a}} = \tau_x$.  However, there is only one other axis in $X$ with the same Miyamoto involution as $x$, namely the opposite axis $x'$.  Since $\sg_a$ switches the two orbits, $x^{\sg_a} = x'$ and so $x' \in \lla a, x \rra$.  Clearly, $\lla x, x' \rra \leq \lla a, x \rra$, as $x' = x^{\sg_a}$.  Moreover this containment is proper if and only if $\alpha= \frac{1}{2}$ and $\big(\lla x, x' \rra, \lla a, x \rra\big)$ is isomorphic to either $\big(2\B, S(0)\big)$, or to $\big(S(2)^\circ, S(-2)\big)$.  Therefore, if $\lla x, x' \rra \not \cong 2\B, S(2)^\circ$, then $a$ must lie in $\bigcap_{x \in X} \lla x, x' \rra$.  
\end{proof}

In the next section, we identify the algebras which have property (J).  We will discuss the two exceptional situations, 2(b) and 2(c), from Proposition \ref{1+nshapeprop} in Section \ref{sec:forbidden}.

%%%%%%%%%%%%%%%%%%%%%%%%%%%%%%%%%%%%

\section{Symmetric $2$-generated $\cM(\alpha, \beta)$-axial algebras}\label{sec:2genalbt}

A symmetric $2$-generated axial algebra $A = \lla a,b \rra$ is one where there is an involutory automorphism $f$, called the \emph{flip}, which switches the generating axes $a$ and $b$.  Since the algebra is symmetric, its axet $X = a^{\Miy(A)} \cup b^{\Miy(A)}$ is regular (cf. Definition \ref{def:skew}).  It will be convenient below to number the axes in a standard way.  We let $a_0 = a$, $a_1 = b$ and for $k \in \Z$, we let $a_{2k+\epsilon} = {a_\epsilon}^{\rho^k}$, where $\epsilon = 0,1$ and $\rho = \tau_a \tau_b$.  Our notation does not mean that every axet is infinite, rather if the axet is finite, then an axis will have multiple names with $a_i = a_j$ if and only if $i \equiv j \mod |X|$.

Let us now turn to the statement of the classification of the symmetric case.

\begin{theorem}\textup{\cite{yabe, highwater5, HWquo}}\label{2genMonster}
A symmetric $2$-generated $\cM(\al, \bt)$-axial algebra is one of the following:
\begin{enumerate}
\item an axial algebra of Jordan type $\al$, or $\bt$;
\item a quotient of the Highwater algebra $\cH$, or its characteristic $5$ cover $\hatH$, where $(\al,\bt) = (2, \frac{1}{2})$; or
\item one of the algebras listed in \textup{\cite[Table 2]{yabe}}.
\end{enumerate}
\end{theorem}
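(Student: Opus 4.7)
The plan is to synthesize three existing classification results in the literature, matching each symmetric $2$-generated $\cM(\al,\bt)$-algebra to one of the three outcomes. The broad strategy has two stages: a structural reduction that splits off the Jordan subcase, and a case analysis driven by the fusion law $\cM(\al,\bt)$ for genuinely ``Monster'' algebras, with the infinite-dimensional Highwater family handled separately.

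First I would split according to the effective fusion law satisfied by each axis. If every axis of $A$ has trivial $\bt$-eigenspace, then the axes actually satisfy $\cJ(\al)$ and $A$ is an axial algebra of Jordan type $\al$; symmetrically if the $\al$-eigenspace is trivial everywhere. Either way, Theorem \ref{Jordanclassification} delivers outcome (1). From here on I may assume that both non-trivial eigenspaces are used by at least one axis.

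Next I would set up the Yabe framework. Let $f$ be the flip, $\rho = \tau_{a_0}\tau_{a_1}$, and number axes by $a_{2k+\epsilon} = a_\epsilon \rho^k$ so that $f$ swaps $a_n$ and $a_{1-n}$. The $C_2$-grading of $\cM(\al,\bt)$ gives $A = A^+ \oplus A^-$, where $A^+$ contains the symmetric combinations $a_n + a_{1-n}$ together with auxiliary elements $s_n$ extracted from $a_0 a_n$ after projecting onto the span of the existing axes. Applying the fusion law to the double products $a_0(a_0 a_n)$ and $a_1(a_0 a_n)$, and using the Seress Lemma for the $\{1,0\}$-part, yields linear recursions that in almost all parameter regimes force the span of $\{a_{-1}, a_0, a_1, a_2, s_1\}$ to be closed under multiplication. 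The fusion law then reduces to a system of polynomial equations in $\al$, $\bt$ and the remaining structure constants; enumerating its solutions gives Table 2 of \cite{yabe}, and hence outcome (3).

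Finally I would treat the exceptional locus where the finiteness argument collapses, namely $(\al,\bt) = (2,\tfrac{1}{2})$. Here the Highwater algebra $\cH$ and, in characteristic $5$, its cover $\hatH$ furnish infinite-dimensional symmetric $2$-generated examples that serve as universal objects. One must show that every symmetric $2$-generated $\cM(2,\tfrac{1}{2})$-algebra is a quotient of $\cH$ or $\hatH$ and then enumerate their ideals; this is done in \cite{highwater5} and \cite{HWquo}, producing outcome (2). The principal obstacle is the finiteness step inside the Yabe analysis: ruling out algebraically independent new elements $s_n$ for $n \geq 2$ outside the Highwater locus requires a careful handling of the eigenspace recursions, and the resulting polynomial system must then be completely solved, which is substantial. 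A secondary but still delicate obstacle is the complete description of the ideal lattices of $\cH$ and $\hatH$, which needs separate arguments in small characteristic.
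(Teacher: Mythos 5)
This theorem is not proved in the paper at all: it is imported verbatim from \cite{yabe}, \cite{highwater5} and \cite{HWquo}, and your outline is a fair reconstruction of the strategy actually used in those references (reduction to Jordan type when one graded eigenspace vanishes everywhere, Yabe's recursion/polynomial-system analysis bounding the axial dimension generically, and a separate treatment of the exceptional locus). One statement in your final step is, however, genuinely wrong as written: it is \emph{not} true that every symmetric $2$-generated $\cM(2,\tfrac12)$-algebra is a quotient of $\cH$ or $\hatH$. Several families from \cite[Table 2]{yabe} survive at these parameters --- for instance $3\A(2,\tfrac12)$, $\IY_3(2,\tfrac12,\mu)$ and $\IY_5(2,\tfrac12)$ --- and these are not Highwater quotients (compare dimensions and axets with $\cH_n$, which has dimension $n+\lfloor n/2\rfloor$). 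The correct assertion, which is what \cite{highwater5} and \cite{HWquo} establish, is that the symmetric $2$-generated $\cM(2,\tfrac12)$-algebras \emph{not already accounted for} by outcomes (1) and (3) are exactly the quotients of $\cH$ (and, in characteristic $5$, of $\hatH$); the three outcomes of the theorem are not mutually exclusive. With that correction your sketch matches the cited proofs, though of course both the finiteness step in Yabe's analysis and the determination of the ideal lattice of $\cH$ and $\hatH$ remain substantial pieces of work that you are deferring entirely to the literature.
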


We organise the list in \cite[Table 2]{yabe} into twelve families, each depending on its own set of parameters. Below we split the families into two main types: those with a finite and fixed axet and those where the axet is generically infinite but can be finite depending on the parameters.

\begin{enumerate}
\item $3\A(\al,\bt)$, $4\A(\frac{1}{4}, \bt)$, $4\B(\al, \frac{\al^2}{2})$, $4\J(2\bt, \bt)$, $4\Y(\frac{1}{2}, \bt)$, $4\Y(\al, \frac{1-\al^2}{2})$, \newline$5\A(\al, \frac{5\al-1}{8})$, $6\A(\al, \frac{-\al^2}{4(2\al-1)})$, $6\J(2\bt, \bt)$ and $6\Y(\frac{1}{2}, 2)$;
\item $\IY_3(\al, \frac{1}{2}, \mu)$ and $\IY_5(\al, \frac{1}{2})$.\footnote{Yabe's names for these algebras are: (1) $\textrm{III}(\al, \bt,0)$, $\textrm{IV}_1(\frac{1}{4}, \bt)$, $\textrm{IV}_2(\al, \frac{\al^2}{2})$, $\textrm{IV}_1(2\bt, \bt)$, $\textrm{IV}_2(\frac{1}{2}, \bt)$, $\textrm{IV}_2(\al, \frac{1-\al^2}{2})$, $\textrm{V}_1(\al, \frac{5\al-1}{8})$, $\textrm{VI}_2(\al, \frac{-\al^2}{4(2\al-1)})$, $\textrm{VI}_1(2\bt, \bt)$ and $\textrm{IV}_3(\frac{1}{2}, 2)$ and (2) $\textrm{III}(\al, \frac{1}{2}, \dl)$, where $\dl = -2\mu-1$, and $\textrm{V}_2(\al, \frac{1}{2})$.  Here the Roman numeral indicates axial dimension.}
\end{enumerate}

Note that not all these algebras exist for all parameters.  For an algebra here to be an axial algebra, it is clear that $1$, $0$, $\al$ and $\bt$ must be distinct.  For all the algebras except $3\A(\al, \bt)$ and $6\A(\al, \frac{-\al^2}{4(2\al-1)})$, these are the only restrictions.  For the two exceptional families, we must also exempt $\al = \frac{1}{2}$.

In our notation, those algebras with a finite axet $X(n)$ we label $n\L$, and for those which generically have the infinite axet $X(\infty)$ we use $\mathrm{I}\L_d$ instead, for various letters $\L$.  This notation better fits with our focus on axets, rather than Yabe's focus on axial dimension $d$.  If the algebra can be specialised to a Norton-Sakuma algebra (see, for example, \cite[Table 3]{IPSS}), we use the corresponding letters for this, e.g. $\A$, $\B$, or $\C$.  For those which arise from Joshi's double axis construction \cite{j, doubleMatsuo}, we use $\J$ (these necessarily have $\al = 2\bt$) and for those which were found by Yabe, we use $\Y$.

We need to find among this list all the algebras which satisfies the conditions stated at the end of the last section.  That is, those with finite odd axet and those with axet $X(2n)$, $n \geq 3$ odd, which have property (J).  Those algebras which always have a finite odd axet can be read off the list above.  Namely, $3\A(\al,\bt)$ and $5\A(\al, \frac{5\al-1}{8})$.  Those which always have a finite axet $X(2n)$, $n$ odd necessarily have axet $X(6)$ and are $6\A(\al, \frac{-\al^2}{4(2\al-1)})$, $6\J(2\bt, \bt)$ and $6\Y(\frac{1}{2}, 2)$.  For those algebras, which generically have an infinite axet, we must determine when this can be finite and when the algebra has property (J).

%%%%%

\subsection{Jordan type algebras}

The axets for Jordan type algebras were discussed in Subsection \ref{sec:jordanaxet}, in particular when they are finite (cf. Lemmas \ref{spinfactoraxet} and \ref{Cl00axet}).

We now discuss property (J).  The only algebras which possibly have axet $X(2n)$, for $n \geq 3$ odd, are some spin factor algebras.  Since every non-trivial idempotent in a spin factor algebra has eigenvalues $1$, $0$ and $\frac{1}{2}$ (see discussion after Definition \ref{def:spinfactor}), there does not exist any additional axis of Jordan type $\al$ with $\al \neq \frac{1}{2}$.  Hence no $2$-generated axial algebra of Jordan type $\bt$ has property (J).

%%%%%

\subsection{Algebras with axet $X(6)$}\label{sec:X6}

By Theorem \ref{2genMonster}, there are three families of algebras which always have the axet $X(6)$, namely $6\A(\al, \frac{-\al^2}{4(2\al-1)})$, $6\J(2\bt, \bt)$ and $6\Y(\frac{1}{2}, 2)$, and we give these in Table \ref{tab:multX6}.  We only give some products, the remaining can be obtained from using the action of the Miyamoto group.  Since the axet $X(n)$ was defined as the vertex set of the $n$-gon, the axes are numbered naturally by the integers taken modulo $n$, and here we have $n=6$.  Note that we choose different bases to those in \cite{yabe} to try to better reflect the structure of the algebras\footnote{Compared to Yabe's bases, for $6\A(\al, \frac{-\al^2}{4(2\al-1)})$ we have $c = \hat{a}_i + \hat{a}_{i+3} - \frac{2}{\al}\hat{a}_i \hat{a}_{i+3}$, $z = -\frac{3\al-2}{\al}\sum_{i = -2}^3 \hat{a}_i + \frac{8(2\al-1)(3\al-2)(5\al-2)}{\al^3(9\al-4)}\hat{p}_1 - \frac{8(2\al-1)}{\al(5\al-2)} \hat{q}$. For $6\J(2\bt, \bt)$, we have $u = \hat{a}_i + \hat{a}_{i+3} - \frac{2}{\al}\hat{a}_i \hat{a}_{i+3}$, $w = 2(\hat{a}_i + \hat{a}_{i+1}) - \frac{4}{\al}\hat{a}_i \hat{a}_{i+1}$ as per the basis in \cite{doubleMatsuo}.  For $6\Y(\frac{1}{2},2)$, we have $a_4 = a_0+a_2 - a_0a_2$, $d = a_{-1}-a_2$, $z = q$.}.

For $6\J(2\bt, \bt)$, the only values of $\beta$ we need to exclude (forbidden values) are those which cause the eigenvalues to coincide, namely $\bt \neq 0,\frac{1}{2},1$.  For $6\A(\al, \frac{-\al^2}{4(2\al-1)})$, the eigenvalues coincide when $\al = 0,1, \frac{4}{9}, -4 \pm 2\sqrt{5}$ and hence these are forbidden.  In addition, clearly we must exclude $\al=\frac{1}{2}$ for $\bt = \frac{-\al^2}{4(2\al-1)}$ to make sense.  These are the only forbidden values for $6\A(\al, \frac{-\al^2}{4(2\al-1)})$.  In particular, in our new basis for $6\A(\al, \frac{-\al^2}{4(2\al-1)})$, $\al = \frac{2}{5}$ (excluded in \cite[Table 2]{yabe} and \cite[Table 2]{gendihedral}) and $\al = \frac{1}{3}$ (excluded in \cite[Table 2]{gendihedral}) both lead to valid algebras.

\begin{table}[h!tb]
\setlength{\tabcolsep}{4pt}
\renewcommand{\arraystretch}{1.5}
\centering
\footnotesize
\begin{tabular}{c|c|c}
Type & Basis & Products \& form \\ \hline
$6\A(\al, \frac{-\al^2}{4(2\al-1)})$ & $a_{-2}, \dots, a_3, c, z$ &
 \begin{tabular}[t]{c}
  {$\begin{aligned} a_i a_{i+1} &= \tfrac{\bt}{2}(a_i + a_{i+1} - a_{i+2} - a_{i+3} \\ 
    &\phantom{{}= \tfrac{\bt}{2}({}} {} - a_{i-1} - a_{i-2}+c+z) \end{aligned}$} \\
  $a_i a_{i+2} = \frac{\al}{4}(a_i + a_{i+2}) + \frac{\al(3\al-1)}{4(2\al-1)}a_{i+4} -\frac{\al(5\al-2)}{8(2\al-1)}z$ \\
  $a_ia_{i+3} = \frac{\al}{2}(a_i + a_{i+3} -c)$ \\
  $a_i c = \frac{\al}{2}(a_i + c - a_{i+3})$ \\
  $a_i z = \frac{\al(3\al-2)}{4(2\al-1)}(2a_i -a_{i-2} - a_{i+2} +z)$ \\
  $c^2 = c$, $cz = 0$, $z^2 = \frac{(\al+2)(3\al-2)}{4(2\al-1)}z$ \\
  $(a_i, a_i) = (c,c) = 1$, $(a_i, a_{i+1}) = -\frac{\al^2(3\al-2)}{(4(2\al-1))^2}$, \\
  $(a_i, a_{i+2}) = \frac{\al(21\al^2-18\al+4)}{(4(2\al-1))^2}$, $(a_i, a_{i+3}) = (a_i, c) = \frac{\al}{2}$\\
  $(a_i,z) = \frac{\al(7\al-4)(3\al-2)}{8(2\al-1)^2}$\\
  $(c, z) = 0$, $(z,z) = \frac{(\al+2)(7\al-4)(3\al-2)}{8(2\al-1)^2}$
  \vspace{4pt}
  \end{tabular}
\\
$6\J(2\bt, \bt)$ & $a_{-2}, \dots, a_3, u, w$ &
  \begin{tabular}[t]{c}
  $a_ia_{i+1} = \frac{\bt}{2}(2(a_i + a_{i+1}) -w)$ \\
  $a_i a_{i+2} = \frac{\bt}{2}(a_i + a_{i+2} - a_{i+4})$ \\
  $a_ia_{i+3} = \frac{\al}{2}(a_i + a_{i+3} -u)$ \\
  $a_i u = \frac{\al}{2}(a_i + u - a_{i+3})$ \\
  $a_i w = \frac{\al}{2}(2a_i -a_{i-1} - a_{i+1} + w)$ \\
  $u^2 = u$, $uw = \bt u$, $w^2 = (\bt+1)w-\bt u$ \\
  $(a_i, a_i) = (u,u) = 1$, $(a_i, a_{i+1}) = (u,w) = \bt$\\
  $(a_i, a_{i+2}) = \frac{\bt}{2}$, $(a_i, a_{i+3}) = (a_i, u) = \frac{\al}{2}$, \\
  $(a_i, w) = \al$, $(w,w) = \bt +2$
  \vspace{4pt}
  \end{tabular}
\\
$6\Y(\frac{1}{2}, 2)$ & \begin{tabular}[t]{c} $a_0, a_2, a_4, d, z$ \\ \mbox{where} \\ $a_i := a_{i+3}+d$ \end{tabular} &
  \begin{tabular}[t]{c}
  $a_i a_{i+2} = (a_i + a_{i+2}-a_{i+4})$ \\
  $a_i d = \frac{1}{2}d +z $ \\
  $d^2 = -2 z$, $za_i = zd = z^2= 0$ \\
  $(a_i, a_i) = 1$, $(a_i, a_j) = 1$\\
  $(d, x) = (z,x) = 0$, for all $x \in A$
  \vspace{4pt}
  \end{tabular}
 
\end{tabular}
\caption{Symmetric $2$-generated $\cM(\al,\bt)$-axial algebras on $X(6)$}\label{tab:multX6}
\end{table}

\begin{lemma}
The algebras $6\A(\al, \frac{-\al^2}{4(2\al-1)})$ and $6\J(2\bt, \bt)$ have property \textup{(J)} with the additional Jordan type axis being $c$ and $u$, respectively \textup{(}see Table $\ref{tab:multX6}$ on page $\pageref{tab:multX6}$\textup{)}.
\end{lemma}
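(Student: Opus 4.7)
The plan is to verify, separately for $c$ in $6\A(\al, \frac{-\al^2}{4(2\al-1)})$ and $u$ in $6\J(2\bt, \bt)$, that the proposed element is (i) a primitive idempotent satisfying the $\cJ(\al)$ fusion law in the full algebra, (ii) fixed by $\Miy(X) \cong D_6$, and (iii) has its Miyamoto involution $\sg$ swapping the two length-$3$ orbits of $\Miy(X)$ on $X(6)$. In both cases, the relations $a_ia_{i+3} = \tfrac{\al}{2}(a_i+a_{i+3}-c)$ and $a_ic = \tfrac{\al}{2}(a_i+c-a_{i+3})$ (and likewise with $u$ in $6\J$) identify $\la a_i, a_{i+3}, c\ra \cong 3\C(\al)$ as a subalgebra, so $c$ (resp.\ $u$) is already an axis of Jordan type $\al$ inside this subalgebra; the new content is that the fusion law persists in the full $8$-dimensional algebra.

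Focusing first on $c \in 6\A$, the key computations
\[
c(a_i + a_{i+3}) = \al c, \qquad c(a_i - a_{i+3}) = \al(a_i - a_{i+3}),
\]
together with $c^2 = c$ and $cz = 0$, yield the eigenspace decomposition
\begin{align*}
A_1(c) &= \la c\ra, \quad A_\al(c) = \la a_i - a_{i+3} : i = -2, -1, 0\ra, \\
A_0(c) &= \la (a_i+a_{i+3})-(a_0+a_3) \ (i=-2,-1),\ (a_0+a_3) - \al c,\ z\ra,
\end{align*}
with $A_\bt(c) = 0$ and total dimension $1+3+4=8$. Thus $c$ is primitive, and the $\cJ(\al)$ fusion law reduces to checking only $A_0 A_\al \subseteq A_\al$ and $A_\al A_\al \subseteq A_1 + A_0$ on basis vectors, using the multiplication table.

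For the Miyamoto-group condition, the $3\C(\al)$ subalgebra structure places $a_{i+3} - c \in A_\al(a_i)$ and $a_{i+3} + c - \al a_i \in A_0(a_i)$; since $\tau_{a_i}$ negates only $A_\bt(a_i)$, it fixes both vectors, and so fixes $c$ (and $a_{i+3}$) individually, whence $c$ is $\Miy(X)$-fixed. Finally, $\sg_c$ negates every $a_i - a_{i+3} \in A_\al(c)$, giving $\sg_c(a_i) = a_{i+3}$; as the shift $i \mapsto i+3$ interchanges the two $\Miy(X)$-orbits $\{a_0, a_2, a_4\}$ and $\{a_1, a_3, a_5\}$, property (J) holds. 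The argument for $u \in 6\J$ is entirely parallel, the sole difference being that $uw = \bt u$ (in place of $cz = 0$) puts $w - \bt u$ in $A_0(u)$ in place of $z$.

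I expect the main obstacle to be the verification $A_\al(c) A_\al(c) \subseteq A_1(c) + A_0(c)$: expanding the products $(a_i - a_{i+3})(a_j - a_{j+3})$ for $i \neq j$ draws in the more intricate formulas for $a_i a_{i+1}$ and $a_i a_{i+2}$ involving $c$, $z$ and the axes themselves, and one must check that all coefficients of the $A_\al$-basis vectors cancel after simplification; this is also the step where the forbidden values of $\al$ (those causing eigenvalue coincidences) would matter for the decomposition to make sense.
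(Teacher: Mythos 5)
Your overall strategy---compute the eigenspace decomposition of $\ad_c$, verify the $\cJ(\al)$ fusion law in the full algebra, and check that $c$ is $\Miy(X)$-fixed while $\sg_c$ interchanges the two orbits on $X(6)$---is the same as the paper's, and your eigenspace decomposition of the $8$-dimensional algebra agrees with the one in the proof (your spanning set for $A_0(c)$ is the same space as $\la z,\ a_i+a_{i+3}-\al c\ra$). However, there is a gap in your list of required verifications: the Jordan fusion law $\cJ(\al)$ also has $0\star 0\subseteq\{0\}$, so you must check $A_0(c)A_0(c)\subseteq A_0(c)$, and your claim that the fusion law ``reduces to checking only'' $A_0A_\al\subseteq A_\al$ and $A_\al A_\al\subseteq A_1+A_0$ omits it. This is not harmless: the products $z(a_i+a_{i+3}-\al c)$ and $(a_i+a_{i+3}-\al c)(a_{i+1}+a_{i+4}-\al c)$ draw in the full formulas for $a_iz$, $a_ia_{i+1}$ and $a_ia_{i+2}$, and in the paper this containment is precisely the one non-trivial computation that is carried out explicitly.

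On the graded part you also take a genuinely more laborious route. You propose to verify $A_0A_\al\subseteq A_\al$ and $A_\al A_\al\subseteq A_1\oplus A_0$ by expanding products of basis vectors, and you correctly identify $A_\al A_\al$ as the main obstacle. The paper avoids both checks entirely: it defines the linear map $\sg_c$ fixing $c$ and $z$ and swapping $a_i\leftrightarrow a_{i+3}$, verifies once that this is an algebra automorphism, and then notes that since $\sg_c$ is an involution fixing $A_+:=A_1(c)\oplus A_0(c)$ and negating $A_-:=A_\al(c)$, the containments $A_-A_-\subseteq A_+$ and $A_+A_-\subseteq A_-$ follow formally. This single automorphism check is much cheaper than expanding $(a_i-a_{i+3})(a_j-a_{j+3})$ and watching coefficients cancel, and it produces as a by-product exactly the orbit-swapping automorphism you need for property (J) (whereas you obtain $\sg_c$ only after the fusion law is fully established). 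Your arguments that $c$ is fixed by each $\tau_{a_i}$ (because the $3\C(\al)$ subalgebra lies in $A_{\{1,0,\al\}}(a_i)$) and that $\sg_c(a_i)=a_{i+3}$ swaps the two orbits are correct and fill in details the paper leaves implicit; the treatment of $u$ in $6\J(2\bt,\bt)$ with $w-\bt u\in A_0(u)$ is likewise fine.
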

\begin{proof}
We just show the proof for $6\A(\al, \frac{-\al^2}{4(2\al-1)})$.  The proof for $6\J(2\bt, \bt)$ is similar.  First, observe from the multiplication defined in Table \ref{tab:multX6}, that $c$ is the third axis in each $\la a_i, a_{i+3}, c \ra \cong 3\C(\al)$.  Since additionally $cz = 0$, by counting dimensions, we see that $c$ is semisimple with eigenvalues $1$, $0$ and $\al$ and $A_1(c) = \la c \ra$, $A_0(c) = \la z, a_i +a_{i+3} - \al c : i = 1,2,3\ra$ and $A_\al(c) = \la a_i-a_{i+3} : i = 1,2,3 \ra$.

We will now compute the fusion law for $c$.  Define $\sg_c \colon A \to A$ to be the linear map that fixes $c$ and $z$ and switches $a_i$ with $a_{i+3}$, for $i = 1,2,3$.  One can easily check that $\sg_c$ preserves the algebra product and hence is an automorphism of $A$.  Now, observe that $\sg_c$ fixes $A_+ := A_1(c) \oplus A_0(c)$ and negates $A_- := A_\al(c)$.  Since $\sg_c$ is an involutory automorphism, we have $A_- A_- \subseteq A_+$, $A_+ A_- \subseteq A_-$ and $A_+A_+ \subseteq A_+$.  Clearly we have $A_1(c)A_1(c) = A_1(c)$ and $A_1(c) A_0(c) = 0$, so it remains to show that $A_0(c) A_0(c) \subseteq A_0(c)$.  For this we calculate.  We have $z^2 \in \la z \ra \subseteq A_0(c)$ and, since $\la a_i, a_{i+3}, c \ra \cong 3\C(\al)$, $(a_i+a_{i+3}-\al c)^2 \in A_0(c)$.  We have 
\begin{align*}
z(a_i+a_{i+3}-\al c) &= \tfrac{\al(3\al-2)}{4(2\al-1)}\big( 2a_i -a_{i-2}-a_{i+2} +z \\
&\phantom{{}=\tfrac{\al(3\al-2)}{4(2\al-1)}\big({}} {} + 2a_{i+3} -a_{i+1}-a_{i+5} +z +0 \big) \\
&=\tfrac{\al(3\al-2)}{4(2\al-1)}\big( 2z + 2(a_i+a_{i+3} -\al c) \\
&\phantom{{}=\tfrac{\al(3\al-2)}{4(2\al-1)}\big({}} {} -(a_{i+1} + a_{i+4} - \al c) - (a_{i+2} + a_{i+5} - \al c) \big)
\end{align*}
which is in $A_0(c)$.  Another straightforward, but slightly longer computation gives
\begin{align*}
(a_i+a_{i+3} -\al c)(a_{i+1}+a_{i+4}-\al c) &= -\tfrac{\al(5\al-1)}{4(2\al-1)} z + \tfrac{\al}{4}(a_i +a_{i+3} - \al c) \\
&\phantom{{}={}} + \tfrac{\al}{4}(a_{i+1} +a_{i+4} - \al c) \\
&\phantom{{}={}} + \tfrac{\al(4\al-1)}{4(2\al-1)}(a_{i+2} + a_{i+5} - \al c)
\end{align*}
and so $A_0(c)A_0(c) \subseteq A_0(c)$ as required.
\end{proof}

\begin{lemma}
$6\Y(\frac{1}{2}, 2)$ does not have property \textup{(J)}.
\end{lemma}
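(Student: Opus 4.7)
The plan is to invoke Lemma \ref{propertyJlem}. For $6\Y(\tfrac{1}{2},2)$ the axet is $X(6)$ with generators numbered as usual, so the opposite of $a_i$ in the axet is $a_{i+3}$. Using the defining relation $a_i = a_{i+3} + d$ from Table \ref{tab:multX6}, we immediately have $d = a_3 - a_0 \in \lla a_0, a_3\rra$, and then the product $a_0 d = \tfrac{1}{2}d + z$ forces $z \in \lla a_0, a_3\rra$. The remaining products ($d^2 = -2z$, $dz = z^2 = a_0 z = 0$, $a_0^2 = a_0$) show that $\{a_0, d, z\}$ is already closed under multiplication, so $\lla a_0, a_3\rra = \la a_0, d, z\ra$ is three-dimensional. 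The analogous computation gives $\lla a_2, a_5\rra = \la a_2, d, z\ra$ and $\lla a_4, a_1\rra = \la a_4, d, z\ra$.

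Since each of these subalgebras has dimension three, neither of the exceptional conclusions of Lemma \ref{propertyJlem} can occur (they require $\lla x, x'\rra$ to be isomorphic to $2\B$ or $S(2)^\circ$, both two-dimensional). Hence if property (J) held, the additional Jordan type axis $a$ would have to lie in
$$\bigcap_{x \in X}\lla x, x'\rra = \la a_0, d, z\ra \cap \la a_2, d, z\ra \cap \la a_4, d, z\ra.$$
Because $\{a_0, a_2, a_4, d, z\}$ is a basis of the algebra, this intersection equals exactly $\la d, z\ra$.

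It now suffices to show that $\la d, z\ra$ contains no non-zero idempotent. Using $d^2 = -2z$ and $dz = z^2 = 0$, we compute $(\lambda d + \mu z)^2 = -2\lambda^2 z$; comparing with $\lambda d + \mu z$ forces $\lambda = 0$ and then $\mu = 0$. So no Jordan axis lives in $\la d, z\ra$, contradicting property (J). The only non-routine step is recognising that Lemma \ref{propertyJlem} is the right tool; after that everything is a short calculation forced by Table \ref{tab:multX6}. The structural reason behind the failure is that the ``nil part'' $\la d, z\ra$ absorbs the obstruction to $a_0$ and $a_3$ generating a genuinely Jordan-type subalgebra, but being nil it cannot itself host an idempotent.
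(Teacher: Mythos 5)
Your proof is correct and follows essentially the same route as the paper: apply Lemma \ref{propertyJlem}, observe that each $\lla a_i, a_{i+3}\rra = \la a_i, d, z\ra$ (so the two-dimensional exceptional cases cannot occur), intersect to get $\la d, z\ra$, and check that this nil subspace contains no nonzero idempotent. The only cosmetic differences are that the paper identifies $\lla a_i,a_{i+3}\rra$ as $\Cl$ rather than arguing by dimension, and your $d = a_3 - a_0$ should be $d = a_0 - a_3$ (from $a_i = a_{i+3}+d$), which affects nothing since only the span matters.
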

\begin{proof}
Suppose, for a contradiction that it did have property (J) with respect to some axis $a$.  Then, by Lemma \ref{propertyJlem}, $a$ would be in each Jordan type $\al$ subalgebra $\lla a_i, a_{i+3} \rra$ for $i = 1,2,3$.  However, $a_{i+3} = a_i + d$ (see Table \ref{tab:multX6}) and $\lla a_i, a_{i+3} \rra = \la a_i, a_i+d, z \ra \cong \Cl$.  Looking again at Table \ref{tab:multX6}, $\lla a_0, a_3 \rra \cap \lla a_1, a_4 \rra \cap \lla a_2, a_5 \rra = \la d, z\ra$.  So, $a = \gm d + \dl z$ for some $\gm, \dl \in \FF$.  We have 
\[
a = a^2 = (\gm d + \dl z)^2 = \gm^2 d^2 = -2\gm^2z
\]
a contradiction since $a$ is idempotent and $z^2 = 0$.
\end{proof}

%%%%%

\subsection{Finite dimensional algebras on $X(\infty)$}\label{sec:3Y5Y}

There are two families of finite-dimensional algebra on Yabe's list that generically have the axet $X(\infty)$, namely $\IY_3(\al,\frac{1}{2}, \mu)$\footnote{Note that the parameter $\mu$ we use is different to Yabe's parameter.  Here $\IY_3(\al,\frac{1}{2}, \mu) \cong \mathrm{III}(\al, \frac{1}{2}, -2\mu-1)$.} and $\IY_5(\al, \frac{1}{2})$.  Since $\bt=\frac{1}{2}$ for both these algebras, we assume that $\ch(\FF) \neq 2$.  We must decide when these algebras have finite axets and when they have property (J).  We again use our own bases for these algebras, which will allow us to better exhibit their axial structure.

For $\IY_3(\al,\frac{1}{2}, \mu)$, we note that $\al \neq \bt = \frac{1}{2}$ (as well as $\al \neq 1,0$).  We split into three subcases: $\al = -1$, $\mu = 1$ and otherwise.  The last case is dealt with by considering split spin factor algebras.

\begin{definition}\cite{splitspin}
Let $E$ be a vector space with a symmetric bilinear form $b$ and $\al \in \FF$.  The \emph{split spin factor} algebra $S(b, \al)$ is the algebra on $E \oplus \FF z_1 \oplus \FF z_2$ with multiplication
\[
\begin{gathered}
z_1^2 = z_1, \quad z_2^2 = z_2, \quad z_1 z_2 = 0, \\
e z_1 = \al e, \quad e z_2 = (1-\al) e, \\
ef = -b(e,f)z,
\end{gathered}
\]
for all $e,f \in E$, where $z := \al(\al-2) z_1 + (\al-1)(\al+1)z_2$.
\end{definition}

From \cite{splitspin}, we have the following properties.  If $e \in E$ and $b(e,e) =1$, then $x := \frac{1}{2}(e + \al z_1 + (\al+1)z_2)$ is a (primitive) axis of Monster type $\cM(\al, \frac{1}{2})$ and these are all such axes (not counting $z_1$, which is of Jordan type $\al$).  Therefore, $S(b,\al)$ is an axial algebra of Monster type $\cM(\al, \frac{1}{2})$ if and only if $E$ is spanned by vectors of norm $1$.  Since we are interested in $2$-generated algebras, we take $E = \la e,f \ra$ to be $2$-dimensional.  Let $b(e,e) = b(f,f) = 1$ and $b(e,f)=\mu$, for some $\mu \in \FF$.  Define $x := \frac{1}{2}(e + \al z_1 + (\al+1)z_2)$ and $y := \frac{1}{2}(f + \al z_1 + (\al+1)z_2)$.

\begin{theorem}\textup{\cite[Theorem $5.1$]{splitspin}}
If $\al \neq -1$ and $\mu \neq 1$, then $\lla x,y \rra = S(b,\al) \cong \IY_3(\al, \frac{1}{2}, \mu)$.
\end{theorem}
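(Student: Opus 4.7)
The claim is quoted from \cite[Theorem~5.1]{splitspin}, and for a self-contained proof the plan splits into two steps: verify that $x$ and $y$ generate the whole of $S(b,\al)$, and then identify $S(b,\al)$ with $\IY_3(\al,\tfrac12,\mu)$ via Theorem~\ref{2genMonster}.

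For the first step, since $\dim E = 2$ we have $\dim S(b,\al) = 4$, so it suffices to produce four linearly independent elements of $\lla x, y\rra$. A direct expansion using the defining multiplication yields
\[
xy \;=\; \tfrac{1}{2}(x+y) + \tfrac{1-\mu}{4}\,z,
\]
where $z = \al(\al-2)z_1 + (\al^2-1)z_2$, so the hypothesis $\mu \neq 1$ places $z$ inside $\lla x, y\rra$. Next, a short eigenspace computation exhibits $f - \mu e$ in $A_{1/2}(x)$; since $\lla x, y\rra$ is a subalgebra containing $x$, it decomposes into $\ad_x$-eigenspaces and is therefore preserved by the Miyamoto involution $\tau_x$. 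Hence $\tau_x(y) - y = \mu e - f$ lies in $\lla x, y\rra$, and combined with $e-f = 2(x-y)$ the hypothesis $\mu\neq 1$ separates $e$ and $f$ individually. Finally $2x-e = \al z_1 + (\al+1)z_2$ is available, and paired with $z$ the separation determinant is $\al(\al+1)$, nonzero precisely because $\al \neq -1$ (with $\al \neq 0$ being standing). Hence $z_1, z_2 \in \lla x, y\rra$ and $\lla x, y\rra = S(b,\al)$.

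For the second step, $S(b, \al)$ is by construction a symmetric primitive $2$-generated axial algebra of Monster type $\cM(\al,\tfrac12)$ with generically infinite axet, so by Theorem~\ref{2genMonster} it must belong to Yabe's list in \cite[Table~2]{yabe}. Elimination against that list leaves only the family $\mathrm{III}(\al,\tfrac12,\dl)$, and the parameter $\dl = -2\mu - 1$ is determined by comparing the coefficient of $z$ in the product $xy$ computed above against Yabe's defining structure constants. The main obstacle in this plan is bookkeeping rather than novelty: several intermediate eigenspace computations rely on $\al$ avoiding a small set of exceptional values (e.g.\ $\al \notin \{1,2\}$) and on the characteristic being benign, but all such restrictions are absorbed into the standing hypotheses on the fusion law $\cM(\al,\tfrac12)$ together with the given conditions $\al\neq -1$ and $\mu \neq 1$.
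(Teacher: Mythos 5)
This statement is imported verbatim from \cite[Theorem 5.1]{splitspin}; the present paper gives no proof of it, so your attempt can only be judged against the cited source. Your first step (generation) is correct and complete: the computation $xy=\tfrac12(x+y)+\tfrac{1-\mu}{4}z$ checks out, $f-\mu e$ is indeed a $\tfrac12$-eigenvector of $\ad_x$, the subalgebra $\lla x,y\rra$ is $\tau_x$-invariant because it is a sum of $\ad_x$-eigenspaces, and the final $2\times 2$ determinant $\al(\al+1)$ correctly isolates where both hypotheses $\mu\neq 1$ and $\al\neq -1$ enter. This is essentially the argument one would find in \cite{splitspin}.

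The gap is in your second step. ``Elimination against that list leaves only the family $\mathrm{III}(\al,\tfrac12,\dl)$'' is asserted, not performed, and it is not a formality: several families in Theorem \ref{2genMonster} specialise to $\bt=\tfrac12$ (e.g.\ $6\A(\al,\frac{-\al^2}{4(2\al-1)})$ at $\al=-2\pm\sqrt6$, and for $\al=2$ the quotients of the Highwater algebra, where $\cH_3$ is also $4$-dimensional), so one must actually compare dimensions, axets and structure constants, and one must also rule out that $S(b,\al)$ is merely a \emph{proper quotient} of $\mathrm{III}(\al,\tfrac12,\dl)$ rather than the whole algebra. Likewise ``the parameter $\dl=-2\mu-1$ is determined by comparing the coefficient of $z$'' presupposes that you have already written $z$ in Yabe's basis, i.e.\ that you have the explicit isomorphism in hand --- which is precisely the content being proved. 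The source \cite{splitspin} settles this by exhibiting a direct change of basis to Yabe's presentation of $\mathrm{III}(\al,\tfrac12,\dl)$ and verifying the products, which avoids both the elimination and the quotient issue. Your route via the classification theorem is viable in principle (and not circular, since Yabe's classification is logically prior to \cite{splitspin}), but as written the identification step is a placeholder rather than a proof.
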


It is easy to see that $S(b, \al)$ has an identity given by $\1 = z_1+z_2$, so in particular, $\IY_3(\al, \frac{1}{2}, \mu)$ has an identity whenever $\al\neq -1$ and $\mu \neq 1$.  It turns out that when $\al = -1$, or $\mu = 1$, the algebra has no identity and instead has a nil element.  Also, in these two cases, $\lla x,y \rra$ is a proper subalgebra.  Indeed for $\al=-1$ and $\mu \neq 1$, $S(b,-1)^\circ := \lla x,y \rra$ has codimension $1$.  In \cite{splitspin}, we introduce a nil cover of this subalgebra.

\begin{definition}
Let $E$ be a vector space with a symmetric bilinear form $b$.  Let $\widehat{S}(b,-1)^\circ$ be the algebra on $E \oplus \FF z_1 \oplus \FF n$ with multiplication
\[
\begin{gathered}
z_1^2 = z_1, \quad n^2 = 0, \quad z_1 n = 0, \\
e z_1 =  -e, \quad e n = 0, \\
ef = -b(e,f)z,
\end{gathered}
\]
for all $e,f \in E$, where $z := 3 z_1 -2n$.
\end{definition}

The analogous results to the above also hold for $\widehat{S}(b,-1)^\circ$.  That is, if $b(e,e) =1$ then $x = \frac{1}{2}(e-z_1+n)$ is an axis of Monster type $\cM(-1,\frac{1}{2})$ and these are all such axes.  Hence, $\widehat{S}(b,-1)^\circ$ is an axial algebra of Monster type $\cM(-1,\frac{1}{2})$ if and only if $E$ is spanned by vectors of norm $1$.  Suppose $E = \la e, f \ra$ is as above with $b(e,e) = b(f,f) = 1$ and $b(e,f)=\mu$, $x := \frac{1}{2}(e - z_1 + n)$ and $y := \frac{1}{2}(f -z_1 + n)$.  By \cite[Theorem 6.9]{splitspin}, if $\mu \neq 1$, then $\lla x,y \rra = \widehat{S}(b,-1)^\circ \cong \IY_3(-1, \frac{1}{2}, \mu)$.

We may now consider the axets for both of these algebras simultaneously.  From \cite{splitspin}, the Miyamoto involution for $x$ is $\tau_x = -r_e$ (cf. Section \ref{sec:jordanaxet}).  Let $\theta$ be the flip automorphism which switches $x$ and $y$.  Note that it fixes $z_1$ and $z_2$ (or $n$) and so acts on $E$ by switching $e$ and $f$.  Define $\rho := \theta \tau_x$.  Recall from Subsection \ref{sec:jordanaxet}, the definition of $N(n)$ (note that our form here is scaled by a factor of $\frac{1}{2}$ compared to the form in Subsection \ref{sec:jordanaxet}; equivalently, $N(n)$ defined in this paper is scaled by a factor of 2 compared to \cite{splitspin}).

\begin{lemma}\textup{\cite[Lemma 5.3]{splitspin}}
$\IY_3(\al, \frac{1}{2}, \mu)$, $\mu \neq 1$, has axet $X(n)$, where $2\mu \in N(n)$.
\end{lemma}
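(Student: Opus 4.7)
The plan is to reduce the axet computation to the dihedral orbit problem already solved in Lemma \ref{quadaxet}, using the split spin factor realisation of $\IY_3(\al,\frac{1}{2},\mu)$ recalled just above the statement.

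First, I would invoke that realisation in both cases. For $\al \neq -1$, $\IY_3(\al,\frac{1}{2},\mu) \cong S(b,\al) = \lla x,y\rra$ on $E \oplus \FF z_1 \oplus \FF z_2$ with $E = \la e,f\ra$, $b(e,e)=b(f,f)=1$, $b(e,f)=\mu$ and $x = \tfrac{1}{2}(e + \al z_1 + (\al+1)z_2)$, $y = \tfrac{1}{2}(f + \al z_1 + (\al+1)z_2)$; for $\al = -1$ the analogous realisation inside $\widehat{S}(b,-1)^\circ$ applies. In both settings the Monster-type axes of the ambient algebra are in bijection with norm-$1$ vectors $e' \in E$ via $e' \mapsto \tfrac{1}{2}(e' + \al z_1 + (\al+1)z_2)$ (or the $\widehat{S}$-analogue), and the corresponding Miyamoto involution acts on $E$ as $-r_{e'}$.

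Next, I would observe that the axet of $\IY_3(\al,\frac{1}{2},\mu)$, being the closure of $\{x,y\}$ under $\Miy(\IY_3)$, corresponds under this bijection to the orbit $\Omega = e^D \cup f^D$, where $D = \la -r_e, -r_f\ra$ is exactly the dihedral subgroup of $\mathrm{GL}(E)$ treated in Subsection \ref{sec:jordanaxet}. Hence the size of the axet equals $|\Omega|$. To invoke Lemma \ref{quadaxet} I would rescale the form on $E$ by a factor of $2$ so that the generators have norm $2$ in the convention of that subsection; this leaves the reflections and therefore $D$ unchanged, but turns the pairing into $\delta = 2\mu$. Lemma \ref{quadaxet} then gives $|\Omega| = o(\rho)$ where $\rho$ is the rotation of trace $2\mu$, and the definition of $N(n)$ translates this directly into the equivalence $|\Omega| = n \iff 2\mu \in N(n)$.

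Finally, I would pin down the isomorphism type of the axet. Since there is a flip automorphism switching $x$ and $y$, $\IY_3(\al,\frac{1}{2},\mu)$ is symmetric, so its axet is regular in the sense of Definition \ref{def:skew} (in $X'(3k)$ the two Miyamoto orbits have different sizes $k$ and $2k$, which is incompatible with symmetry). Theorem \ref{2genaxet} therefore forces the axet to be $X(n)$ rather than $X'(n)$, concluding the proof.

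There is no substantive obstacle here: the only things to watch are the bookkeeping between the two realisations ($\al\neq -1$ versus $\al = -1$) and the factor-of-$2$ rescaling between the norm-$1$ convention of the split spin factor formulas and the norm-$2$ convention of Subsection \ref{sec:jordanaxet}. Once these are aligned, the conclusion is a direct translation of Lemma \ref{quadaxet}.
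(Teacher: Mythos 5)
Your proposal is correct and follows essentially the same route the paper takes (the lemma is cited from \cite{splitspin}, but the discussion immediately preceding it sets up exactly your reduction: $\tau_x$ acts on $E$ as $-r_e$, the flip switches $e$ and $f$, and $\rho = \theta\tau_x$ is the rotation whose order is computed by Lemma \ref{quadaxet}). You have also correctly handled the two points the paper flags, namely the $\al=-1$ realisation via $\widehat{S}(b,-1)^\circ$ and the factor-of-$2$ rescaling that turns $b(e,f)=\mu$ into $\delta=2\mu$.
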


We can now see that $\IY_3(\al, \frac{1}{2}, \mu)$, $\mu \neq 1$, has property (J).

\begin{lemma}
Let $n \geq 3$ odd and $2\mu \in N(2n)$.  Then $A = \IY_3(\al, \frac{1}{2}, \mu)$ has property \textup{(J)} with respect to $z_1$.
\end{lemma}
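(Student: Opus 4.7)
The plan is to apply Lemma \ref{propertyJlem} with the candidate additional Jordan type axis being $a := z_1$ itself. I need to verify three things: (1) $z_1$ is an axis of Jordan type $\al$; (2) $z_1$ is fixed by $\Miy(X)$; and (3) the Miyamoto automorphism $\sg_{z_1}$ swaps the two $\Miy(X)$-orbits on $X\cong X(2n)$.

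Point (1) is immediate from the defining multiplication of $S(b,\al)$ (or $\widehat{S}(b,-1)^\circ$ when $\al=-1$): $z_1$ is an idempotent whose adjoint has eigenspaces $A_1(z_1)=\la z_1\ra$, $A_0(z_1)=\la z_2\ra$ (respectively $\la n\ra$), and $A_\al(z_1)=E$; the Jordan fusion law $\cJ(\al)$ is then checked directly from the table. As a by-product, $\sg_{z_1}$ is the linear map that fixes $z_1$ and $z_2$ (or $n$) and negates $E$, and this is easily verified to be an algebra automorphism from the listed products. For point (2), recall from \cite{splitspin} that every axis has the form $x=\tfrac{1}{2}(e+\al z_1+(\al+1)z_2)$ for some norm-$1$ vector $e\in E$, and $A_{1/2}(x)\subseteq E$. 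Hence $z_1$ has no component in the $\tfrac{1}{2}$-eigenspace of $x$, which is exactly the graded part that $\tau_x$ negates under the $C_2$-grading of $\cM(\al,\tfrac{1}{2})$. Therefore $\tau_x(z_1)=z_1$ for every axis $x$, so $\Miy(X)$ fixes $z_1$.

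For point (3), which I expect to be the main obstacle, I will identify $\sg_{z_1}$ explicitly on axes. Applying $\sg_{z_1}$ to $x=\tfrac{1}{2}(e+\al z_1+(\al+1)z_2)$ gives $\tfrac{1}{2}(-e+\al z_1+(\al+1)z_2)$, which is the axis determined by the norm-$1$ vector $-e$. Since $-r_e=-r_{-e}$ as involutions on $E$, the axes $x$ and $\sg_{z_1}(x)$ share the same Miyamoto involution, so they are opposite vertices of the $2n$-gon in $X(2n)$. The key combinatorial fact I will invoke is that for $X(2n)$ with $n$ odd, $\Miy(X)\cong D_{2n}$ has exactly two orbits of length $n$, distinguished by the parity of the vertex index, and opposite vertices differ by $n$ positions, which flips the parity because $n$ is odd. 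Hence $\sg_{z_1}$ interchanges the two $\Miy(X)$-orbits, and property (J) holds. The case $\al=-1$ is handled identically using $\widehat{S}(b,-1)^\circ$ with $n$ replacing $z_2$ throughout; the parity-of-vertex-index argument is the essential ingredient that makes the odd hypothesis on $n$ indispensable.
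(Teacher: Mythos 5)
Your proof is correct and follows essentially the same route as the paper: identify $\sg_{z_1}$ as $-1$ on $E$, observe that it sends each axis $x$ to $x^-$, which is the unique other axis with the same Miyamoto involution and hence the opposite vertex of the $2n$-gon, and use the oddness of $n$ to conclude the two $\Miy(X)$-orbits are interchanged. One small imprecision: from $A_{1/2}(x)\subseteq E$ alone it does not formally follow that $z_1$ has zero component in $A_{1/2}(x)$; the clean justification (as in \cite{splitspin}, and used by the paper) is that $\tau_x=-r_e$ on $E$ and the identity on $\la z_1,z_2\ra$, so $z_1$ is fixed.
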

\begin{proof}
Since $x$ and $x^-$ have the same Miyamoto involution and are the only such axes, if $x \in X$ then $x^-$ is also in $X$ and moreover $x$ and $x^-$ lie in different orbits on $X \cong X(2n)$.  From \cite{splitspin}, $z_1$ is an axis of Jordan type $\al$.  Since $A_\al(z_1) = E$, $\tau_{z_1}$ maps $e$ to $-e$ and so maps $x$ to $x^-$, thus switching the two orbits.
\end{proof}

When $\mu = 1$, $\IY_3(\al, \frac{1}{2},1)$ has a different structure.  We define it with basis\footnote{Compared to Yabe's basis, we have $z := \frac{1}{2}(a_1+a_{-1}) -a_0 +\frac{1}{\al}q$ and $n := \frac{1}{2\al}q$.} $a_0, a_1, z, n$ with the following multiplication:
\[
\begin{gathered}
a_i^2 = a_i, \qquad a_0 a_1 = \tfrac{1}{2}(a_0+a_1) +(\al-\tfrac{1}{2})z +n, \\
 z^2 = 0 = n^2, \qquad a_iz = \al z, \qquad a_i n = 0 = zn.
\end{gathered}
\]
In $\IY_3(\al, \frac{1}{2},1)$, all axes have a common $\al$ eigenspace $\la z \ra$ and a common $0$-eigenspace $\la n \ra$.

\begin{lemma}
$\IY_3(\al, \frac{1}{2}, 1)$ has axet $X(\infty)$ if $\ch(\FF)=0$ and axet $X(p)$ if $\ch(\FF)=p$.
\end{lemma}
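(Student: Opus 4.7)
The plan is to compute an explicit parametrisation of the closure of $\{a_0, a_1\}$ under the Miyamoto group $\langle \tau_{a_0}, \tau_{a_1}\rangle$, and then read off its size according to the characteristic. Since the $1$-, $\alpha$-, and $0$-eigenspaces of $a_0$ are respectively $\la a_0\ra$, $\la z\ra$, and $\la n\ra$, the $\frac{1}{2}$-eigenspace is forced by dimension count to be $1$-dimensional. A straightforward linear calculation (solving $a_0 v = \tfrac{1}{2}v$ on the basis $a_0, a_1, z, n$) gives $A_{1/2}(a_0) = \la -a_0 + a_1 - z + 2n\ra$, and by symmetry $A_{1/2}(a_1) = \la a_0 - a_1 - z + 2n\ra$. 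This determines $\tau_{a_0}$ and $\tau_{a_1}$ completely on the $4$-dimensional algebra.

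I would then conjecture the closed-form expression
\[
a_k = (1-k) a_0 + k a_1 + k(k-1) z - 2k(k-1) n \quad (k \in \Z)
\]
and verify by direct computation that $\tau_{a_0}(a_k) = a_{-k}$ and $\tau_{a_1}(a_k) = a_{2-k}$. This is an easy expansion using the eigenspace data from the previous paragraph and the identities $(1+k)(1+k-1) = k(k+1)$ and $(2-k)(2-k-1) = (k-1)(k-2)$; the induction step is then automatic. Consequently $\rho = \tau_{a_0}\tau_{a_1}$ acts on indices by $k \mapsto k+2$, so the closure of $\{a_0, a_1\}$ under the Miyamoto group is exactly the set $\{a_k : k \in \Z\}$.

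It remains to determine when $a_k = a_\ell$. Comparing coefficients of $a_0$ (or equivalently of $a_1$) in the formula shows $a_k = a_\ell$ if and only if $k = \ell$ in $\FF$. In characteristic $0$, all $a_k$ are pairwise distinct, so the axet has infinitely many axes. Since the algebra is symmetric, Theorem~\ref{2genaxet} together with Definition~\ref{def:skew} forces the axet to be regular, hence of the form $X(n)$, and so here it is $X(\infty)$. In characteristic $p$ (necessarily odd, since $\bt = \tfrac12$ requires $\ch(\FF)\neq 2$), we get $a_k = a_\ell$ iff $k \equiv \ell \pmod p$, so there are exactly $p$ distinct axes and the axet is $X(p)$.

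The only real obstacle is guessing the correct polynomial formula for $a_k$; once that is in hand, the eigenspace computation and the inductive verification are routine, and the conclusion in each characteristic follows immediately from counting orbits under $\rho$.
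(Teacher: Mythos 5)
Your proof is correct, and it takes a genuinely different route from the paper's. You work entirely inside $\IY_3(\al,\frac{1}{2},1)$: you compute the $\frac{1}{2}$-eigenspaces (the vector $-a_0+a_1-z+2n$ is indeed the $\frac12$-eigenvector of $a_0$), write down the closed form $a_k=(1-k)a_0+ka_1+k(k-1)z-2k(k-1)n$, verify $\tau_{a_0}(a_k)=a_{-k}$ and $\tau_{a_1}(a_k)=a_{2-k}$, and read off that $a_k=a_\ell$ exactly when $k=\ell$ in $\FF$; I checked these computations against the multiplication table and they are right (your coefficient $k(k-1)$ on $z$ agrees with solving the idempotent equations directly). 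The paper argues instead by reduction: it notes that the difference of two distinct idempotents never lies in $\la z\ra$, so axes inject into the quotient $A/\la z\ra$, identifies that quotient as $\Cl$ via the classification of $2$-generated Jordan type $\frac12$ algebras (Theorem \ref{Jordanclassification} and Proposition \ref{spinsimple}), and then quotes Lemma \ref{Cl00axet} for the axet of $\Cl$. Your approach is more elementary and self-contained --- it needs neither the Jordan-type classification nor the earlier computation of the axet of $\Cl$ --- at the cost of guessing and verifying the explicit orbit formula; the paper's reduction is shorter given the machinery already established. One small point to make explicit in a final write-up: in characteristic $p$ you must rule out the skew possibility $X'(3)$ when $p=3$; your appeal to symmetry (symmetric algebras have regular axets) does this, and alternatively the observation that $k\mapsto k+2$ is transitive on $\Z/p\Z$ shows there is a single Miyamoto orbit, which is incompatible with $X'(3)$.
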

\begin{proof}
An easy calculation shows that any idempotent in $A = \IY_3(\al, \frac{1}{2}, 1)$ has the form $\lm a_0 + (1-\lm)a_1 + \lm(\lm-1)(2\al-1)z + 2\lm(1-\lm)n$, for $\lm \in \FF$.  In particular, the difference of two axes never lies in the ideal $\la z \ra$.  So axes in $A$ are in bijection with axes in the quotient $\bar{A} = A/\la z \ra$.  However, $\bar{A}$ is a $2$-generated axial algebra of Jordan type $\frac{1}{2}$ containing a nilpotent ideal $\la \bar{n} \ra$.  By Theorem \ref{Jordanclassification} and Proposition \ref{spinsimple}, the only possibilities are $S(-2)$ and $\Cl$.  However, $\bar n$ annihilates every element of $\bar{A}$ and so $\bar{A} \cong \Cl$.  Hence axet for $A$ is the same as the axet for $\bar{A} \cong \Cl$ and so the result follows from Lemma \ref{Cl00axet}.
\end{proof}

In particular, the axet for $\IY_3(\al, \frac{1}{2}, 1)$ is never even in size and so it cannot have property (J).

We now turn to the algebra $\IY_5(\al, \frac{1}{2})$.  From \cite{yabe}, this algebra has axial dimension $5$.  That is, the subspace spanned by the axes has dimension $5$ and in fact we have the relation $a_{i+5} -5a_{i+4} +10a_{i+3} -10a_{i+2} +5a_{i+1} -a_i =0$, for $i \in \Z$.  (Recall the notation $a_i$ from the beginning of Section \ref{sec:2genalbt}.)  Moreover, any five consecutive axes are a basis for this subspace and so we may express $a_n$ as a linear combination of, say, $a_0, \dots, a_4$.

Before we prove this, we need a technical lemma.

\begin{lemma}\label{techpoly}
If $P(t) \in \FF[t]$ is a polynomial of degree strictly less than $k$, then $\sum_{t =0}^k (-1)^t\binom{k}{t}P(t) = 0$.
\end{lemma}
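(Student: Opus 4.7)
The plan is a standard finite-differences / linearity argument. Since both sides of the asserted identity are $\FF$-linear in $P$, I would first reduce to verifying the identity on any basis of the $k$-dimensional space of polynomials of degree strictly less than $k$. The binomial basis $\{\binom{t}{j} : 0 \le j < k\}$ is the most convenient, since it meshes well with $\binom{k}{t}$.

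For each fixed $j$ with $0 \le j < k$, I would evaluate $\sum_{t=0}^{k}(-1)^t\binom{k}{t}\binom{t}{j}$ via the absorption identity $\binom{k}{t}\binom{t}{j} = \binom{k}{j}\binom{k-j}{t-j}$. After reindexing by $s = t - j$ (terms with $t<j$ vanish), the sum becomes
\[
\binom{k}{j}(-1)^j \sum_{s=0}^{k-j} (-1)^s \binom{k-j}{s} = \binom{k}{j}(-1)^j (1-1)^{k-j},
\]
which is $0$ because $k - j \ge 1$. This disposes of every basis element and so, by linearity, of every $P$ of degree $<k$.

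An equivalent, perhaps more conceptual route is via the forward-difference operator $\Delta P(t) := P(t+1) - P(t)$, which strictly lowers the degree of every nonconstant polynomial; hence $\Delta^k P \equiv 0$ whenever $\deg P < k$. A short induction on $k$ (or a direct expansion of $\Delta^k = (E - \mathrm{id})^k$, where $E$ is the shift $P(t)\mapsto P(t+1)$) gives $(\Delta^k P)(0) = \sum_{t=0}^k (-1)^{k-t}\binom{k}{t} P(t)$, and multiplying through by $(-1)^k$ yields the claim. This is a classical elementary fact and I anticipate no genuine obstacle; the only care needed is to state the linearity reduction and the choice of basis cleanly.
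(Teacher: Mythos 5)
Your proposal is correct and is essentially the paper's argument: both reduce by linearity to the falling-factorial basis and then kill each basis element with a binomial identity. The paper uses the monic falling factorials $Q_s(t)=t(t-1)\cdots(t-s+1)$ and evaluates the sum by differentiating $(1+x)^k$ $s$ times and setting $x=-1$, whereas you use the normalised versions $\binom{t}{j}=Q_j(t)/j!$ together with the absorption identity and $(1-1)^{k-j}=0$; these are two renderings of the same computation. The only point worth flagging is that your first route, as stated, quietly assumes $j!$ is invertible in $\FF$ for all $j<k$ (otherwise $\binom{t}{j}$ is not a polynomial over $\FF$ and your ``basis'' degenerates), so in characteristic $p\le k-1$ you should either pass to the monic $Q_j$ and the integral identity $\binom{k}{t}Q_j(t)=Q_j(k)\binom{k-j}{t-j}$, or simply invoke your second, finite-difference argument, which is characteristic-free. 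For the paper's application ($k=5$ over fields of characteristic $0$ or $\ge 5$) this is not an issue.
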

\begin{proof}
Since the polynomials $Q_s(t) := t(t-1)\dots (t-s+1)$, for $0 \leq s <k$, form a basis for the space of polynomials of degree at most $k$, it suffices to prove this for each $Q_s(t)$.  Consider $(1+x)^k = \sum_{t=0}^k {k \choose t} x^t$ and differentiate both sides $0 \leq s < k$ times with respect to $x$.  By setting $x=-1$, we obtain $0 = \sum_{t =0}^k (-1)^{t-s} {k \choose t} t(t-1) \dots (t-s+1) = (-1)^{-s} \sum_{t =0}^k (-1)^t {k \choose t} Q_s(t)$ and hence the claim is shown.
\end{proof}

We can now express $a_n$ as a linear combination of $a_0, \dots, a_4$.

\begin{lemma}\label{5Yaxes}
In $\IY_5(\al, \frac{1}{2})$, for $n \in \Z$ and $\ch(\FF) \neq 3$, we have
\[
a_n =  \sum_{i=0}^4  \frac{(-1)^i }{i! (4-i)!} \frac{n^{\underline{5}}}{(n-i)} a_i
\]
where $n^{\underline{5}} := n(n-1)\dots(n-4)$.\footnote{Note that we cancel $n-i$ in the denominator with a term in $n^{\underline{5}}$ before evaluating this formula and so every term makes sense provided $\ch(\FF) \neq 3$.}
\end{lemma}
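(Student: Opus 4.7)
The plan is to recognise the recurrence $a_{i+5} - 5a_{i+4} + 10a_{i+3} - 10a_{i+2} + 5a_{i+1} - a_i = 0$ as saying that the fifth forward difference $\Delta^5 a_n$ vanishes (where $\Delta a_n := a_{n+1} - a_n$), so the sequence $(a_n)$ behaves like a polynomial of degree at most $4$ in $n$ with values in $\IY_5(\al, \tfrac{1}{2})$. The displayed formula is then nothing but Lagrange interpolation through the nodes $0, 1, 2, 3, 4$, and I would prove it by checking that its right-hand side (i) agrees with $a_n$ at those five values of $n$ and (ii) satisfies the same recurrence, so that the induction in both directions forces equality.

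Concretely, I would introduce the standard Lagrange basis
\[
P_i(t) := \prod_{\substack{0 \le j \le 4 \\ j \ne i}} \frac{t-j}{i-j} = \frac{(-1)^i}{i!(4-i)!} \prod_{j \ne i} (t-j), \qquad 0 \le i \le 4,
\]
using $\prod_{j \neq i}(i-j) = (-1)^{4-i} i!(4-i)! = (-1)^i i!(4-i)!$ in the second equality. Each $P_i$ is an honest polynomial in $t$ of degree $4$, and its product form $\prod_{j \neq i}(t-j)$ is exactly the formal quotient $n^{\underline{5}}/(n-i)$ appearing in the lemma, so $P_i(n)$ is precisely the coefficient of $a_i$ on the right-hand side. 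The hypothesis $\ch(\FF) \neq 3$ enters only to invert the denominators $i!(4-i)! \in \{4, 6, 24\}$; the prime $2$ is already invertible since $\bt = \tfrac{1}{2}$ forces $\ch(\FF) \neq 2$.

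Now set $b_n := \sum_{i=0}^4 P_i(n) a_i$. The defining Lagrange property $P_i(k) = \dl_{ik}$ for $k \in \{0, 1, 2, 3, 4\}$ gives $b_k = a_k$ for those five values. Because each $P_i$ has degree at most $4$, Lemma \ref{techpoly} applied with $k=5$ to the polynomial $t \mapsto P_i(n+t)$ yields $\Delta^5 P_i(n) = 0$ for every $n$, and hence $\Delta^5 b_n = \sum_i (\Delta^5 P_i(n))\,a_i = 0$. Thus $b_n$ satisfies the same linear recurrence as $a_n$; since that recurrence is monic in $a_{n+5}$ (resp.\ in $a_n$), any five consecutive values determine a solution uniquely going forward and backward, so induction in both directions from $n = 0, \dots, 4$ gives $a_n = b_n$ for every $n \in \Z$.

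I do not anticipate any serious obstacle: the only mildly subtle point is interpreting $n^{\underline{5}}/(n-i)$ as a polynomial identity rather than a genuine division, which the product form $\prod_{j \ne i}(n-j)$ handles cleanly, and the characteristic hypothesis enters only at the expected place of inverting $i!(4-i)!$.
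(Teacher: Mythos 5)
Your proof is correct and is essentially the paper's argument in a cleaner wrapper: the paper also verifies the base cases $n=0,\dots,4$ and then uses Lemma \ref{techpoly} together with the recurrence $a_{n+1}=\sum_{j=0}^4(-1)^j\binom{5}{j}a_{n-4+j}$ to push the formula forward by induction, which is exactly your observation that the coefficients are the Lagrange basis polynomials $P_i(n)$ and that both sides of the identity satisfy the same order-five recurrence with vanishing fifth difference. The only difference is presentational — you phrase the inductive step as uniqueness of solutions to a recurrence that is invertible in both directions, while the paper carries out the coefficient manipulation explicitly — so there is nothing to correct.
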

\begin{proof}
We prove this by induction on $n$.  One can easily check that this holds for the base cases $n = 0, \dots, 4$.  We will write $f_i(n) := \frac{(-1)^i }{i! (4-i)!} \frac{n^{\underline{5}}}{(n-i)}$ so that we need to prove that $a_n = \sum_{i = 0}^4 f_i(n) a_i$.  Using the recurrence relation for $a_{n+1}$ we have
\begin{align*}
a_{n+1} &= 5a_{n} - 10 a_{n-1} + 10a_{n-2} -5 a_{n-3} + a_{n-4} \\
&= \sum_{j=0}^4 (-1)^j{5 \choose j} a_{n-4+j} \\
&= \sum_{j=0}^4 (-1)^j{5 \choose j} \sum_{i = 0}^4 f_i(n-4+j) a_i\\
&= \sum_{i = 0}^4 \left(\sum_{j=0}^4 (-1)^j{5 \choose j} f_i(n-4+j) \right)a_i
\end{align*}
Now observe that all of our $f_i(n-4+j)$ are polynomials in $j$ of degree strictly less than $5$.  By Lemma \ref{techpoly}, we have that 
\[
\sum_{j=0}^4 (-1)^j{5 \choose j} f_i(n-4+j) = (-1)^5 {5 \choose 5} f_i(n-4+5) = f_i(n+1)
\]
and hence the result follows by induction.
\end{proof}

\begin{corollary}\label{5Yaxet}
The algebra $\IY_5(\al, \frac{1}{2})$ has axet $X(p)$ over a field of characteristic $p \geq 5$, $X(9)$ over a field of characteristic $3$ and axet $X(\infty)$ otherwise.
\end{corollary}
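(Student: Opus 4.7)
The plan is to determine the smallest positive integer $n$ for which $a_n = a_0$; the axet is then $X(n)$, or $X(\infty)$ if no such $n$ exists. Since the defining recurrence $a_{n+5}-5a_{n+4}+10a_{n+3}-10a_{n+2}+5a_{n+1}-a_n=0$ has characteristic polynomial $(x-1)^5$, it is equivalent to $\Delta^5 a_n=0$, where $\Delta$ denotes the forward difference operator $\Delta a_n := a_{n+1}-a_n$. Newton's forward difference formula then gives
\[
a_n \;=\; \sum_{j=0}^4 \binom{n}{j}\, \Delta^j a_0,
\]
and, being a formal identity over $\Z$ (both sides satisfy the same initial values $a_0,\dots,a_4$ and the same recurrence $\Delta^5=0$), it remains valid in any characteristic once each $\binom{n}{j}$ is interpreted as its image in $\FF$.

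Since the algebra has axial dimension $5$, the axes $a_0,\dots,a_4$ form a basis of their span; and the expansion $\Delta^j a_0 = \sum_{k=0}^j (-1)^{j-k}\binom{j}{k}a_k$ is a unitriangular change of basis, so $\Delta^0 a_0,\dots,\Delta^4 a_0$ are also linearly independent in $A$. Hence $a_n = a_0$ holds precisely when $\binom{n}{j} \equiv 0$ in $\FF$ for each $j = 1,2,3,4$, so determining the axet reduces to finding the smallest positive $n$ satisfying these four simultaneous congruences modulo $\ch(\FF)$, which is a Lucas-theorem computation.

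The final step is a short case analysis. If $\ch(\FF)=0$, then $\binom{n}{1}=n\neq 0$ for $n>0$, so no periodicity occurs and the axet is $X(\infty)$. If $\ch(\FF)=p\geq 5$, Lucas gives $\binom{n}{j}\equiv\binom{n_0}{j}\pmod p$ for $j<p$, where $n_0$ denotes the least base-$p$ digit of $n$; the four vanishing conditions collapse to $n_0=0$, i.e.\ $p\mid n$, and the minimum is $n=p$. If $\ch(\FF)=3$, the conditions at $j=1,2$ force $n_0=0$, the condition at $j=3$ (using $3=(10)_3$) forces $n_1=0$, and the condition at $j=4$ (using $4=(11)_3$) is then automatic; hence $9\mid n$ and the minimum is $n=9$. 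Characteristic $2$ is excluded from the outset since $\bt=\tfrac{1}{2}$.

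The main obstacle is the uniform treatment of all characteristics. Lemma \ref{5Yaxes} explicitly excludes characteristic $3$ because the denominators $i!(4-i)!$ acquire factors of $3$, so the explicit formula there cannot be substituted directly. Reformulating via integer-valued binomial coefficients and Newton's forward difference formula sidesteps this obstruction entirely and handles all relevant characteristics in a single stroke.
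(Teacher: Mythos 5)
Your proof is correct, and it takes a genuinely different route from the paper's. The paper first proves (Lemma \ref{5Yaxes}, via the technical Lemma \ref{techpoly} and an induction on $n$) the Lagrange-interpolation expression $a_n = \sum_{i=0}^4 \tfrac{(-1)^i}{i!(4-i)!}\tfrac{n^{\underline{5}}}{(n-i)}a_i$, which is only valid when $\ch(\FF)\neq 3$ because the denominators $i!(4-i)!$ contain factors of $3$; it then reads the conditions for $a_n=a_0$ off these coefficients, and is forced to handle characteristic $3$ by a separate ad hoc computation with the relations $r_i$ (exhibiting $a_9-a_0$ as $r_4+5r_3+15r_2+5r_1+r_0$). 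You instead observe that the recurrence is exactly $\Delta^5=0$ and use the Newton forward-difference form $a_n=\sum_{j=0}^4\binom{n}{j}\Delta^j a_0$, whose coefficients are integers, so the identity reduces into $\FF$ in every characteristic; together with the unitriangularity of the passage from $(a_k)_{k=0}^4$ to $(\Delta^j a_0)_{j=0}^4$ and Lucas' theorem, the condition $a_n=a_0$ becomes $\binom{n}{j}\equiv 0$ for $j=1,\dots,4$, and all three cases (including characteristic $3$, where the conditions at $j=1$ and $j=3$ force $9\mid n$) follow from one digit computation. Your argument is more uniform and shorter, dispensing with both the induction and the special-casing of characteristic $3$; what the paper's version buys is the explicit closed-form coordinates of $a_n$ in the basis $a_0,\dots,a_4$, which have some independent interest, but for the corollary itself your approach is cleaner.
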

\begin{proof}
Since $A = \IY_5(\al, \frac{1}{2})$ is symmetric, it has a regular axet.  So $A$ has finite axet $X(n)$ if and only if $n > 0$ is minimal such that $a_n = a_0$ (see the discussion of the notation $a_i$ at the beginning of this section).  If $\ch(\FF) \neq 3$, by Lemma \ref{5Yaxes}, we can express $a_n$ in the basis $a_0, \dots, a_4$ as $a_n =  \sum_{i=0}^4  \frac{(-1)^i }{i! (4-i)!} \frac{n^{\underline{5}}}{(n-i)} a_i$.  Hence $a_0 = a_n$ if and only if $1 = \frac{1}{4!}(n-1)(n-2)(n-3)(n-4)$ and $0 = \frac{(-1)^i}{i!(4-1)!} \frac{n^{\underline{5}}}{(n-i)}$ for $i = 1, \dots, 4$.  First observe that the latter equations are never simultaneously satisfied when $\ch(\FF) = 0$.  So we now assume that $\ch(\FF) = p \geq 5$.  From the first equation we see that $p$ does not divide $(n-1)(n-2)(n-3)(n-4)$.  Hence from the latter equations, we see that $p|n$ is the only solution.  Since $n$ is the minimal such integer, we have $n=p$ and axet $X(p)$ in positive characteristic $p > 3$.

For $\ch(\FF)=3$, using the relation $r_i := a_{i+5} -5a_{i+4} +10a_{i+3} -10a_{i+2} +5a_{i+1} -a_i$ we see by inspection that $a_k \neq 0$, for $k = 5, \dots, 8$.  Whereas $0 = r_4 + 5r_3 + 15r_2 + 5r_1 + r_0 = a_9 -30 a_6 +81 a_5 -81 a_4 +30 a_3 -a_0 = a_9 -a_0$ and hence we get axet $X(9)$.
\end{proof}

In particular, $\IY_5(\al, \frac{1}{2})$ never has an axet of even size and hence it never has property (J).

\subsection{The Highwater algebra and its characteristic 5 cover}\label{sec:HW}

The Highwater algebra $\cH$ was introduced by Franchi, Mainardis and Shpectorov in \cite{highwater} and also discovered independently by Yabe in \cite{yabe}.  It is an infinite dimensional $2$-generated symmetric $\cM(2, \frac{1}{2})$-axial algebra over any field of characteristic not $2$, or $3$.  Its axet is $X(\infty)$.  In characteristic $5$, the Highwater algebra has a cover $\hatH$, introduced by Franchi and Mainardis \cite{highwater5}, which is also an $\cM(2, \frac{1}{2})$-axial algebra.

The algebras $\cH$ and $\hatH$ have many ideals and hence many quotients, including ones with a finite axet.  In this subsection, we consider two questions:
\begin{enumerate}
\item which quotients of $\cH$ and $\hatH$ have a finite odd axet $X(n)$;
\item which quotients of $\cH$ and $\hatH$ have axet $X(2n)$, with $n$ odd, and property (J).
\end{enumerate}

In \cite{HWquo}, Franchi, Mainardis and M\textsuperscript{c}Inroy classify the ideals of $\cH$ and $\hatH$.  In particular, they show that, for every $n \in \N$, there is a universal (largest) quotient $\cH_n$ of $\cH$ (and $\hatH_n$ of $\hatH$) which has axet $X(n)$.  The quotient $\cH_n$ has dimension $n + \lfloor \frac{n}{2} \rfloor$.  In characteristic $5$, $\hatH_n$ coincides with $\cH_n$ unless $3|n$, in which case it has dimension $n + \lfloor \frac{n}{2} \rfloor + 2 \lfloor \frac{n}{6} \rfloor$.  This answers the first question.  In the remainder of the section we focus on the second question and show the following.

\begin{proposition}\label{HWpropertyJ}
Suppose $n \in \N$ is odd.
\begin{enumerate}
\item There is a universal quotient $\cH_{2n}^J$ of $\cH_{2n}$ which has axet $X(2n)$ and property \textup{(}J\textup{)}.  It has dimension $n+\lceil \frac{n}{2} \rceil+1$.
\item Similarly, in characteristic $5$, there exists a universal quotient $\hatH_{2n}^J$ of $\hatH_{2n}$ which has axet $X(2n)$ and property \textup{(}J\textup{)}.  Furthermore, $\hatH_{2n}^J = \cH_{2n}^J$ if $3 \nmid n$ and it has dimension $n+\lceil \frac{n}{2} \rceil+ 2\lceil \frac{n}{6} \rceil + 1$ if $3|n$.
\end{enumerate}
\end{proposition}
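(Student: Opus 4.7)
The plan is to exploit Lemma \ref{propertyJlem} together with the ideal classification for $\cH$ and $\hatH$ from \cite{HWquo}. Since here $\al=2\neq\frac{1}{2}$, the exceptional situations (1) and (2) of Lemma \ref{propertyJlem} cannot arise, so any Jordan-type axis $a$ witnessing property (J) for a quotient $A$ of $\cH_{2n}$ must lie in
\[
W:=\bigcap_{i=0}^{2n-1}\lla a_i,a_{i+n}\rra_A.
\]
Here the opposite axis of $a_i$ in the axet $X(2n)$ is $a_{i+n}$, and since $n$ is odd, $a_i$ and $a_{i+n}$ lie in different $\Miy(X)$-orbits; the $2n$ pairs $\{a_i,a_{i+n}\}$ form a single $\Miy(X)$-orbit, so the subalgebras $\lla a_i,a_{i+n}\rra$ are all $\Miy(X)$-conjugate and $W$ is $\Miy(X)$-invariant.

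I would begin by identifying $\lla a_0,a_n\rra$ inside $\cH$ using the explicit multiplication of the Highwater algebra from \cite{highwater}; this is a specific symmetric $2$-generated subalgebra that can be read off from the classification in Theorem \ref{2genMonster}. From this description one can write down $W$ explicitly in the reduction mod the ideal defining $\cH_{2n}$, by averaging suitable elements over the Miyamoto action and checking membership in each pairwise subalgebra. Then I would look for an idempotent $a\in W$ satisfying the Jordan law $\cJ(2)$, fixed by $\Miy(X)$, and such that $\sg_a$ is the outer dihedral reflection interchanging the two orbits of $X(2n)$. A candidate will only be consistent modulo a certain quotient, and one lets $I\unlhd\cH_{2n}$ be the smallest ideal such that in $\cH_{2n}/I$ the element $a$ becomes an honest Jordan axis with the required $\sg_a$. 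Setting $\cH_{2n}^J:=\cH_{2n}/I$ and using the ideal classification in \cite[Theorem~\ref{HWideals}]{HWquo} (which enumerates the ideals of $\cH_{2n}$) pins down $I$, and a direct count of surviving basis vectors yields $\dim\cH_{2n}^J=n+\lceil n/2\rceil+1$. The universality of $\cH_{2n}^J$ is then immediate from its construction.

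The characteristic $5$ case is handled by the same strategy applied to $\hatH_{2n}$, using the description of $\hatH$ from \cite{highwater5} and the extension of the ideal classification to $\hatH$ in \cite{HWquo}. When $3\nmid n$ the cover $\hatH_{2n}$ coincides with $\cH_{2n}$, so $\hatH_{2n}^J=\cH_{2n}^J$ automatically; when $3\mid n$ the extra $2\lfloor n/3\rfloor$-dimensional module by which $\hatH_{2n}$ extends $\cH_{2n}$ splits into pieces, precisely $2\lceil n/6\rceil$ of which survive the further quotient to $\hatH_{2n}^J$ (the ones compatible with the $\sg_a$-action), yielding the dimension $n+\lceil n/2\rceil+2\lceil n/6\rceil+1$. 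The main obstacle will be identifying $W$ explicitly and verifying that the required Jordan axis $a$ exists exactly in the quotient of the claimed dimension; this is a concrete but delicate calculation inside $\cH$ (resp.\ $\hatH$), made tractable by the $\Miy(X)$-symmetry and by working in the adapted basis of \cite{HWquo}.
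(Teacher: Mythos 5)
Your reduction is the same one the paper uses: since $\al=2\neq\frac{1}{2}$, Lemma \ref{propertyJlem} forces any witnessing axis into $\bigcap_i\lla a_i,a_{i+n}\rra$, and the problem becomes one of constructing the right quotient. But from that point on your plan leaves the actual content of the proposition unproved, and in one place points at the wrong tool. First, you do not need to ``search'' for an idempotent in $W$ nor invoke an enumeration of ideals to pin down $I$: each $\lla a_i,a_{i+n}\rra$ has trivial Miyamoto group (opposite axes have equal $\tau$'s), hence is a $2$-generated Jordan type $2$ algebra, hence is $3\C(2)$; and since $3\C(2)$ has exactly one further Jordan type $2$ axis, namely $b_i:=a_i+a_{i+n}-a_ia_{i+n}$, the witnessing axis in any quotient with property (J) \emph{must} be the common image of all the $b_i$. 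So the only candidate ideal is $I=(b_0-b_1)$ (Miyamoto-invariance gives the rest), which also settles universality. Note that Theorem \ref{HWideals} does not enumerate the ideals of $\cH_{2n}$, so your proposed route for ``pinning down $I$'' is not available; the paper instead computes a spanning set for $I$ by multiplying $b_0-b_i$ with basis elements.

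The genuine gaps are the existence claims that your proposal defers. (i) You must show $I$ is a \emph{proper} ideal — otherwise $\cH_{2n}^J=0$ and the proposition fails; this is done by exhibiting an explicit basis of $I$ (elements $a_0+a_n-(a_i+a_{i+n})-2(z_{\0,n}-z_{\ii,n})$ and $s_j+s_{n-j}-s_n$, plus $p$-terms when $3\mid n$) and checking independence against the known basis of $\hatH_{2n}$. (ii) You must verify that the image $a$ of $b_0$ in $\hatH_{2n}/I$ really is a semisimple Jordan type $2$ axis: that $as_j$ (and $ap_{\r,j}$) vanish modulo $I$ is a nontrivial computation which in characteristic $5$ only works because of cancellations like $-\frac{15}{8}z_{\0,n}=0$, and one must then check the $C_2$-grading and $0\star 0\subseteq\{0\}$ to get Jordan type. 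Without this, nothing guarantees a quotient with property (J) exists at all. (iii) The map $\sg_a$ is the central involution $a_i\mapsto a_{i+n}$ induced by $(\tau_0\tau_{\frac{1}{2}})^n$, not an ``outer dihedral reflection'': since $a$ is $\Miy(X)$-fixed, $\sg_a$ must centralise $\Miy(X)$, which a reflection does not. (iv) The dimension formulas, in particular the count $2\lceil\frac{n}{6}\rceil$ of surviving $p$-vectors when $3\mid n$, are asserted rather than derived; they fall out only once the basis of $I$ in (i) is in hand.
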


In \cite{HWquo}, Franchi, Mainardis and M\textsuperscript{c}Inroy define a cover $\hatH$ for $\cH$ in all characteristics.  It contains an ideal $J$ such that $\hatH/J$ is isomorphic to the Highwater algebra.  In characteristic $5$, $\hatH$ coincides with Franchi and Mainardis's cover of the Highwater algebra.  In other characteristics, $\hatH$ is a symmetric $2$-generated axial algebra, but for a larger fusion law.  However, it can be used to prove statements about both $\cH$ and $\hatH$ in a unified way.  This is precisely the approach taken by Franchi, Mainardis and M\textsuperscript{c}Inroy in \cite{HWquo}.  We will follow a similar approach here, giving a unified description of the quotients which have property (J).  Hence we will be working in a factor algebra of $\hatH_{2n}$ which is an algebra of Monster type $(2, \frac{1}{2})$.

For $r \in \Z$, we write $\r$ for its image in $\Z_3 = \Z/3\Z$.

\begin{definition}\cite{HWquo}
Let $\hatH$ be the algebra over a field $\FF$, where $\ch(\FF) \neq 2,3$, with basis
\[
\{ a_i : i \in \Z \} \cup \{ s_j : j \in \N \} \cup \{ p_{\r,k} : \r \in \{ \overline{1}, \2 \}, k \in 3\N \}
\]
Set $s_0 = 0$, $p_{\r,j} = 0$ for all $\r \in \Z_3$, if $j \notin 3\N$, $p_{\0,j} := -p_{\overline{1},j} - p_{\2,j}$ and $z_{\r,j} = p_{\r+\overline{1},j} - p_{\r-\overline{1},j}$.  We define multiplication on the basis of $\hatH$ by
\begin{enumerate}
\item $a_i a_j := \frac{1}{2}(a_i+a_j) +s_{|i-j|}+z_{\ii,|i-j|}$
\item $a_i  s_{j} := -\frac{3}{4} a_i + \frac{3}{8}( a_{i-j}+ a_{i+j}) +\frac{3}{2} s_{j} -z_{\ii,j}$
\item $a_i p_{\r,j}:=\frac{3}{2}p_{\r,j} - p_{-(\ii+\r),j} $
\item $s_j  s_ l:= \frac{3}{4}( s_{j}+ s_{l}) - \frac{3}{8}(s_{|j-l|} + s_{j+l})$
\item $s_j p_{\r,k}:=\frac{3}{4}( p_{\r, j}+ p_{\r, k}) - \frac{3}{8}(p_{\r, |j-k|} + p_{\r, j+k})$
\item $
 p_{\r,h} p_{\t,k}:= \frac{1}{4}(z_{-(\r+\t), h}+ z_{-(\r+\t),  k})
- \frac{1}{8}(z_{-(\r+\t), |h-k|}+z_{-(\r+\t), h+k})
 $
\end{enumerate}
where $i \in \Z$, $j,l \in \N$, $h,k \in 3\N$ and $\r,\t \in \Z_3$.
\end{definition}

Naturally, the $a_i$ are the axes of this algebra and they form the axet $X(\infty)$.  Note that, $J = \la  p_{\r,j} : \r \in \Z_3, j \in 3\N \ra$ and $\cH \cong \hatH/J$.  So the images of the $p_{\r,j}$ and $z_{\r,j}$ are all $0$ in $\cH$, which greatly simplifies the above definition.

From \cite{HWquo}, $\Aut(\hatH) \cong D_\infty$.  Indeed, for $k \in \frac{1}{2}\Z$, let $\tau_k$ be the reflection given by $i \mapsto 2k-i$ and $D = \la \tau_0, \tau_{\frac{1}{2}} \ra$.  Let $\sgn \colon D \to \Z$ be the sign representation of $D$ and so $\sgn(\rho) = -1$ if $\rho$ is a reflection and $\sgn(\rho)=1$ if $\rho$ is a translation.  Then, for $g \in D = \Aut(\hatH)$,
\[
{a_i}^g = a_{i^g}, \quad {s_j}^g = s_j, \quad {p_{\r,k}}^g = \sgn(g) p_{\overline{r^g},k}
\]
Moreover, the Miyamoto involution $\tau_{a_j}$ coincides with $\tau_j$ and the flip automorphism $f$ is equal to $\tau_{\frac{1}{2}}$.

\begin{theorem}\textup{\cite[Corollary 10.1]{HWquo}}\label{HWideals}
For $n \in N$, let $I_n \unlhd \hatH$ be the ideal generated by $a_0-a_n$ and define $\hatH_n := \hatH/I_n$ and $\cH_n = \hatH/JI_n$.  
Moreover, let $B$ be the set of the following elements:
\begin{align*}
& a_i - a_{i+n}, & \mbox{for } i \in \Z, \\
& s_{j} - s_{j+n}, \  s_{jn}, & \mbox{for } j \in \N, \\
& s_{j} - s_{n-j}, & \mbox{for } 1 \leq j \leq \left\lfloor \tfrac{n}{2} \right\rfloor.
\end{align*}

\begin{enumerate}
\item $\cH_n$ and $\hatH_n$ are $2$-generated symmetric axial algebras with axet $X(n)$.  Moreover, every quotient of $\cH$, respectively $\hatH$, with axet $X(n)$ is a quotient of $\cH_n$, respectively $\hatH_n$.
\item If $3 \nmid n$, then
\begin{enumerate}
\item $I_n$ is spanned by the union of $B$ and a basis for $J$.
\item $\hatH_n = \cH_n$ has a basis given by the images of $\{ a_i : 0 \leq i <n \} \cup \{ s_j : 1 \leq j \leq \lfloor \frac{n}{2}\rfloor \}$.
\end{enumerate}
\item If $3|n$, then 
\begin{enumerate}
\item $I_n$ is spanned by the union of $B$ and the set of elements
\begin{align*}
& p_{\r, j} - p_{\r,j+n}, \  p_{\r, jn}, & \mbox{for } j \in \N, r=1,2, \\
& p_{\r,j} - p_{\r,n-j}, & \mbox{for } 1 \leq j \leq \left\lfloor \tfrac{n}{2} \right\rfloor,  r=1,2.
\end{align*}
\item $\hatH_n$ has a basis given by the images of $\{ a_i : 0 \leq i <n \} \cup \{ s_j : 1 \leq j \leq \lfloor \frac{n}{2}\rfloor \} \cup \{ p_{\overline{1},j}, p_{\2,j} : j \in 3\Z, 1 \leq j \leq \lfloor \frac{n}{2}\rfloor \}$.
\end{enumerate}
\end{enumerate}
\end{theorem}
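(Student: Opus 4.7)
The plan is to construct $\cH_{2n}^J$ as the universal quotient of $\cH_{2n}$ carrying an additional axis $a$ of Jordan type $2$, fixed by $\Miy(X)=D_{2n}$ and with $\sg_a$ swapping the two orbits. By Lemma \ref{propertyJlem} (applied with $\al=2$) such an $a$ must lie in the subalgebra $\lla a_0,a_n\rra$ generated by opposite axes, and is naturally the third axis of a $3\C(2)$ there. The candidate is thus $a:=a_0+a_n-a_0a_n$, which in $\cH_{2n}$ equals $\tfrac{1}{2}(a_0+a_n)-s_n$. Using $s_{2n}=0$ one checks $s_n^2=\tfrac{3}{2}s_n$ and $(a_0+a_n)s_n=3s_n$, whence $a^2=a$; moreover a short computation gives $a\cdot a_0=a+a_0-a_n$, confirming $\lla a_0,a_n,a\rra\cong 3\C(2)$. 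In $\hatH_{2n}$ with $3\mid n$ the candidate picks up a correction $a=\tfrac{1}{2}(a_0+a_n)-s_n-z_{\0,n}$ coming from the $z$-term of $a_0a_n$.

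Lemma \ref{propertyJlem} further forces $a$ to be the third axis in every $\lla a_k,a_{k+n}\rra$, giving the relations
\begin{equation*}
\mathrm{(R1)}\quad a_k+a_{k+n}=a_0+a_n,\qquad k=1,\ldots,n-1.
\end{equation*}
Assuming (R1), direct expansion of $a\cdot a_k$ using the multiplication of $\cH_{2n}$ and the folding $s_{n+k}=s_{n-k}$ yields
\[a\cdot a_k = 2a_k - a - 2s_n + \tfrac{1}{2}\bigl(s_k + s_{n-k} - s_n\bigr).\]
Comparing with the $3\C(2)$-prediction $2a_k-a-2s_n$, the residual term must vanish for $\ad_a$ to have eigenvalues in $\{1,0,2\}$; this forces
\begin{equation*}
\mathrm{(R2)}\quad s_k+s_{n-k}=s_n,\qquad k=1,\ldots,\tfrac{n-1}{2}.
\end{equation*}
Let $I^J\unlhd\cH_{2n}$ be the ideal generated by (R1) and (R2), and set $\cH_{2n}^J:=\cH_{2n}/I^J$. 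In this quotient, $A_{\{1,0\}}(a)=\la a,s_1,\ldots,s_{n-1}\ra$ (the $+1$-eigenspace of $\sg_a$) and $A_2(a)=\la a_k-a_{k+n}\ra$, with $\sg_a$ sending $a_k\mapsto a_{k+n}$ and thus swapping the two $\Miy$-orbits, so property (J) holds. Universality is immediate since (R1) and (R2) were derived as necessary conditions, and the dimension count gives $3n-(n-1)-\tfrac{n-1}{2}=\tfrac{3n+3}{2}=n+\lceil n/2\rceil+1$.

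For part (2), when $3\nmid n$ the $z$-terms vanish identically and $\hatH_{2n}^J=\cH_{2n}^J$. When $3\mid n$, the same (R1) and (R2) are imposed in $\hatH_{2n}$ (now using the corrected $a$), and one additionally analyzes $a\cdot p_{\r,j}$. The key observation is that in characteristic $5$ the $z_{\0,n}$-correction in $a$ produces cancellations with factors of $5$ in the numerator, so that $a$ annihilates each $p_{\r,j}$ up to further folding-style relations on the $p$-layer; imposing these reduces the $p$-contribution from $2n/3$ to $2\lceil n/6\rceil$ basis elements, giving the stated dimension $n+\lceil n/2\rceil+2\lceil n/6\rceil+1$. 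The main obstacle throughout is the full fusion-law verification for $a$ on the whole algebra: the $3\C(2)$-check only handles $\lla a_0,a_n\rra$, and extending it requires evaluating $\ad_a$ on every basis element. The characteristic-$5$ $p$-layer calculation is the most delicate step, where the specific cancellations by $5$ are essential for getting the expected dimension.
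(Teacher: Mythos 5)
Your proposal does not address the statement at hand. Theorem \ref{HWideals} is a quoted result (from \cite{HWquo}) about the ideal $I_n$ generated by $a_0-a_n$, i.e.\ about the universal quotients $\cH_n$ and $\hatH_n$ whose axet has been folded from $X(\infty)$ down to $X(n)$: it asserts an explicit spanning set $B$ for $I_n$ (containing $a_i-a_{i+n}$, $s_j-s_{j+n}$, $s_{jn}$, $s_j-s_{n-j}$, plus $J$ or the analogous $p$-relations), gives bases and hence the dimensions $n+\lfloor n/2\rfloor$ (resp.\ $n+\lfloor n/2\rfloor+2\lfloor n/6\rfloor$), and states the universality of $\cH_n$, $\hatH_n$ among quotients with axet $X(n)$. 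What you have written instead is a construction of $\cH_{2n}^J$, the universal quotient of $\cH_{2n}$ with property (J): you adjoin the third axis $a=\tfrac12(a_0+a_n)-s_n$ of the $3\C(2)$ subalgebra $\lla a_0,a_n\rra$ and quotient by the ideal generated by the differences of these third axes, arriving at the relations (R1), (R2) and the dimension $n+\lceil n/2\rceil+1$. That is the content of Proposition \ref{HWpropertyJ} together with Lemmas \ref{HW3C}, \ref{HWJ} and \ref{HWJordan} later in the paper, not of Theorem \ref{HWideals}.

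The two ideals are genuinely different and neither determines the other: $I_n$ is generated by $a_0-a_n$ and contains the elements $a_i-a_{i+n}$, whereas your ideal $I^J$ is generated by $b_0-b_1$ and contains $a_0+a_n-(a_i+a_{i+n})$ and $s_j+s_{n-j}-s_n$; correspondingly the dimension counts disagree. Nothing in your argument establishes any part of (1)--(3): you never show that the span of $B$ (together with $J$, or with the listed $p$-elements when $3\mid n$) is closed under multiplication by $\hatH$ and hence equals $I_n$, never verify linear independence of the proposed bases of $\cH_n$ and $\hatH_n$, and never prove the universality claim that every quotient of $\cH$ (resp.\ $\hatH$) with axet $X(n)$ factors through $\cH_n$ (resp.\ $\hatH_n$). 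To prove the actual statement you would need to start from $a_0-a_n$, generate the ideal by repeated multiplication against the basis elements $a_i$, $s_j$, $p_{\r,k}$ using the defining products and Lemma \ref{HWquolem}, show the resulting span is $\hatH$-invariant, and deduce the bases and the universality from the fact that a quotient has axet $X(n)$ exactly when the images of $a_0$ and $a_n$ coincide.
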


By \cite[Theorem 1.4]{HWquo}, every ideal of $\hatH$ is invariant under the full automorphism group and hence every quotient is symmetric and has a regular axet.   In particular, the automorphism group acts transitively on the axet $X(n)$ in $\cH_n$ and $\hatH_n$ for all $n \in \N$.

Recall that we wish to find the quotients of $\cH_{2n}$ (or $\hatH_{2n}$ when $\ch(\FF) = 5$), for $n$ odd, which have property (J).  For simplicity, we use the same notation $a_i$ and $s_j$ for elements of $\cH$ and their images in $\cH_n$; similarly for $\hatH$ and $\hatH_n$.  As before, the opposite axes $a_i$ and $a_{i+n}$ in $\cH_{2n}$ (or $\hatH_{2n}$) generate a $2$-generated axial algebra of Jordan type $2$.  Since $\ch(\FF) \neq 3$, $2 \neq \frac{1}{2}$ and so by Theorem \ref{Jordanclassification}, $\lla a_i, a_{i+n} \rra \cong 3\C(2)$.  By Lemma \ref{propertyJlem}, any additional axis $a$ of Jordan type $2$ giving the algebra property (J) must lie in the intersection of all the $3\C(2)$ subalgebras $\lla a_i, a_{i+n} \rra$.  In particular, $a$ is an axis of Jordan type $2$ in each subalgebra.  The algebra $3\C(2)$ only has 6 primitive idempotents, three of Jordan type $2$ and three of Jordan type $1-2 = -1$.  Since $\ch(\FF) \neq 3$, $-1 \neq 2$ and so $a$ must be the third axis of Jordan type $2$ in each $\lla a_i, a_{i+n} \rra$ subalgebra.
 
\begin{lemma}\label{HW3C}
In $\hatH_{2n}$, for every $i \in 0, \dots, n-1$, the third axis of the subalgebra $\lla a_i, a_{i+n} \rra = \la a_i, a_{i+n}, s_n + z_{\ii,n} \ra \cong 3\C(2)$ is $b_i := \frac{1}{2}(a_i + a_{i+n}) - (s_n + z_{\ii,n})$.
\end{lemma}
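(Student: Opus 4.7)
The plan is to identify $\lla a_i, a_{i+n} \rra$ as a $3\C(2)$-subalgebra and then read off its third axis from the generic $3\C(\eta)$ multiplication formula. First I would verify that this subalgebra is of Jordan type $2$: using the explicit $\Aut(\hatH) \cong D_\infty$ action recalled immediately before Theorem \ref{HWideals}, the Miyamoto involution $\tau_{a_i}$ sends $a_{i+n}$ to $a_{i-n}$, and the relation $a_{i-n} - a_{(i-n)+2n} \in I_{2n}$ from Theorem \ref{HWideals} gives $a_{i-n} = a_{i+n}$ in $\hatH_{2n}$. By symmetry, $\tau_{a_{i+n}}$ fixes $a_i$. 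Since both generators of the subalgebra are fixed by both Miyamoto involutions of the subalgebra, the $\tfrac{1}{2}$-eigenspace of each axis inside the subalgebra is trivial, so both are axes of Jordan type $2$ there.

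By Theorem \ref{Jordanclassification}, together with $\eta = 2 \neq \tfrac{1}{2}, -1$ (valid since $\ch(\FF) \neq 2, 3$ for $\hatH$), the subalgebra is isomorphic to either $2\B$ or $3\C(2)$. I would next compute
\[
a_i a_{i+n} = \tfrac{1}{2}(a_i + a_{i+n}) + s_n + z_{\ii,n}
\]
directly from rule (1) in the definition of $\hatH$. The basis description in Theorem \ref{HWideals}, applied with $2n$ in place of $n$, includes $s_n$ as a basis vector, along with $p_{\overline{1},n}$ and $p_{\2,n}$ when $3 \mid n$, all independent from the span of the axes. Hence $s_n + z_{\ii,n}$ is nonzero and linearly independent of $\{a_i, a_{i+n}\}$, which rules out $2\B$, forces $\lla a_i, a_{i+n} \rra \cong 3\C(2)$, and confirms the three-dimensional spanning set $\la a_i, a_{i+n}, s_n + z_{\ii,n} \ra$.

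Finally, in $3\C(\eta)$ with generating axes $x, y$, the defining product $xy = \tfrac{\eta}{2}(x + y - z)$ gives the third axis as $z = x + y - \tfrac{2}{\eta} xy$. Specialising to $\eta = 2$ and $(x, y) = (a_i, a_{i+n})$, and substituting the product computed above, yields exactly
\[
b_i = a_i + a_{i+n} - a_i a_{i+n} = \tfrac{1}{2}(a_i + a_{i+n}) - s_n - z_{\ii,n},
\]
as claimed. I do not anticipate a serious obstacle; the only slightly delicate point is the identification $a_{i-n} = a_{i+n}$ inside $\hatH_{2n}$, which is what pins down the Jordan type of the subalgebra and therefore the applicable cell of the classification.
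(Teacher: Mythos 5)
Your proof is correct and takes essentially the same route as the paper: both arguments identify $\lla a_i, a_{i+n}\rra$ as $3\C(2)$ via the classification of Jordan type algebras and then read off the third axis from $x+y-xy$, using the product rule to get $a_ia_{i+n}=\tfrac{1}{2}(a_i+a_{i+n})+s_n+z_{\ii,n}$. The one point to flag is that your step ``trivial $\tfrac{1}{2}$-eigenspace $\Rightarrow$ Jordan type $2$'' presupposes the $\cM(2,\tfrac{1}{2})$ fusion law, which fails for $\hatH_{2n}$ outside characteristic $5$ when the $p_{\r,j}$ survive (the fusion law then has an extra \emph{evenly graded} eigenvalue $\tfrac{5}{2}$, e.g.\ $a_0z_{\0,j}=\tfrac{5}{2}z_{\0,j}$, which the Miyamoto-fixed-point argument does not exclude); the paper acknowledges exactly this case in a footnote and defers it to a direct check.
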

\begin{proof}
The third axis in a $3\C(2)$ subalgebra generated by axes $x$ and $y$ is given by $x+y - xy$.  For $\hatH_{2n}$, this is $b_i := a_i + a_{i+n} - a_i a_{i+n} = a_i + a_{i+n} -\frac{1}{2}(a_i + a_{i+n}) - (s_n + z_{\ii,n}) = \frac{1}{2}(a_i + a_{i+n}) - (s_n + z_{\ii,n})$.\footnote{When $\hatH_{2n}$ is not of Monster type, one can check that $\lla a_i, a_{i+n} \rra$ is still isomorphic to a $3\C(2)$ subalgebra.  This is automatic if it is of Monster type.}
\end{proof}

By Theorem \ref{HWideals} (2) and (4), $b_i \neq b_j$ for $0 \leq i \neq j \leq n-1$.  However, we may quotient by an ideal to force these to be equal.  The smallest such ideal $I$ is generated by $b_i-b_j$ for all $0 \leq i < j \leq n-1$.  Note that, since ideals are invariant under the action of the Miyamoto group, $I$ is in fact generated by $b_0-b_1$.

Before calculating an explicit basis for the ideal $I$, we need a technical lemma from \cite{HWquo}, which follows immediately from the multiplication in $\hatH$.

\begin{lemma}\textup{\cite[Lemma 3.8]{HWquo}}\label{HWquolem}
For $i\in \Z$, $j \in \N$, $h,k \in 3\N$ and $\{ \r, \t\} \subseteq \Z_3$, we have the following.
\begin{enumerate}
\item $a_i z_{\r,j}=\frac{3}{2}z_{\r,j} + z_{-(\ii+\r),j}$
\item $s_j z_{\r,k}=\frac{3}{4}( z_{\r,j}+ z_{\r, k}) - \frac{3}{8}(z_{\r, |j-k|} + z_{\r, j+k})$
\item $p_{\r,h}z_{\t,k}=\frac{3}{4}( p_{-(\r+\t),h}+ p_{-(\r+\t),k}) - \frac{3}{8}(p_{-(\r+\t), |h-k|} + p_{-(\r+\t), h+k})$
\item $z_{\r,h}z_{\t,k}= -\frac{3}{4}( z_{-(\r+\t),h}+ z_{-(\r+\t),k}) + \frac{3}{8}(z_{-(\r+\t), |h-k|} + z_{-(\r+\t), h+k})$
\end{enumerate}
\end{lemma}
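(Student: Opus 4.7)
The plan is to prove each of the four identities by direct expansion, exploiting the definition $z_{\r,j} = p_{\r+\overline{1},j} - p_{\r-\overline{1},j}$ and the multiplication rules (3), (5), (6) from the definition of $\hatH$, together with the relation $p_{\0,j} + p_{\overline{1},j} + p_{\2,j} = 0$ (which is built into the definition via $p_{\0,j} := -p_{\overline{1},j} - p_{\2,j}$). All index arithmetic takes place in $\Z_3$ for the first subscript and in $\N$ for the second, and these are independent of each other.

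For identity (1), I would write $a_i z_{\r,j} = a_i p_{\r+\overline{1},j} - a_i p_{\r-\overline{1},j}$ and apply rule (3) to each term. The coefficients of $p_{\r\pm\overline{1},j}$ immediately recombine into $\frac{3}{2}z_{\r,j}$, while the remaining $p_{-(\ii+\r)\mp\overline{1},j}$ terms recombine, after a sign flip arising from $-(\ii+\r\pm\overline{1}) = -(\ii+\r)\mp\overline{1}$, into $z_{-(\ii+\r),j}$. Identity (2) is even more straightforward: expanding $s_j z_{\r,k}$ via rule (5) produces four pairs of the form $p_{\r\pm\overline{1},\bullet}$ whose differences directly assemble into $z_{\r,\bullet}$'s with exactly the stated coefficients. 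Identity (4) follows the same pattern: expand $z_{\r,h}z_{\t,k}$ by writing out the left $z$ as a difference of $p$'s, apply identity (3) (which we will already have proved) to each of the two summands, and note that for $\s = -(\r+\t)$ the combination $p_{\s-\overline{1},\bullet} - p_{\s+\overline{1},\bullet}$ equals $-z_{\s,\bullet}$; this accounts for the overall sign change relative to identity (3).

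The only step that is not purely mechanical is identity (3), which is the main obstacle because the right-hand side is expressed in $p$'s whereas rule (6) produces $z$'s. Here I would expand $p_{\r,h}z_{\t,k}$ via rule (6) applied to each of $p_{\r,h}p_{\t\pm\overline{1},k}$ and observe that everything reduces to evaluating expressions of the shape $z_{\s-\overline{1},\bullet} - z_{\s+\overline{1},\bullet}$ with $\s = -(\r+\t)$. The crucial computation is
\[
z_{\s-\overline{1},\bullet} - z_{\s+\overline{1},\bullet} = (p_{\s,\bullet} - p_{\s+\overline{1},\bullet}) - (p_{\s-\overline{1},\bullet} - p_{\s,\bullet}) = 2p_{\s,\bullet} - p_{\s+\overline{1},\bullet} - p_{\s-\overline{1},\bullet} = 3p_{\s,\bullet},
\]
where the last equality uses $p_{\s+\overline{1},\bullet} + p_{\s-\overline{1},\bullet} = -p_{\s,\bullet}$. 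Multiplying through by the coefficients $\tfrac{1}{4}$ and $-\tfrac{1}{8}$ coming from rule (6) produces exactly the $\tfrac{3}{4}$ and $-\tfrac{3}{8}$ coefficients in the claimed identity.

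The suggested order is: do (1) and (2) first (they need only expansion and recombination), then (3) (which needs the $3p_{\s,\bullet}$ collapse), and finally (4) (which uses (3) as a black box). No case analysis on $\r, \t$ is needed since all manipulations are linear in $\Z_3$-indices; the only minor bookkeeping item to check is the edge cases where the second subscript is not in $3\N$ or is zero, which are handled by the vanishing conventions $s_0 = 0$ and $p_{\r,j} = 0$ for $j \notin 3\N$ (whence the corresponding $z_{\r,j}$ also vanish), so all formulas remain consistent at the boundary.
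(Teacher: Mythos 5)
Your proof is correct: each identity does follow by writing $z_{\r,j}=p_{\r+\overline{1},j}-p_{\r-\overline{1},j}$, applying the relevant multiplication rule, and recombining using $p_{\0,j}+p_{\overline{1},j}+p_{\2,j}=0$ (the key collapse $z_{\s-\overline{1},\bullet}-z_{\s+\overline{1},\bullet}=3p_{\s,\bullet}$ in part (3) is exactly right, as is the sign flip $p_{\s-\overline{1},\bullet}-p_{\s+\overline{1},\bullet}=-z_{\s,\bullet}$ in part (4)). This is precisely the argument the paper has in mind --- it gives no proof, simply citing the lemma and remarking that it ``follows immediately from the multiplication in $\hatH$'' --- so your write-up matches the intended approach.
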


\begin{lemma}\label{HWJ}
Let $I = (b_0-b_1) \unlhd \hatH_{2n}$.  If $J \subset I_{2n}$, then $I$ has a basis given by
\begin{align*}
& a_0+a_{n} - (a_{i} + a_{i+n}) -2(z_{\0,n} - z_{\ii,n}), &1 \leq i \leq n-1,\\
& s_j + s_{n-j} - s_n, & 1 \leq j \leq \left\lfloor \tfrac{n}{2} \right\rfloor.\\
\intertext{If $J \not \subset I_{2n}$, then $3|n$ \textup{(}and $\ch(\FF) = 5$\textup{)}.  Then $I$ has a basis given by the above elements and}
& p_{\r,j} + p_{\r,n-j} - p_{\r,n}, & j \in 3\N, 1 \leq j \leq \left\lfloor \tfrac{n}{2} \right\rfloor, r = 1,2.
\end{align*}
\end{lemma}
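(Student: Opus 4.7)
The plan is to show $I=\tilde I$, where $\tilde I$ denotes the linear span of the proposed basis elements, by proving both inclusions and then establishing linear independence. Throughout, I would repeatedly use the multiplication rules of $\hatH$ together with Lemma \ref{HWquolem}.

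For $\tilde I\subseteq I$, the first family is transparent: by Miyamoto invariance (Lemma \ref{Miyideal}), $I$ contains every conjugate of $b_0-b_1$, and since $\tau_{a_k}$ sends $b_i$ to $b_{2k-i}$, the $\Miy(X)$-orbit of $b_0-b_1$ spans the set $\{b_i-b_j\}$; a direct expansion using Lemma \ref{HW3C} gives $2(b_0-b_i)=a_0+a_n-(a_i+a_{i+n})-2(z_{\0,n}-z_{\ii,n})$, which is the first listed family. For the second family, I would compute $a_k(b_0-b_1)$ for suitable $k$ using the defining products of $\hatH$ together with Lemma \ref{HWquolem}(1); after subtracting the first-family elements already in $I$, what remains is (up to an explicit scalar) $s_j+s_{n-j}-s_n$ plus, when $3\mid n$, a controlled combination of $z$-terms. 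Then for the third family (arising only when $J\not\subset I_{2n}$, i.e.\ $3\mid n$ and $\ch(\FF)=5$), products of the form $p_{\r,h}(b_0-b_1)$, via Lemma \ref{HWquolem}(3)--(4), yield the $p_{\r,j}+p_{\r,n-j}-p_{\r,n}$ relations in the same way.

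For the reverse inclusion $I\subseteq\tilde I$, it suffices to verify that $\tilde I$ is an ideal of $\hatH_{2n}$, since $b_0-b_1\in\tilde I$ by direct inspection. Closure under multiplication by each basis element of $\hatH_{2n}$ (the $a_k$, the $s_l$, and, when $3\mid n$, the $p_{\r,h}$) reduces to checking that the reflection identity $j\leftrightarrow n-j$ is preserved by each of the multiplication formulas and Lemma \ref{HWquolem}. Concretely: the second-family generator $s_j+s_{n-j}-s_n$ is manifestly stable under the involution $j\mapsto n-j$, so multiplying by any $a_k$, $s_l$ or $p_{\r,h}$ produces an expression that is again invariant under this involution, and so expands into combinations of second- and third-family generators together with the mirror-symmetric $a$-terms $a_i+a_{n-i}$ that, after passing to $\hatH_{2n}$, become first-family generators.

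Finally, linear independence of the proposed basis is immediate from the explicit bases of $\hatH_{2n}$ given in Theorem \ref{HWideals}: modulo the congruences defining $\hatH_{2n}$, each listed generator has a distinct leading term in the basis $\{a_i:0\le i<2n\}\cup\{s_j:1\le j\le n\}(\cup\{p_{\r,j}\})$, so no nontrivial linear relation is possible.

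The main obstacle will be the bookkeeping in the closure step, especially when $3\mid n$: the interaction of $z$'s and $p$'s via Lemma \ref{HWquolem}(3)--(4) produces many cross-terms which must be checked to combine into the listed families. A useful simplification is to work modulo the span of the first-family generators, reducing the verification to showing that the second- and third-family relations are each stable under the multiplication operators, which ultimately reflects the symmetry $j\mapsto n-j$ already built into the defining multiplication table of $\hatH$.
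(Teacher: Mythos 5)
Your proposal is correct and follows essentially the same strategy as the paper: expand $2(b_0-b_i)$ to get the first family, compute products of $b_0-b_i$ with axes (using Lemma \ref{HWquolem}) to extract the $s$- and $p$-relations, verify that the span of the listed elements is closed under multiplication, and read off linear independence from the bases in Theorem \ref{HWideals}. The only minor divergence is in the third family: the paper obtains the $p$-relations from the same computation of $a_0(b_0-b_i)$ by using the automorphism action (which fixes the $s_j$ but permutes the indices of the $z_{\r,j}$) to separate the $s$- and $z$-parts, whereas you propose computing $p_{\r,h}(b_0-b_1)$ directly — this also works, but note that it yields only the differences of the target elements $p_{\r,h}+p_{\r,n-h}-p_{\r,n}$ over $\r\in\Z_3$ (plus a term with coefficient $\tfrac{5}{2}$ that vanishes only because $\ch(\FF)=5$), so you must still use the relation $p_{\0,j}=-p_{\1,j}-p_{\2,j}$ to recover them individually.
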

\begin{proof}
First, $2(b_i-b_j) \in I$ and these are linear combinations of $2(b_0-b_i) = a_0 + a_n - a_i - a_{i+n}-2(z_{\0,n} - z_{\ii,n})$, where $i = 1, \dots, n-1$.  We wish to calculate $a_k(b_i-b_j)$, for all $0 \leq k <2n$.  Since the automorphism group acts transitively on the axet, it also acts transitively on the set of $b_i$.  Let $g \in \Aut(\hatH_{2n})$ such that $a_k^g = a_0$.  Then, $a_k(b_i-b_j) = (a_0(b_{i^g}-b_{j^g}))^{g^{-1}}$.  Since the $b_0-b_i$ span the space of differences of the $b_i$, it suffices to calculate just $a_0(b_0 - b_i)$ and take the orbit under the automorphism group.

Now, using Lemma \ref{HWquolem} (1), we calculate
\begin{align*}
2 a_0(b_0-b_i) &= a_0\big(a_0 + a_n - a_i - a_{i+n}-2(z_{\0,n} - z_{\ii,n})\big) \\
 &= \tfrac{1}{2}(a_0 + a_n - a_i - a_{i+n}) +s_{n} -s_{i} -s_{i+n} \\
 &\phantom{{}={}} + z_{\0,n} -z_{\0,i} -z_{\0,i+n} - 3(z_{\0,n} - z_{\ii,n}) -2(z_{\0,n} - z_{\ii,n}) \\
 &= \tfrac{1}{2}(b_0-b_i) + s_{n} -s_{i} -s_{i+n} + z_{\0,n} -z_{\0,i} -z_{\0,i+n} \\
 &\phantom{{}={}} -2( 2z_{\0,n} - (z_{\ii,n} + z_{-\ii,n}))
\end{align*}
First, suppose that $J \subset I_{2n}$.  Then all the $z_{\r,j} \in J \subset I_{2n}$ and hence the above yields $s_{n} -s_{i} -s_{i+n} \in I$.  By Theorem \ref{HWideals} (2), $s_{n-i} - s_{n+i} \in I_{2n}$ and so $s_i + s_{n-i} - s_n \in I$ as required.  Since $s_i + s_{n-i} - s_n$ is fixed by $\Aut(\hatH_{2n})$, we do not obtain any additional elements from $a_k(b_i-b_j)$.  Note that it is immediately clear from Theorem \ref{HWideals} (2) that for the $s_{j} +s_{n-j} -s_n$, we may take $1 \leq j \leq \lfloor \frac{n}{2} \rfloor$.

Now suppose that $J \not \subset I_{2n}$; then $3|n$ and by assumption $\ch(\FF) = 5$.  By the above calculation, we have that
\begin{equation}\label{eq:a(b0-b1)}
s_{n} -s_{i} -s_{i+n} + z_{\0,n} -z_{\0,i} -z_{\0,i+n}-2( 2z_{\0,n} - (z_{\ii,n} + z_{-\ii,n})) \in I
\end{equation}

If $i \not \equiv 0 (3)$, then $z_{\0,i} = 0 = z_{\0,n-i}$ and $z_{\0,n} + z_{\ii,n} + z_{-\ii,n} = 0$.  So Equation~(\ref{eq:a(b0-b1)}) reduces to $s_n -s_{i} -s_{i+n} - 5 z_{\0,n} = s_{n} -s_{i} -s_{i+n}\in I$, as $\ch(\FF) = 5$.

If $i \equiv 0 (3)$, then $z_{\ii,n} = z_{-\ii,n} = z_{\0,n}$.  So Equation~(\ref{eq:a(b0-b1)}) reduces to $s_{n} -s_{i} -s_{i+n} + z_{\0,n} -z_{\0,i} -z_{\0,i+n} \in I$.  Since $g \in \Aut(\hatH_{2n})$ fixes all the $s_k$ but acts as ${z_{\r,j}}^g = z_{\overline{r^g},j}$, we get that $s_{n} -s_{i} -s_{i+n} \in I$ and  $z_{\r,n} -z_{\r,i} -z_{\r,i+n} \in I$, for $r = 0,1,2$.  Once again as $s_{n-i} - s_{n+i} \in I_{2n}$, we get $s_i + s_{n-i} - s_n \in I$.  Since $3p_{\r,j} = z_{\r-\overline{1},j} - z_{\r+\overline{1},j}$ and $p_{\r,n-j} - p_{\r,n+j} \in I_{2n}$, we get that $p_{\r,j} + p_{\r,n-j} - p_{\r,n} \in I$ for all $1 \leq j \leq \left\lfloor \tfrac{n}{2} \right\rfloor$ and $r = 1,2$ as required.

A straightforward, but long, calculation now shows that the space spanned by the given elements is closed under multiplication with elements of $\hatH_{2n}$ and so these span the ideal $I$.  By Theorem \ref{HWideals} (2) and (4), they are linearly independent and so they form a basis.
\end{proof}

In particular, $I$ is a proper ideal of $\hatH_{2n}$.  Let $a$ be the common image of all the $b_i = \tfrac{1}{2}(a_i+a_n) -\sg_n$ in $\hatH_{2n}/I$.

\begin{lemma}\label{HWJordan}
The algebra $\hatH_{2n}^J := \hatH_{2n}/I$ has property \textup{(J)} with respect to the axis $a$.
\end{lemma}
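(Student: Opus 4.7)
The plan is to define an involutory algebra automorphism $\sigma$ of $\hatH_{2n}^J$ that fixes $a$ and swaps $a_i \leftrightarrow a_{i+n}$, and then to identify it with the Miyamoto involution $\sigma_a$ for a Jordan type-$2$ axis $a$.

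First I would check that $a$ is a well-defined idempotent fixed by $\Miy(X)$. Idempotency descends from $b_i$ being an axis of the subalgebra $\lla a_i, a_{i+n} \rra \cong 3\C(2)$ (Lemma \ref{HW3C}), while $\Miy(X)$-invariance holds because the Miyamoto group permutes the pairs $(a_i, a_{i+n})$, hence permutes the $b_i$'s, all of which collapse to $a$ in the quotient.

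Next I would define $\sigma$ on $\hatH_{2n}$ by $a_i \mapsto a_{i+n}$ and fixing every other basis element, then verify that $\sigma$ is an algebra automorphism which preserves $I$. The only delicate compatibility condition is with $a_i a_j = \tfrac{1}{2}(a_i + a_j) + s_{|i-j|} + z_{\ii, |i-j|}$, which requires $z_{\ii, k} = z_{\overline{i+n}, k}$. This holds either because $\overline{n} = \0$ (when $3 \mid n$, the characteristic-$5$ setting) or because all $z$-elements vanish in $\hatH_{2n} = \cH_{2n}$ (when $3 \nmid n$, by Theorem \ref{HWideals}). The products of Lemma \ref{HWquolem} are handled analogously, and preservation of $I$ is automatic because $\sigma(b_i) = b_i$ makes all generators in Lemma \ref{HWJ} $\sigma$-invariant. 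Since $a_i$ and $a_{i+n}$ lie in opposite $\Miy(X)$-orbits ($n$ being odd), $\sigma$ swaps the two halves of the axet $X(2n)$.

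Finally, to identify $\sigma$ with $\sigma_a$ I must verify $a$ is a Jordan axis of type $2$. Within $\lla a_i, a_{i+n}\rra \cong 3\C(2)$ the differences $e_i := a_i - a_{i+n}$ lie in the $2$-eigenspace, so $a \cdot e_i = 2 e_i$ globally. Direct computation using the multiplication formulas and the relations in $I$ (notably $s_j + s_{n-j} = s_n$, $s_{2n} = 0$, and $a_k + a_{k+n} \equiv 2a + 2(s_n + z_{\overline{k}, n})$) gives $a \cdot s_j = 0$ for all relevant $j$, and a parallel calculation via Lemma \ref{HWquolem} gives $a \cdot z_{\r, k} = a \cdot p_{\r, k} = 0$ in the $3 \mid n$ case. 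A dimension count against the basis of $\hatH_{2n}^J$ described after Lemma \ref{HWJ} then shows $A_1(a) = \la a \ra$, $A_2(a) = \la e_0, \ldots, e_{n-1} \ra$, and $A_0(a)$ is spanned by the remaining basis elements. Combining the $C_2$-grading induced by $\sigma$ with the Seress-type consequence $A_0 \cdot A_0 \subseteq A_0$ (which follows once $a \cdot A_0 = 0$ is established) yields the full $\cJ(2)$ fusion law. Hence $\sigma = \sigma_a$ and property (J) holds. The main obstacle will be the bookkeeping of $p$- and $z$-terms in the characteristic-$5$ setting when $3 \mid n$; modulo this routine but intricate computation, the argument proceeds as sketched.
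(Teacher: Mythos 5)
Your overall strategy coincides with the paper's: establish that the $a_i-a_{i+n}$ are $2$-eigenvectors via the $3\C(2)$ subalgebras, compute $a s_j$ (and $a p_{\r,j}$) directly to show they lie in the $0$-eigenspace, count against a basis to get semisimplicity, and use the translation-by-$n$ map to produce a $C_2$-grading of the fusion law. (The paper obtains that map for free as $(\tau_0\tau_{\frac{1}{2}})^n$ inside $\Aut(\hatH)\cong D_\infty$ rather than defining it by hand and re-verifying multiplicativity, but that is cosmetic.) Be aware that the computation you defer as routine, namely $a s_j\in I$, is where the actual content sits: it is precisely the point at which $\ch(\FF)=5$ enters when $3\mid n$, and it requires the case analysis on $j \bmod 3$ together with the explicit basis of $I$ from Lemma \ref{HWJ}. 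You flag this honestly, so I treat it as deferred rather than missing.

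The one step that does not work as stated is the last one: $A_0(a)\cdot A_0(a)\subseteq A_0(a)$ does \emph{not} follow from $a\cdot A_0(a)=0$. The Seress Lemma runs in the opposite direction — it takes $0\star\lambda\subseteq\{\lambda\}$ as a hypothesis on the fusion law and deduces partial associativity — so invoking it here is circular. Knowing that $a$ annihilates $A_0(a)$ tells you nothing about products of two $0$-eigenvectors beyond what the grading already gives, namely $A_0(a)A_0(a)\subseteq A_1(a)\oplus A_0(a)$; to exclude an $A_1(a)$-component you must check directly, as the paper does, that the products $s_js_l$, $s_jp_{\r,k}$ and $p_{\r,h}p_{\t,k}$ all lie in the span of the $s_j$ and $p_{\r,k}$. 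This is immediate from the defining multiplication of $\hatH$, so the repair is trivial, but the implication you appeal to is false and should be replaced by that direct observation.
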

\begin{proof}
From the $3\C(2)$ subalgebras, we know that $a_i-a_{i+n}$ is a $2$-eigenvector of $a$ for all $i = 0, \dots, n-1$.  Using the multiplication in $\hatH_{2n}$, we have
\begin{align*}
a s_j &= \left( \tfrac{1}{2}(a_0+a_n) - (s_n + z_{\0,n}) \right) s_j \\
&= -\tfrac{3}{8}(a_0 + a_n) + \tfrac{3}{16}(a_{-j}+a_{n-j} +a_j + a_{j+n}) + \tfrac{3}{2} s_j - \tfrac{1}{2}(z_{\0,j} + z_{\overline{n},j})\\
&\phantom{{}={}} {} - \tfrac{3}{4}(s_n + s_j) + \tfrac{3}{8}(s_{n-j} + s_{n+j})  - \tfrac{3}{4}(z_{\0,n} + z_{\0,j}) + \tfrac{3}{8}(z_{\0,n-j} + z_{\0,n+j}) \\
&= -\tfrac{3}{8}( b_0 - b_j + b_0 - b_{n-j} ) + \tfrac{3}{4}(s_j + s_{n-j} - s_n ) \\
&\phantom{{}={}} {}+ \tfrac{3}{4}(z_{\0,j} + z_{\0,n-j} - z_{\0,n}) -\tfrac{10}{4} z_{\0,j} - \tfrac{3}{8}\big( 2z_{\0,n} - (z_{\jj,n} + z_{-\jj,n}) \big) 
\end{align*}
By Lemma \ref{HWJ}, to show the above is in $I$, we must show that
\begin{equation}\label{HWaJordan}
\tfrac{3}{4}(z_{\0,j} + z_{\0,n-j} - z_{\0,n}) -\tfrac{10}{4} z_{\0,j} - \tfrac{3}{8}\big( 2z_{\0,n} - (z_{\jj,n} + z_{-\jj,n}) \big)
\end{equation}
is in $I$.  If $J \subset I_{2n}$, then all the $z_{\r,k} \in I_{2n}$ and so this is trivially true.  So suppose that $J \not \subset I_{2n}$; then $3|n$ and hence we have that $\ch(\FF) = 5$.

If $j \equiv 0 \mod 3$, then $z_{\jj,n} = z_{-\jj,n} = z_{\0,n}$ and hence, by Lemma \ref{HWJ}, Equation~(\ref{HWaJordan}) reduces to $\tfrac{3}{4}(z_{\0,j} + z_{\0,n-j} - z_{\0,n})$, which is in $I$.

If $j \not \equiv 0 \mod 3$, then $z_{\0,j} = 0 = z_{\0, n-j}$ and $z_{\0,n} + z_{\jj,n} + z_{-\jj,n} = 0$.  So Equation~(\ref{HWaJordan}) reduces to $-\frac{15}{8} z_{\0,n} = 0$ since $\ch(\FF) = 5$.  Therefore in all cases, $s_j$ is a $0$-eigenvector for $a$ in $H_{2n}/I$.  A very similar calculation shows that $p_{\r,j}$ is also a $0$-eigenvector for $a$ in $H_{2n}/I$.

A counting argument shows that $a$, the $a_i- a_{n+i}$, $s_j$ and $p_{\r,j}$ is a basis of eigenvectors for $\hatH_{2n}^J$.  So we see that $a$ is semisimple with eigenvalues $1$, $0$ and $2$.

Since the flip automorphism $f = \tau_{\frac{1}{2}}$ also fixes $I$, it induces an automorphism of $\hatH_{2n}^J$ which we will also call $f$.  Now observe that $(\tau_0 \tau_{\frac{1}{2}})^n$ in $D = \Aut(\hatH)$ maps $i \mapsto i+n$.  So this induces $\rho \in \Aut(\hatH_{2n})$ which fixes $a$ and the $s_j$ and inverts each $a_i - a_{i+n}$.  If $J \subset I_{2n}$, then this is a basis for $\hatH_{2n}^J$.  If $J \not \subset I_{2n}$, then $3|n$ and hence ${p_{\r,j}}^\rho = \sgn(\rho) p_{\overline{r+n}, j} = p_{\r,j}$.  Hence in both cases, $\rho$ is an automorphism which negates the $2$-eigenspace of $a$ and fixes the $1$- and $0$- eigenspaces.  As this preserves the fusion law of $a$, the fusion law must be $\mathbb{Z}_2$-graded.  From the definition, the product of $s_j$ with $s_k$, $s_j$ with $p_{\r,k}$ and $p_{\r,j}$ with $p_{\overline{s},k}$ all lie in the space spanned by the $s_j$ and $p_{\r,k}$.  Hence, for the fusion law, $0 \star 0 \subseteq \{0\}$ and hence $a$ is an axis of Jordan type $2$.
\end{proof}

These lemmas complete the proof of Proposition \ref{HWpropertyJ}.  The claims about the dimensions follows from the bases given in Lemma \ref{HWJ}.  Note that $a$ is not contained in any proper ideal of $\hatH_{2n}^J$.  Indeed, if $a$ were contained in some proper ideal $K$, then $K \cap \lla x, x' \rra$ would be an ideal of $\lla x, x' \rra \cong 3\C(2)$ (and $3\C(2) \not \cong 3\C(\frac{1}{2})$ since $\ch(\FF) \neq 3$ as noted above).  Suppose $K \cap \lla x, x' \rra$ is a proper ideal of $\lla x, x' \rra$.  From Section \ref{sec:Jordan}, as $\ch(\FF) \neq 3$, $3\C(2)$ has a single proper ideal $\la x-x', x-a \ra$ and $a \notin \la x-x', x-a \ra$.  So $K \cap \lla x, x' \rra = \lla x,x' \rra$.  Then, $x, x' \in K$ for all $x \in X$ and hence $I = \hatH$, a contradiction.  Hence $a$ is not contained in any proper ideal and therefore every non-trivial quotient of $\hatH_{2n}^J$ also has property (J).

%%%%%%%%%%%%%%%%%%%%%%%%%%%%%%%%%%%%%%%%%%%%%%%%%%%%%%%%%

\section{Collapsing shapes on $3$-generated axets revisited}\label{sec:forbidden}

Having studied the $2$-generated symmetric algebras in the previous section, we can now prove our final theorem.

\begin{theorem} \label{1+nshape}
Let $X = \la a,b,c \ra \cong X_1(1+n)$, $n \geq 3$ odd, be a $3$-generated axet, where $a$ is the fixed axis.  Let $Y = \la a,b\ra$ and $Z = X - \{a\}$.  Suppose that $\Theta$ is a shape on $X$ for $\cM(\alpha, \beta)$ and $A$ is a completion for $\Theta$.
\begin{enumerate}
\item If $\sigma_a = 1$, then $A_Y \cong 2\B$ and $A \cong 1\A \oplus \lla b,c \rra$, where $\lla b,c \rra$ is isomorphic to a quotient of one of:
\begin{enumerate}
\item $3\C(\bt)$, $3\A(\al,\bt)$, $5\A(\al, \frac{5\al-1}{8})$, $\cH_n$, $\hatH_n$,
\item $S(\dl)$, $\IY_3(\al, \frac{1}{2}, \frac{\dl}{2})$ where $\dl \in N(n)$,
\item $\Cl$, $\IY_3(\al, \frac{1}{2}, 1)$, $\IY_5(\al, \frac{1}{2})$ over a field of characteristic $p \geq 3$.
\end{enumerate}
\item If $\sigma_a \neq 1$, then $A_Y \in \cJ^2(\alpha) -\{ 2\B\}$ and either
\begin{enumerate}
\item $A$ is in fact a symmetric $2$-generated algebra and a quotient of one of
\begin{enumerate}
\item $A \cong 6\A(\al, \tfrac{\al^2}{4(1-2\al)})$,
\item $A \cong 6\J(2\bt, \bt)$,
\item $A \cong \IY_3(\al, \tfrac{1}{2}, \mu)$, where $2\mu \in N(2n)$,
\item $A \cong \cH_{2n}^J$
\item $A \cong \hatH_{2n}^J$, where $3|n$, or
\end{enumerate}
\item $(\al,\bt) = (\frac{1}{2}, 2)$ and $A \cong \mathrm{Bar}_{0,1}(\frac{1}{2}, 2)$.
\end{enumerate}
\end{enumerate}
\end{theorem}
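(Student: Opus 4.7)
The proof will proceed by invoking Proposition \ref{1+nshapeprop} as the main reduction tool, then sifting the classification of symmetric $2$-generated $\cM(\al,\bt)$-axial algebras using the axet computations in Section \ref{sec:2genalbt} and the property (J) analysis. In case 1 ($\sg_a = 1$), Proposition \ref{1+nshapeprop} already yields the direct-sum decomposition $A \cong 1\A \oplus \lla b,c\rra$ with $\lla b,c\rra$ symmetric. Since $\Miy(\lla b,c\rra)$ is transitive on $Z$ (as $n$ is odd), the axet of $\lla b,c\rra$ must be $X(n)$ for the specified odd $n$, and I would enumerate, via Theorem \ref{2genMonster}, precisely which symmetric $2$-generated $\cM(\al,\bt)$-axial algebras admit such an axet. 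The Jordan type summands are handled via Subsection \ref{sec:jordanaxet} (giving $3\C(\bt)$, the spin factors $S(\dl)$ with $\dl \in N(n)$, and $\Cl$ in positive characteristic); the Yabe families with axet $X(4)$ or $X(6)$ are ruled out on parity grounds; $3\A(\al,\bt)$ and $5\A(\al,\tfrac{5\al-1}{8})$ survive with fixed odd axets; $\IY_3(\al,\tfrac12,\mu)$, $\IY_5(\al,\tfrac12)$ (in characteristic $\geq 3$) and the Highwater quotients $\cH_n, \hatH_n$ survive for the appropriate parameter values recorded in Subsections \ref{sec:3Y5Y} and \ref{sec:HW}. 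Organising the survivors by axet type yields the three sublists of case 1.

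In case 2 ($\sg_a \neq 1$), I first handle the generic subcase 2(a) from Proposition \ref{1+nshapeprop}, where $A = B = \lla b',c \rra$ is a symmetric $2$-generated $\cM(\al,\bt)$-axial algebra on the doubled axet $X(2n)$. Here there is the additional constraint of property (J), witnessed by $a$ and $\sg_a$, so I need only identify those symmetric $2$-generated algebras with axet $X(2n)$, $n \geq 3$ odd, that carry property (J). The case-by-case analysis of Section \ref{sec:2genalbt} then yields exactly the sublist in case 2(a): $6\A(\al,\tfrac{\al^2}{4(1-2\al)})$ and $6\J(2\bt,\bt)$ satisfy (J) (giving (i), (ii)), while $6\Y(\tfrac12,2)$ and the various Jordan type candidates are ruled out; $\IY_5$ has only odd axet; $\IY_3(\al,\tfrac12,\mu)$ satisfies (J) exactly when $2\mu \in N(2n)$ (case (iii)); and Proposition \ref{HWpropertyJ} delivers the Highwater quotients $\cH_{2n}^J$ and $\hatH_{2n}^J$ (cases (iv), (v)). Every remaining Yabe family either has the wrong axet parity or fails (J).

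The residual subcases 2(b) and 2(c) of Proposition \ref{1+nshapeprop} both force $\al = \tfrac12$, together with either $A_Y \cong S(-2)$ and $\lla b,b'\rra \cong S(2)^\circ$, or $A_Y \cong S(0)$ and $\lla b,b'\rra \cong 2\B$. In neither subcase is $A$ $2$-generated. To finish, I would exploit the subaxet $\la a,b,b'\ra$, together with the Monster fusion law $\cM(\tfrac12,\bt)$ applied to each axis in $Z \cup Z'$ and the symmetric $2$-generated structure of $B$ on $Z \cup Z'$, to show that $\bt$ is forced to equal $2$, and then to reconstruct the algebra product explicitly and identify the completion with the baric algebra $\mathrm{Bar}_{0,1}(\tfrac12,2)$ (independent of which of 2(b), 2(c) we are in). The main technical obstacle will be this final identification: one has to rule out any other completion in the exceptional spin-factor setting and show that both subcases collapse onto the single $3$-generated baric algebra appearing in the theorem.
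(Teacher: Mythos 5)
Your handling of case 1 and of case 2(a) matches the paper's proof exactly: invoke Proposition \ref{1+nshapeprop}, then filter the classification of symmetric $2$-generated algebras by axet (odd $X(n)$ for case 1, $X(2n)$ plus property (J) for case 2(a)) using the results of Section \ref{sec:2genalbt}. That part is fine.

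The gap is in the exceptional subcases 2(b) and 2(c) of Proposition \ref{1+nshapeprop}. Your plan --- use the subaxet $\la a,b,b'\ra$ and fusion-law computations to force $\bt=2$ and then ``reconstruct the algebra product explicitly'' --- is exactly the part you flag as the main obstacle, and it is missing the decisive step. The paper does not compute anything new here; it observes that $B=\lla b',c\rra$ is still a symmetric $2$-generated algebra with axet $X(2n)$, so the classification applies to $B$ even without property (J). Since these subcases force $\al=\tfrac{1}{2}$, every family with $\bt=\tfrac{1}{2}$ (the spin factors, $\IY_3$, $\IY_5$, $\cH_{2n}$, $\hatH_{2n}$) is excluded, leaving only $6\A$, $6\J(2\bt,\bt)$ and $6\Y(\tfrac{1}{2},2)$ and their quotients. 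In those three families the opposite-axis subalgebra $\lla b,b'\rra$ is respectively $3\C(\al)$, $3\C(\al)$ and $\Cl$. Comparing with the conclusions of Proposition \ref{1+nshapeprop}: $2\B$ is not a quotient of any of these, so subcase 2(c) is \emph{vacuous} --- your expectation that ``both subcases collapse onto the single $3$-generated baric algebra'' is wrong on this point, even though it does not change the final statement. And $S(2)^\circ$ is a quotient only of $\Cl$, which forces $B\cong 6\Y(\tfrac{1}{2},2)^\times$, hence $\bt=2$; then, because $B$ is spanned by its six axes and $A_Y\cong S(-2)$ is known, the product of $a$ with all of $B$ is already determined, so $A$ is the $5$-dimensional algebra $\mathrm{Bar}_{0,1}(\tfrac{1}{2},2)$ with no further reconstruction needed. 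Without the comparison of $\lla b,b'\rra$ against the opposite-axis subalgebras of the surviving families, your argument has no mechanism to rule out 2(c) or to single out $6\Y(\tfrac{1}{2},2)^\times$, so as written the proof of case 2(b) of the theorem is incomplete.
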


\begin{proof}
If $\sg_a=1$, then by Proposition \ref{1+nshapeprop}, $A \cong 1\A \oplus \lla b,c\rra$.  Since $n$ is odd, $\lla b,c \rra$ is a symmetric $2$-generated algebra and so, by the results in Section \ref{sec:2genalbt}, it is one of the algebras listed.

For $\sg_a \neq 1$, we use the case distinction in Proposition \ref{1+nshapeprop}.  Suppose we are in case 2(a).  Then, $A=B$ is a symmetric $2$-generated algebra which has property (J).  Again by the results in Section \ref{sec:2genalbt}, these are the algebras listed.

We now turn to the two exceptional situations 2(b) and 2(c).  The algebra $B$ is a symmetric $2$-generated algebra with axet $X(2n)$.  By Section \ref{sec:2genalbt}, the possibilities are a quotient of one of $S(\dl)$ with $\dl \in N(2n)$, $\cH_{2n}$, $\hatH_{2n}$, $6\A(\al, \tfrac{\al^2}{4(1-2\al)})$, $6\J(2\bt, \bt)$, $6\Y(\frac{1}{2}, 2)$, or $\IY_3(\al, \frac{1}{2}, \mu)$ where $2\mu \in N(2n)$.  Note that $\al = \frac{1}{2}$ and so $B$ cannot also have $\bt = \frac{1}{2}$.  This leaves $6\A(\al, \tfrac{\al^2}{4(1-2\al)})$, $6\J(2\bt,\bt)$, or $6\Y(\frac{1}{2}, 2)$, or one of their quotients.  In these, $\lla b,b'\rra$ is isomorphic to $3\C(\al)$, $3\C(\al)$ and $\Cl$, respectively.  Since $2\B$ is not a quotient of these, case 2(c) where $\lla b,b'\rra \cong 2\B$ does not occur.

For case 2(b), $\lla b, b' \rra \cong S(2)^\circ$, the only choice for $B$ is $6\Y(\frac{1}{2}, 2)^\times$ and so $\bt=2$.  Since $B \cong 6\Y(\frac{1}{2}, 2)^\times$ is $4$-dimensional and spanned by its six axes and $A_Y \cong S(-2)$, we know the multiplication of $a$ with $B$ and hence $A$ is $5$-dimensional.  This is precisely the algebra $\mathrm{Bar}_{0,1}(\frac{1}{2}, 2)$.
\end{proof}

\begin{remark}
The algebra $\mathrm{Bar}_{0,1}(\frac{1}{2}, 2)$ in case 2(b) is one of a family of algebras $\mathrm{Bar}_{i,j}(\frac{1}{2}, 2)$ which are baric \cite{baric}.  That is, there is an algebra homomorphism $w \colon A \to \FF$ called a \emph{weight function}.  In fact, we discovered this family of baric algebras by proving this theorem.  The algebra $\mathrm{Bar}_{0,1}(\frac{1}{2}, 2)$ in the above theorem is $5$-dimensional, cannot be generated by two axes, and contains $6\Y(\frac{1}{2}, 2)^\times \cong \mathrm{Bar}_{1,0}(\frac{1}{2}, 2)$ as a codimension $1$ subalgebra.  We note that we discovered $6\Y(\frac{1}{2}, 2)^\times \cong \mathrm{Bar}_{1,0}(\frac{1}{2}, 2)$ in this way independently to Yabe.  
\end{remark}

\begin{remark}
Note that a $2$-generated algebra in case 2(a) of the above theorem is not generated by two of the $1+n$ axes given.  In this case, $\sg_a \neq 1$ and $Z$ and $Z^{\sg_a}$ are disjoint.  The algebra is then generated by, for example, $b^{\sg_a}$ and $c$.
\end{remark}

\begin{remark}
The above theorem implies that the vast majority of shapes on the axet $X_1(1+n)$ collapse.  In particular, if $A_Y \in \cJ^2(\alpha) -\{ 2\B\}$ and $n \neq 3$, then all the shapes on $X_1(1+n)$ collapse unless the subalgebras for $A_Y$ and $A_Z$ are (quotients of) those found in the completions listed.  Explicitly, these shapes are:
\begin{enumerate}[leftmargin=1.5cm]
\item[(a)(i)] If $A \cong 6\A(\al, \tfrac{\al^2}{4(1-2\al)})$, then $A_Y \cong 3\C(\al)$ and $A_Z \cong 3\A(\alpha, \tfrac{\al^2}{4(1-2\al)})$.
\item[(a)(ii)] If $A \cong 6\J(2\bt,\bt)$, then $A_Y \cong 3\C(2\bt)$ and $A_Z \cong 3\C(\bt)$.
\item[(a)(iii)] If $A \cong \IY_3(\al, \tfrac{1}{2}, \mu)$, then $A_Y \cong 3\C(\al)$ and $A_Z \cong \IY_3(\al, \tfrac{1}{2}, -\mu)$.
\item[(a)(iv)] If $A \cong \cH_{2n}^J$, then $A_Y \cong 3\C(2)$ and $A_Z \cong \cH_n^J$ (and $(\al,\bt) = (2, \frac{1}{2})$).
\item[(a)(v)] If $A \cong \hatH_{2n}^J$, then $A_Y \cong 3\C(2)$ and $A_Z \cong \hatH_n^J$ (and $(\al,\bt) = (2, \frac{1}{2})$).
\item[(b)] If $A \cong \mathrm{Bar}_{0,1}(\frac{1}{2}, 2)$, then $A_Y \cong S(-2)$, $A_Z \cong 3\C(2)$.
\end{enumerate}
\end{remark}

Let us see what happens in the only case, $\cM(\frac{1}{4}, \frac{1}{32})$, where we do know all the $2$-generated algebras.  These are the Norton-Sakuma algebras \cite{IPSS}.  Since $n \geq 3$ is odd, it is either $3$, or $5$ and so in the shape $\Theta$, $\lla b,c \rra \cong 3\A, 3\C, 5\A$ and $\lla a, b \rra \cong 2\A, 2\B$.  Hence the only possible shapes are $3\A2\B$, $3\C2\B$, $5\A2\B$, $3\A2\A$, $3\C2\A$, or $5\A2\A$.  The completions of the first three are direct sum algebras in case (1) of Theorem \ref{1+nshape}.

\begin{corollary}\label{no3C2A}\textup{\cite[Proposition 5.2 and Corollary 5.3]{3gen4trans}}
\begin{enumerate}
\item The shape $3\A2\A$ has the unique completion $6\A$.
\item The shapes $3\C2\A$ and $5\A 2\A$ collapse.
\end{enumerate}
\end{corollary}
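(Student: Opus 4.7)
The plan is to specialise Theorem \ref{1+nshape} to the parameters $(\al,\bt) = (\tfrac{1}{4}, \tfrac{1}{32})$ and to observe that essentially every option listed in case 2 of that theorem is immediately eliminated by incompatible parameter constraints, leaving only $6\A(\tfrac{1}{4},\tfrac{1}{32})$. Since $2\A$ is the Norton--Sakuma algebra of Jordan type $\tfrac{1}{4}$, we have $A_Y \cong 2\A \in \cJ^2(\tfrac{1}{4})-\{2\B\}$ in all three shapes, so $\sg_a \neq 1$ by Proposition \ref{1+nshapeprop} and we are squarely in case 2 of Theorem \ref{1+nshape}.

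The main step is to walk through cases 2(a)(i)--(v) and 2(b) with $\al=\tfrac{1}{4}$, $\bt=\tfrac{1}{32}$. Case 2(a)(i) requires $\bt=\tfrac{\al^2}{4(1-2\al)}$; substituting $\al=\tfrac{1}{4}$ gives $\tfrac{1/16}{2}=\tfrac{1}{32}=\bt$, so $6\A(\tfrac{1}{4},\tfrac{1}{32})$ genuinely completes a shape on $X_1(1+3)$, and inspection of Table \ref{tab:multX6} identifies $A_Y \cong 3\C(\tfrac{1}{4}) \cong 2\A$ and $A_Z \cong 3\A(\tfrac{1}{4},\tfrac{1}{32}) \cong 3\A$. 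Case 2(a)(ii) needs $\al=2\bt$, i.e.\ $\al=\tfrac{1}{16}$, incompatible with $\al=\tfrac{1}{4}$; cases 2(a)(iii),(iv),(v) and 2(b) force $\bt=\tfrac{1}{2}$ or $(\al,\bt)=(2,\tfrac{1}{2})$ or $(\al,\bt)=(\tfrac{1}{2},2)$, all incompatible with our fusion law. Thus the \emph{only} completion available is $A \cong 6\A$ with $n=3$.

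Reading off $A_Y$ and $A_Z$ in that completion yields the conclusion. For the shape $3\A 2\A$ (with $n=3$), the algebras $A_Y \cong 2\A$ and $A_Z \cong 3\A$ match the $6\A$ completion, and this completion is unique since no other case applies; hence $6\A$ is the unique completion. For $3\C 2\A$, the required $A_Z \cong 3\C \cong 3\C(\tfrac{1}{32})$ does not agree with the $3\A$ arising in $6\A$, so no completion exists and the shape collapses. For $5\A 2\A$, the shape lives on $X_1(1+5)$ which requires $n=5$ and an axet of size $2n=10$ in $A=B$; but $6\A$ has axet $X(6)$ corresponding to $n=3$, so again no completion exists and the shape collapses.

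There is no genuine obstacle here once Theorem \ref{1+nshape} is in hand; the only care required is in matching the Norton--Sakuma names to the generic family names, namely $2\A \cong 3\C(\tfrac{1}{4})$, $3\A \cong 3\A(\tfrac{1}{4},\tfrac{1}{32})$, $3\C \cong 3\C(\tfrac{1}{32})$, $5\A \cong 5\A(\tfrac{1}{4},\tfrac{1}{32})$, $6\A \cong 6\A(\tfrac{1}{4},\tfrac{1}{32})$, and in verifying the single parameter identity $\tfrac{\al^2}{4(1-2\al)}=\bt$ that makes case 2(a)(i) available.
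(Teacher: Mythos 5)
Your proposal is correct and follows essentially the same route as the paper: reduce via Theorem \ref{1+nshape} (noting $A_Y \cong 2\A \not\cong 2\B$ forces case 2), identify $6\A$ as the only surviving completion for $(\al,\bt)=(\tfrac{1}{4},\tfrac{1}{32})$, and then compare $A_Z$ and the axet size to dispose of $3\C2\A$ and $5\A2\A$. The only cosmetic difference is that you eliminate the other families by explicit parameter checks where the paper appeals to inspection of the Norton--Sakuma list; both verifications are sound.
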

\begin{proof}
By Theorem \ref{1+nshape}, such a completion for any of these shapes would be $2$-generated with axet $X(2n)$, $n \geq 3$ odd.  By inspection, $6\A$ is the only such algebra and $\lla b, c \rra$ is isomorphic to $3\A$ and not $3\C$, or $5\A$.
\end{proof}

\end{document}